\documentclass[10pt]{amsart}
\usepackage[utf8]{inputenc}
\usepackage[T1]{fontenc}
\usepackage[square,sort,comma,numbers]{natbib}
\usepackage{graphicx}
\usepackage{amsmath}
\usepackage{amssymb}
\usepackage{bm}
\usepackage{amsfonts}
\usepackage{mathtools}
\usepackage{enumitem}
\usepackage{tikz}
\usepackage[a4paper, margin=1in, footskip=0.25in]{geometry}
\usepackage{amsthm}
\usepackage{xcolor}
\usepackage{mathrsfs}
\usepackage[hidelinks]{hyperref}
\newtheorem{theorem}{Theorem}[section]
\newtheorem{corollary}{Corollary}[theorem]
\newtheorem{lemma}[theorem]{Lemma}
\newtheorem{proposition}{Proposition}[section]

\theoremstyle{definition}
\newtheorem{definition}{Definition}[section]
\newtheorem{assumption}{Assumptions}[section]

\newtheorem{remark}{Remark}[section]

\newcommand{\bluepassage}[1]{{#1}}
\newcommand{\redpassage}[1]{}

\newcommand{\NN}{\mathbb{N}}

\newcommand{\EE}{\mathbb{E}}
\newcommand{\PP}{\mathbb{P}}
\newcommand{\PC}{\mathcal{P}}
\newcommand{\FF}{\mathcal{F}}
\newcommand{\QQ}{\mathbb{Q}}

\newcommand{\AAC}{\mathcal{A}}

\newcommand{\nois}{B}

\newcommand{\RR}{\mathbb{R}}

\newcommand{\absv}[1]{\left| #1 \right|}

\newcommand{\mean}[1]{\langle #1 \rangle}

\newcommand{\inv}[1]{\frac{1}{#1}}

\newcommand{\norm}[1]{\left\Vert #1 \right\Vert}

\newcommand{\varst}[2]{\norm{#1}_{#2,[s,t]}}

\newcommand{\indic}[1]{\mathbf{1}_{[#1]}}

\newcommand{\qvarst}[1]{\varst{#1}{q}}

\newcommand{\qvart}[1]{\norm{#1}_{q, [0,t]}}

\newcommand{\wh}{W^H}

\newcommand{\ls}{\lesssim}

\newcommand{\eps}{\varepsilon}
\newcommand{\dqivar}[1]{\norm{ \delta_i #1 }_q}
\newcommand{\dqvar}[2]{\norm{ \delta_{#2} #1 }_q}

\newcommand{\ccc}[1]{\mathcal{C}^{#1}}
\newcommand{\vvv}[1]{\ccc{#1-var}}
\newcommand{\ccdv}{\vvv{1/\delta}}

\newcommand{\absvv}[1]{\norm{#1}_{L^{\infty}}}

\newcommand{\fsp}{L_t^p\ccc{\alpha}_x}
\newcommand{\nofn}{\norm{ f}_{L_t^p\ccc{\alpha}_x}}

\newcommand{\ccax}{\ccc{\alpha}_x}

\renewcommand{\qvarst}[1]{\varst{#1}{q-var}}
\newcommand{\qvarstCx}[1]{\varst{#1}{q-var(\ccc{1}_x)}}

\renewcommand{\qvart}[1]{\norm{#1}_{q-var, [0,t]}}
\newcommand{\qvartCx}[1]{\norm{#1}_{q-var(\ccc{1}_x),[0,t]}}
\newcommand{\cH}{\mathcal{H}^{H},T}

\begin{document}

\title{Perturbations of singular fractional SDEs}

\author{Paul Gassiat}
\address{ Universit\'e Paris-Dauphine, PSL University, UMR 7534, CNRS, CEREMADE, 75016 Paris, France}
\email{gassiat@ceremade.dauphine.fr}

\author{Łukasz Mądry}

\address{ Universit\'e Paris-Dauphine, PSL University, UMR 7534, CNRS, CEREMADE, 75016 Paris, France}
\email{madry@ceremade.dauphine.fr}

\maketitle {\noindent
\small

}

\begin{abstract}
We obtain well-posedness results for a class of ODE with a singular drift and additive fractional noise, whose right-hand-side involves some bounded variation terms depending on the solution. Examples of such equations are reflected equations, where the solution is constrained to remain in a rectangular domain, as well as so-called perturbed equations, where the dynamics depend on the running extrema of the solution. Our proof is based on combining the Catellier-Gubinelli approach based on Young nonlinear integration, with some Lipschitz estimates in $p$-variation for maps of Skorokhod type, due to Falkowski and Słominski. An important step requires to prove that fractional Brownian motion, when perturbed by sufficiently regular paths (in the sense of $p$-variation), retains its regularization properties. This is done by applying a variant of the stochastic sewing lemma.
\end{abstract}

\section{Introduction}

Regularization by noise of ODEs is by now a thoroughly studied subject. The main observation is that, even though classical ODE theory requires $f=f(t,x)$ to be Lipschitz w.r.t. $x$ for well-posedness of $\dot{x}(t) = f(t,x)$ to hold, one can decrease this regularity requirement by adding an (irregular) noise term to the equation, namely by considering equations of the form
\begin{equation} \label{eq:SDEintro}
dX_t = f(t,X_t) dt +  dW_t. 
\end{equation}
Indeed, in the case where $W$ is a Brownian motion, well-posedness of the above only requires $f$ to be bounded, as noted in the classical works of Zvonkin and Veretennikov \cite{Zvo74,Ver81}. More recently, in the seminal work \cite{CG16}, Catellier and Gubinelli have observed that the results could be extended to fractional Brownian paths with Hurst index $H \in (0,1)$ (extending to arbitrary dimension results obtained by Nualart-Ouknine \cite{NO02} in the scalar case). An important feature of their results is that the vector field $f$ can be chosen arbitrarily irregular, provided that $H$ is small enough (namely, that the added noise is sufficiently irregular).
\smallskip

Following \cite{CG16}, regularization by noise for fractional SDE has been studied in recent years by many authors and in various contexts, see for instance \cite{GG20, HP20, GHM21, CDR22}. It has also been remarked that the so-called stochastic sewing lemma, which was formalized by L\^e \cite{Le20}, is a very useful technical tool to obtain this kind of result, see for instance \cite{BDG19,Ger22,ABLM21, ART21}.
\smallskip

In the present work, we extend the Catellier-Gubinelli framework to deal with ODEs incorporating some path-dependent terms which are sufficiently regular (in the $q$-variation scale). For concreteness, we present here two families of equations for which we obtain well-posedness (in both cases the perturbations are of bounded variation).

The first one concerns \emph{reflected equations}, namely equations where the solution is constrained to take value in $D = \Pi_{i=1}^d [a_i, b_i]$ for some $a_i < b_i \in [-\infty, \infty]$. These can be written as
\begin{equation} \label{eq:Rsde1}
X_t = x_0 + \int_0^t f(s,X_s) ds + W_t + K_t, 
\end{equation}
where the unknown is the pair $(X,K)$ which must satisfy the additional constraint
\begin{equation} \label{eq:Rsde2}
 \forall t \geq 0, X_t \in D, \;\;\;\mbox{ and for }i=1, \ldots, d, \;\; dK^i_t =dK^{+,i}_t  1_{\{X^i_t = a_i\}} - dK^{-,i}_t  1_{\{X^i_t = b_i\}} \end{equation}
where the $K^{\pm,i}$ are non-decreasing.

The second example of equations concerns so-called \emph{perturbed equations}, which can now be written component-wise as
\begin{equation} \label{eq:Pert}
X_t^i = x_0^i + \int_0^t f^i(s,X_s) ds + W_t^i +  \alpha_i \max_{0 \leq s \leq t} X^i_s +  \beta_i \min_{0 \leq s \leq t} X^i_s.
\end{equation}

Here the constants $\alpha_i$, $\beta_i$ satisfy
\begin{equation*}
\forall i=1,\ldots, d, \;\; \alpha_i < 1, \; \beta_i < 1, \;\; \frac{| \alpha_i \beta_i|}{(1-\alpha_i)(1- \beta_i)} < 1.
\end{equation*}

Our main result is then the following theorem (we refer to Theorem \ref{thm:main} below for a more precise statement which in particular allows for $t$-dependent $f$).

\begin{theorem} \label{thm:intro}
Let $W$ be a fractional Brownian motion of Hurst index $H \in (0,1)$. Assume that $ f=f(x)\in C^\alpha$, where
\[\alpha > \left( 1 - \frac{1}{2H} \right) \vee \left( 2 - \frac{1}{H} \right). \]
Then path-by-path uniqueness, strong existence and stability estimates hold for both reflected equations \eqref{eq:Rsde1}-\eqref{eq:Rsde2} and perturbed equations \eqref{eq:Pert}.
\end{theorem}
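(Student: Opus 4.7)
The strategy is to rewrite both \eqref{eq:Rsde1}--\eqref{eq:Rsde2} and \eqref{eq:Pert} in the unified form
\[X = \Gamma\Bigl( x_0 + \int_0^{\cdot} f(s,X_s)\,ds + W \Bigr),\]
where $\Gamma$ is the deterministic Skorokhod map on the rectangle $D$ in the reflected case, and the Chaumont--Doney-type perturbation map in the perturbed case. In both settings the Falkowski--S{\l}ominski estimates alluded to in the abstract supply Lipschitz bounds for $\Gamma$ on $\vvv{q}$ for any $q \geq 1$. Writing $X = W + \phi$ with $\phi = \Gamma(\cdots) - W$, one sees that $\phi$ is of bounded variation (the Skorokhod/perturbation correction is monotone in each coordinate, and $\int f\,ds$ is BV), so the whole problem reduces to solving for $\phi$ as the fixed point of a nonlinear Young-type equation.

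Following Catellier--Gubinelli, I would introduce the averaging field $T^W_t f(x) = \int_0^t f(x + W_s)\,ds$. For smooth $\phi$ one has $\int_0^t f(\phi_s + W_s)\,ds = \int_0^t (T^W f)(ds, \phi_s)$, which as a nonlinear Young integral continues to make sense provided $T^W f \in \ccc{\gamma}_t \ccc{1+\alpha'}_x$ with $\gamma + \alpha'/p > 1$ for $\phi \in \vvv{p}$. Stochastic sewing applied to the germ $A_{s,t}(x) = \EE_s\bigl[ \int_s^t f(x + W_u)\,du \bigr]$ then yields the required space-time regularity under the classical threshold $\alpha > 1 - \tfrac{1}{2H}$, which accounts for the first condition in the hypothesis (binding in the regime $H \leq 1/2$).

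The novel and, I expect, most delicate step is to establish the analogous bound for the \emph{shifted} averaging field $T^{W+\psi} f$ where $\psi$ is itself adapted to the noise through its implicit dependence on the unknown solution. The natural approach is to apply a variant of L\^e's stochastic sewing lemma to the germ
\[A^{\psi}_{s,t}(x) = \EE_s\Bigl[ \int_s^t \bigl( f(x + W_u + \psi_u) - f(x + W_u + \psi_s) \bigr) du \Bigr],\]
exploiting that the conditional increment $W_u - \EE_s[W_u]$ retains a Gaussian density with local variance of order $(u-s)^{2H}$, while the fluctuation $\psi_u - \psi_s$ is controlled by $\norm{\psi}_{p-var, [s,t]}$. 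A careful accounting of how the $\psi$-increment enters the modulus of continuity of $f$ after convolution with this Gaussian density produces the CG-type bound plus an error in $p$-variation of $\psi$, and shows that the admissible exponents are exactly those satisfying the second threshold $\alpha > 2 - \tfrac{1}{H}$ (which dominates in the regime $H > 1/2$).

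Given these two a priori estimates, the remainder of the argument is fairly standard: regularize $f$ to $f^\eps \in C^\infty$, solve the classical Lipschitz equation pathwise, and pass to the limit using the uniform $\vvv{p}$ bounds on $\phi^\eps$, which decouple from the smoothing parameter thanks to the averaging. Path-by-path uniqueness and the stability estimate follow from a Bihari--Gronwall argument in $\vvv{p}$, combining the Lipschitz continuity of $\Gamma$ with a nonlinear Young difference bound on $\int (T^W f)(d\phi^1) - \int (T^W f)(d\phi^2)$ in terms of $\norm{\phi^1 - \phi^2}_{p-var}$. The entire construction holds on the event of full measure on which $T^W f$ enjoys the regularity asserted by the stochastic sewing step, which is why the result is indeed path-by-path.
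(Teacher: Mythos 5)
Your overall architecture is the same as the paper's (recast both equations as a fixed point involving $\Gamma$, use the Falkowski--S{\l}ominski Lipschitz estimates in $p$-variation, and prove a stochastic-sewing estimate for an averaged field along a perturbed fBm), but two essential ingredients are missing, and without them the argument does not close. First, you never invoke a change of measure, yet the sewing estimate for $T^{W+\psi}f$ cannot be applied with $\psi$ ``adapted through its implicit dependence on the unknown solution'': the perturbation fed into the sewing lemma must be a concrete adapted process with an adapted variation control, and candidate solutions in the path-by-path setting need not even be adapted. The paper applies Theorem \ref{thm:continuity} only with $\phi = \Gamma(W^H)-W^H$, and transfers the conclusion to actual solutions via Girsanov; this requires exponential integrability of the Cameron--Martin norm of the drift (Lemma \ref{lem:girsanov_applicable}), which in turn rests on exponential integrability of $\norm{\Gamma(W^H)-W^H}_{1-var}$ in the sense of \eqref{eq:asnGammaW}. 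For reflection in a bounded box this quantity does \emph{not} have Gaussian tails: only its power $1+2H$ is exponentially integrable (Proposition \ref{thm:expintegrbox}, and that exponent is sharp), which is precisely why Theorem \ref{thm:continuity} carries the extra parameter $\eta$ trading spatial regularity against integrability. None of this appears in your outline, and without it neither the Girsanov step nor the thresholds you claim can be justified.

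Second, the uniqueness and stability step is not correct as stated. At the threshold $\alpha > 1-\tfrac{1}{2H}$ the field $T^{W}f$ is only (barely better than) Lipschitz in $x$, which is not enough for the difference bound on $\int T^{W}f(d\phi^1)-\int T^{W}f(d\phi^2)$ in terms of $\norm{\phi^1-\phi^2}_{p-var}$ that your Bihari--Gronwall argument needs; that direct route is exactly the preliminary proposition of Section \ref{sec:FixedPointEqs} and costs the stronger condition $\alpha > 2-\tfrac{1}{2H}+\tfrac{1}{pH}$. The paper instead compares an arbitrary (possibly non-adapted) competitor with the constructed solution $x$ by the shift identity of Lemma \ref{lem:equiv}, rewriting both drift integrals through the averaged field $T^{x}f$ \emph{along the solution}, whose $\ccc{1}_x$ regularity is exactly what Girsanov plus Theorem \ref{thm:continuity} provide, and then closes with the $p$-variation Gronwall estimate of Lemma \ref{lem:stabest}. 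Consequently the full-measure event must encode regularity of $T^{x}f$ for a solution $x$, not merely of $T^{W}f$ as in your last paragraph. The stability estimate (from which strong existence is deduced) likewise requires repeated measure changes with moment bounds on the Radon--Nikodym densities, again resting on the integrability inputs above; your compactness argument for regularized solutions presupposes uniform bounds that are obtained only through this machinery.
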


We remark that when $H< \frac{1}{2}$, the regularity requirement on $f$ coincides with that obtained in \cite{CG16}, which is believed to be optimal. However when $H>\frac 1 2$, we require more regularity, and we do not believe our statement to be optimal (but this still allows for some non-Lipschitz vector fields, for any $H<1$).

\bluepassage{Path-by-path uniqueness (which is precisely defined in Definition \ref{def:solution} below) means uniqueness for (almost every) fixed $\omega$, among all solutions for which the equation makes sense, not necessarily those given by adapted processes as in the more classical (but typically weaker) notion of strong (or pathwise) uniqueness. This type of uniqueness was first obtained in \cite{Dav07} in a regularization by noise context, see also the discussion in Ch. 1 of \cite{Fla11}, and the recent work \cite{SW22} for an example where uniqueness holds in the strong sense but not path-by-path.
}

\smallskip

Let us now explain the idea of the proof of Theorem \ref{thm:intro}. The key point is that, even though the additional terms on the r.h.s. of \eqref{eq:Rsde1} and \eqref{eq:Pert} can not be expressed as functions (or even Schwartz distributions) of $X_t$ as in \eqref{eq:SDEintro}, we can nevertheless recast both the reflected and perturbed equations as fixed points
 \begin{equation} \label{eq:FPintro}
 X = \Gamma \left( x_0 + \int_0^{\cdot} f(s,X_s) ds + W \right), 
 \end{equation}
and the map $\Gamma$ (which in the case of reflected equations is the well-known Skorokhod map) is in fact Lipschitz continuous in any $p$-variation spaces, as follows from results of Falkowski and Słominski \cite{FS15,FS22}. This turns out to be pleasantly compatible with the approach of Catellier-Gubinelli \cite{CG16}, in which $\int_0^t f(s,X_s) ds$ is interpreted as a (nonlinear) Young integral. 

\smallskip

Recall that the main step in obtaining path-by-path uniqueness and stability, following respectively \cite{CG16,GHM21}, is a linearization procedure around a given solution $X$, which has to be combined with the observation that
 \[ T^X f(t,x) := \int_0^t f(s, X_s +x) ds \]
 is typically much more regular than $f$ itself. To obtain that fact, \cite{CG16} (inspired by \cite{Dav07}) relied on an application of Girsanov's theorem, by which it was enough to obtain this regularity for the path $W$ itself. However, in our setting, the solution $X$ will be, after a Girsanov transformation, equivalent not to $W$ but to $\Gamma(W)$, so that we need to check that the latter still has the same regularizing properties as the former (even though they are not equivalent via a Girsanov transformation). This is achieved by noting that $\Gamma(W) = W + C(W)$, where $C(W)$ is an adapted process of bounded variation. We then obtain, using a stochastic sewing procedure, that this fact is sufficient to obtain the same regularization effect for $\Gamma(W)$ as that of $W$ (at least when $H< 1/2$). 
\smallskip

\smallskip

We conclude this introduction by a few words on reflected and perturbed equations. Reflected diffusions, historically first defined by Skorokhod \cite{Sko61}, arise naturally when considering processes constrained to remain in a subdomain of $\RR^d$. The case of classical Brownian SDE has been studied by numerous authors, starting with Tanaka \cite{Tan79} and Lions-Sznitman \cite{LS84}. In the case of singular drifts as considered here, well-posedness for bounded $b$ was obtained in \cite{Zha94} in the scalar case and only recently in \cite{YZ20} in the multi-dimensional case. Concerning reflected equations driven by fractional Brownian motions with Hurst index $H$, for general multiplicative noise, well-posedness has been obtained in the case $H>\frac{1}{2}$ by Falkowski and Słominski \cite{FS15,FS22}, whereas for $H< \frac{1}{2}$ the situation is more subtle, since existence holds \cite{Aid15} but uniqueness only holds in the scalar case \cite{DGHT19} and not in higher dimension \cite{Gas21}. In the case of additive noise considered in the present paper however, there are no such difficulties and well-posedness holds for any Lipschitz $f$ and arbitrary path $W$ (see e.g. \cite{CMR22}).  

Perturbed equations of the form \eqref{eq:Pert} allow naturally to give simple and tractable examples of continuous processes which interact with their past,  and as such have been studied by several authors, initially in the case of perturbed Brownian motions (i.e. $f \equiv 0$, $H=1/2$). The condition on $\alpha_i$, $\beta_i$ above was introduced by \cite{CPY98}, who showed that this allowed to set up the equation as a contraction on path-space, approach that we follow here. Sharpness of this condition has been studied by \cite{Dav96} , while \cite{CD99} show that $\alpha, \beta<1$ is sufficient for strong uniqueness when dealing with Brownian paths. Perturbed SDE (driven by classical Brownian motion) have then been considered (sometimes in combination with reflection) by a number of authors, see e.g. \cite{DZ05, YZ15}, and in particular \cite{BHO09} which obtain some well-posedness results for equations with singular drits (in the scalar case).

\vspace{4mm}

The rest of the article is organized as follows. In Section \ref{sec:Prem}, we state preliminary facts about fractional Brownian motion (fBm) and (nonlinear) Young integration. In Section \ref{sec:avgpert}, we obtain some regularity estimates on averaged fields for fBm perturbed by an adapted path of finite $q$-variation. In Section \ref{sec:FixedPointEqs}, we state and prove our main results on fixed point equations of the form \eqref{eq:FPintro}. In Section \ref{sec:RefPert}, we check that both reflected and perturbed equations fall in the framework of the previous section. Finally, some of the arguments and computations are relegated to an Appendix. 

\section{Preliminaries} \label{sec:Prem}

\subsection{Notations}

Throughout the paper, the time horizon $T>0$ will be fixed.

For a $V$-valued path $X \in \mathcal{C}([0,T],V)$ we define $X_{st} = X_t - X_s$. Let $\Delta_2$ be the 2-dimensional simplex, that is $\Delta_2 = \{ (s,t) : 0 \leq s \leq t \leq T \}$. Then for $X: \Delta_2 \mapsto V$, denoted for $(s,t) \in \Delta_2$ as $X_{st}$ we define $\delta$ as $\delta_u X_{st} = X_{st} - X_{su} - X_{ut}$ for $s < u < t$. In the second case, that is where $X_{st}$ is to be understood as a two-parameters map rather than an increment of a path, we will frequently call these objects "germs". If $\delta_u X_{st} = 0$, then $X$ is a path.

We will use the standard variation and H\"older spaces, with seminorms defined as follows, where $\sup_{\PC}$ means that supremum runs over possible partitions of $[0,T]$:

$$\qvart{X } = \left( \sup_{\PC} \sum_{[u,v] \in \PC} \absv{ X_{uv} }^q \right)^{1/q} < \infty \quad \norm{ X}_{\alpha;[0,t]} = \sup_{s \neq t} \frac{ \absv{ X_{st} }}{{\absv{t-s}^{\alpha}} } $$
 In general, we will denote variation norm using Latin letters and H\"older norms with Greek ones. We will denote variation spaces by $\vvv{q}$ and H\"older spaces by $\ccc{\alpha}$. In particular, the H\"older constant of a $V$-valued path $X$ is denoted by $\norm{ X }_{\ccc{\gamma}_tV_x}$.


Given a $V$-valued random variable on a probability space equipped with a filtration $(\FF_t)_{t\geq0}$, we let
$$\norm{ X }_{\EE;m;V} = \big( \EE \absv{ X }^m_V \big)^{1/m} \quad \EE_s X = \EE X \lvert \FF_s$$

\bluepassage{The notation $\EE_s$ will always mean conditional expectation with respect to the filtration of the underlying Brownian motion $\nois$ defined in subsection \ref{subsect:fbm} below- if there arises a need to specify different filtration $\mathcal{G}_s$, we will use $\EE_{\mathcal{G}_s}$. } By saying that a map $w: \Delta_2(s,t) \mapsto \RR_+$ is a control or a control function we mean that for all $s < u < t$ there is \[ w(s,u) + w(u,t) \leq w(s,t) \quad w(s,t) \to 0\ \text{for}\ t \to s. \] Sometimes we can say that this is a superadditive map, or a superadditive control. A random control is a map for which its superadditivity property holds for almost every $\omega \in \Omega$
\\ 
 We will denote $\ccc{\alpha} = B^{\alpha}_{\infty,\infty}$ the Besov-H\"older spaces for $\alpha \in \RR \setminus \NN$, see \cite{BCD11}, Definition 2.68. \bluepassage{Additionally, for $k \geq 1$, possibly $k = \infty$ the notation $f \in \ccc{k,b}$ will mean that all derivatives up to $k$-th one are bounded}

If $E$ is a function space then by $f \in \ccc{\gamma}_tE_x$ we will mean that $f \in \ccc{\gamma}([0,T]; E)$, \bluepassage{with the } subscript $E_x$ underlining the fact of spatial dependence. We use analogous notation for $f \in \vvv{q}_tE_x$, \bluepassage{ and we will denote the corresponding seminorm by $\norm{f}_{q-var(E_x),[0,t]}$}. If additionally $f$ is random and $\EE \left[ \norm{ f}_{\ccc{\gamma}_tE_x} \right]^k < \infty$ for some $k \geq 1$, we will say $f \in L^k\ccc{\gamma}_tE_x$, same for $L^k\vvv{q}_tE_x$.

Moreover, we will denote with $\mathcal{S}(\RR^d)$ and $\mathcal{S}'(\RR^d)$ respectively spaces of Schwarz functions and tempered distributions.

Finally, we will write $a \lesssim b$ when $a \leq C b$ for some constant $C$.

\subsection{Fractional Brownian motion}\label{subsect:fbm}


A fractional Brownian motion of Hurst index $H \in (0,1)$, is a centered Gaussian process $(\wh_t)_{t\geq0}$ with $\wh_0 = 0$ such that\redpassage{, for some constant $c_H> 0$}:

\begin{equation}\label{eq:fbm}
\forall\;s<t,\;\;\EE \absv{ \wh_t - \wh_s }^2 = \absv{t-s}^{2H}
\end{equation}

In fact, it can be obtained from a standard Brownian motion $\bar{\nois}$ by the formula
\begin{equation}\label{eq:rlfbm}
\wh_t = \int_0^t K^H(t,s) d\bar{\nois}_s,
\end{equation}
for some suitable kernel $K^H$ (which we will not need to specify). Moreover, $\wh$ and $\bar{\nois}$ can be chosen such that they generate the same filtration $(\bar{\FF}_t)_{t\geq0}$, see \cite{Nua06intro}, page 280.

The Cameron-Martin space of $\wh$ on $[0,T]$ is then defined by:
\begin{equation}\label{eq:cameronmartin}
\mathcal{H}^{H,T} = \{ h: h(t) = \int_0^t  K^H(t,s) f(s) ds, \mbox{ for some } f \in L^2([0,T]) \}
\end{equation}
equipped with the norm $\norm{ h }_{\mathcal{H}^{H,T}} =  \norm{ f }_{L^2}$.
%
%

\begin{theorem}\label{thm:girsanov} Let $\wh$ be an fBm under $\PP$, with $(\bar{\FF}_t)_{0\leq t\leq T}$ its natural filtration. Let $h$ be an $\bar{\FF}$- adapted process such that $\EE \exp( \lambda \norm{ h }_{\mathcal{H},T}^2 ) \leq K(\lambda)< \infty$ for all $\lambda > 0$, and a certain function $K$. Then there exists a measure $\QQ \sim \PP$ under which $\wh - h$ is a fBm . For the changes of measure $\frac{d\PP}{d\QQ}, \frac{d\QQ}{d\PP}$ we furthermore have:

\[ \EE \left[ \left(\frac{d\PP}{d\QQ}\right)^n + \left( \frac{d\QQ}{d\PP} \right)^n \right]^{1/n} \leq C_n < \infty\;\; \forall\ n \in \NN \]
where $C_n$ only depends on $h$ through the function $K$.
\end{theorem}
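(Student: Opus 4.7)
My plan is to reduce the statement to the classical Cameron-Martin-Girsanov theorem for the underlying Brownian motion $\bar{\nois}$, by exploiting the kernel representations \eqref{eq:rlfbm} and \eqref{eq:cameronmartin} that link $\wh$ and $h$ to standard objects. Since $h$ takes values almost surely in $\mathcal{H}^{H,T}$, we may write $h_t = \int_0^t K^H(t,s) f_s\, ds$ for some process $f$, with $\norm{h}_{\mathcal{H}^{H,T}}^2 = \int_0^T f_s^2\, ds$ by the isometry. Because the Volterra operator $f \mapsto \int_0^\cdot K^H(\cdot,s) f_s\, ds$ and its inverse are both causal (a fact we borrow from \cite{Nua06intro}), the $\bar{\FF}$-adaptedness of $h$ transfers to $f$. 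In these terms, the hypothesis becomes the strong exponential integrability
\[
\EE \exp\!\left( \lambda \int_0^T f_s^2\, ds \right) \le K(\lambda), \qquad \forall\, \lambda > 0,
\]
which in particular implies Novikov's condition at every level.

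Classical Girsanov therefore yields a measure $\QQ \sim \PP$ with density
\[
Z := \frac{d\QQ}{d\PP} = \exp\!\left( \int_0^T f_s\, d\bar{\nois}_s - \tfrac{1}{2} \int_0^T f_s^2\, ds \right),
\]
under which $\tilde{\nois}_t := \bar{\nois}_t - \int_0^t f_s\, ds$ is a standard Brownian motion on $[0,T]$. Since $K^H(t,\cdot)$ is deterministic and $f$ is adapted, for every $t$ one has
\[
\wh_t - h_t = \int_0^t K^H(t,s)\, d\bar{\nois}_s - \int_0^t K^H(t,s) f_s\, ds = \int_0^t K^H(t,s)\, d\tilde{\nois}_s,
\]
so by \eqref{eq:rlfbm} applied to $\tilde{\nois}$, the process $\wh - h$ is a fractional Brownian motion of Hurst parameter $H$ under $\QQ$.

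For the moment bounds, fix $n \ge 1$ and write
\[
Z^{\pm n} = \mathcal{E}^{\pm n}_T \cdot \exp\!\left( \tfrac{n^2 \mp n}{2} \int_0^T f_s^2\, ds \right), \qquad \mathcal{E}^c_t := \exp\!\left( c \int_0^t f_s\, d\bar{\nois}_s - \tfrac{c^2}{2} \int_0^t f_s^2\, ds \right).
\]
By Cauchy-Schwarz,
\[
\EE[Z^{\pm n}] \le \EE\!\left[ (\mathcal{E}^{\pm n}_T)^2 \right]^{1/2} \EE\!\left[ \exp\!\left( (n^2 \mp n) \int_0^T f_s^2\, ds \right) \right]^{1/2},
\]
and the second factor is at most $K(n^2 \mp n)^{1/2}$. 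The first factor can be reduced by the same trick: writing $(\mathcal{E}^{\pm n}_T)^2 = \mathcal{E}^{\pm 2n}_T \cdot \exp(n^2 \int_0^T f_s^2 ds)$, using Cauchy-Schwarz again, and iterating a finite number of times, one is eventually left with $\EE[\mathcal{E}^{c}_T] = 1$ (true under Novikov, which holds at every level by hypothesis) multiplied by exponential moments of $\int_0^T f_s^2\, ds$ at fixed levels $\lambda_n$ depending only on $n$, all controlled by $K$. Combining these produces the desired bound $\EE[Z^n + Z^{-n}]^{1/n} \le C_n$ depending only on $n$ and $K$.

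The only genuine technical subtlety is not the Girsanov transformation itself, which is standard once Novikov is in force, but rather the passage between the Cameron-Martin regularity of $h$ and the $L^2$ regularity of its Brownian "shadow" $f$, together with the preservation of adaptedness under the inverse Volterra kernel; both ingredients are classical for fBm and are invoked as black boxes here.
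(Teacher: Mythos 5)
Your overall route is the one the paper itself takes (the paper simply notes that the exponential integrability of $\norm{h}_{\mathcal{H}^{H,T}}^2$ gives Novikov's condition, cites Theorem 2 of Nualart--Ouknine for the existence of $\QQ$, and refers to Theorem 14 of \cite{GG20} for the density moments): transfer to the underlying Brownian motion through the Volterra kernel, with $\norm{h}_{\mathcal{H}^{H,T}}^2=\int_0^T f_s^2\,ds$ and adaptedness of $f$ inherited from causality of the kernel and its inverse, then classical Girsanov. That part of your argument is fine.

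The moment bound, however, does not close as written. After your first Cauchy--Schwarz you are left with $\EE[(\mathcal{E}^{\pm n}_T)^2]$, and every further application of the same trick produces again the \emph{square} of a stochastic exponential, $\EE[(\mathcal{E}^{\pm 2^k n}_T)^2]$, never the bare expectation $\EE[\mathcal{E}^{c}_T]=1$; so the iteration does not terminate after finitely many steps, and pushing it to infinity would require controlling $\prod_k K(c_k)^{2^{-k}}$ with $c_k$ growing geometrically, which an arbitrary finite function $K$ need not allow. The standard repair is a single Cauchy--Schwarz after over-compensating the quadratic variation: write
\[
Z^{\pm n} \;=\; \exp\Bigl(\tfrac12\Bigl[\pm 2n\int_0^T f_s\, d\bar{\nois}_s - \tfrac{(2n)^2}{2}\int_0^T f_s^2\, ds\Bigr]\Bigr)\,\exp\Bigl(\bigl(n^2 \mp \tfrac{n}{2}\bigr)\int_0^T f_s^2\, ds\Bigr),
\]
so that
\[
\EE\bigl[Z^{\pm n}\bigr] \;\le\; \EE\bigl[\mathcal{E}^{\pm 2n}_T\bigr]^{1/2}\, \EE\Bigl[\exp\Bigl(\bigl(2n^2 \mp n\bigr)\int_0^T f_s^2\, ds\Bigr)\Bigr]^{1/2} \;\le\; K\bigl(2n^2 \mp n\bigr)^{1/2},
\]
using $\EE[\mathcal{E}^{\pm 2n}_T]\le 1$ (supermartingale property, with equality under Novikov, which your hypothesis supplies at every level). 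This gives $C_n$ depending on $h$ only through $K$, as claimed, and with this one-step substitution your proof is complete and matches the cited argument of \cite{GG20}.
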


\begin{proof}
Exponential integrability of Cameron-Martin norm of $h$ means that Novikov's condition is satisfied. Therefore Theorem 2 in \cite{NO02} holds, from which follows the existence of a suitable $\QQ$. For the second part, we proceed in the exact same way as in Theorem 14 in \cite{GG20}.
\end{proof}

We will need to compare the $\mathcal{H}^{H,T}$ norm with some fractional Sobolev (Besov) spaces. Given $q \in [1,\infty)$ and $\delta \in \left(\inv{q},1\right)$ the space $W^{\delta,q}([0,T], \RR^d)$ is the set of all paths $x \in \mathcal{C}([0,T], \RR^d)$ for which:

\begin{equation}\label{eq:fsobol}
\norm{x}_{W^{\delta,q};[0,T]}^q = \int_{[0,T]^2} \left( \frac{ \absv{x_{uv}} }{\absv{v-u}^{\delta+1/q}} \right)^q du dv < \infty,
\end{equation}
%
%


The following proposition gives a useful embedding of the Cameron-Martin space.

\begin{proposition}\label{prop:besovembedding}
Fix $H\in (0,1)$. For any $\kappa, \delta>0$ and $\kappa' > \kappa$ it holds that

\begin{equation}\label{eq:cmbesovembedding}
\ccc{H+1/2+\kappa'} \subset W^{H+1/2+\kappa,2} \subset \mathcal{H}^{H,T} \subset W^{H+1/2-\delta,2}
\end{equation}

In addition, for any $q \geq \inv{H+1/2}$ if $H<1/2$ and any $q \geq 1$ for $H > 1/2$, it holds that
\begin{equation}\label{eq:jainmonrad}
\mathcal{H}^{H,T} \subset \vvv{q}
\end{equation}

\end{proposition}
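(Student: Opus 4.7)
The plan is to treat the three inclusions of \eqref{eq:cmbesovembedding} and the variation embedding \eqref{eq:jainmonrad} in turn, relying on classical mapping properties of the kernel $K^H$ where needed. The leftmost embedding is a direct computation: if $x \in \ccc{H+1/2+\kappa'}$ has Hölder seminorm at most $M$, substituting into \eqref{eq:fsobol} with $q = 2$ and $\delta = H+1/2+\kappa$ bounds the integrand by $M^2 |v-u|^{2(\kappa'-\kappa)-1}$, which is integrable on $[0,T]^2$ exactly because $\kappa' > \kappa$.

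For the middle inclusions $W^{H+1/2+\kappa,2} \subset \mathcal{H}^{H,T} \subset W^{H+1/2-\delta,2}$, the plan is to use that the map $f \mapsto \int_0^{\cdot} K^H(\cdot, s)\, f(s)\, ds$ is by construction an isometry from $L^2([0,T])$ onto $\mathcal{H}^{H,T}$. The question then reduces to showing that this operator maps $L^2 \to W^{H+1/2-\delta,2}$ continuously for every $\delta > 0$, and that its inverse maps $W^{H+1/2+\kappa,2} \to L^2$ continuously for every $\kappa > 0$. Both statements are classical consequences of the fact that $K^H$ factors, up to bounded smooth multiplicative weights, as a Riemann--Liouville fractional integral of order $H+1/2$; these mapping properties are standard in the analysis of the Cameron--Martin space of fBm (see e.g.\ \cite{Nua06intro}), and I will invoke them directly rather than reproduce the kernel computation, which is also consistent with the paper's convention of not specifying $K^H$. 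The arbitrarily small losses $\kappa, \delta$ correspond to unavoidable endpoint behaviour on $[0,T]$.

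For the variation embedding I split into cases. When $H > 1/2$, choosing $\delta \in (0, H-1/2)$ in the previous step places $\mathcal{H}^{H,T}$ inside a fractional Sobolev space of regularity strictly above one, hence of absolutely continuous paths, yielding $\mathcal{H}^{H,T} \subset \vvv{1} \subset \vvv{q}$ for any $q \geq 1$. When $H \leq 1/2$, I combine \eqref{eq:cmbesovembedding} with a Besov-to-variation embedding of the form $W^{\beta,2} \subset \vvv{1/\beta}$ for $\beta \in (1/2, 1)$, itself obtained from a Garsia--Rodemich--Rumsey type inequality: GRR controls $|h_{uv}|$ by $|v-u|^\beta$ times the Besov seminorm, and a standard partition argument upgrades this pointwise bound to the desired $1/\beta$-variation bound. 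Taking $\beta = H+1/2 - \delta$ with $\delta > 0$ arbitrarily small yields the result for every $q > 1/(H+1/2)$; the limiting case $q = 1/(H+1/2)$ is reached either via a sharper version of GRR or by directly invoking the classical Jain--Monrad theorem on variation of Gaussian processes. The main technical obstacle is concentrated in the middle inclusions and is circumvented by citation rather than unpacked explicitly.
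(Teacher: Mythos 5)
Your handling of the first inclusion and of the two middle inclusions in \eqref{eq:cmbesovembedding} is fine and essentially the paper's own route: the first follows by inserting the H\"older bound into \eqref{eq:fsobol}, and the middle ones amount to the statement that $K^H$ (resp.\ its inverse) acts, up to harmless weights, as fractional integration (resp.\ differentiation) of order $H+1/2$ between $L^2$ and the $W^{\cdot,2}$ scale; the paper also settles this by citation (to \cite{Pic10}, \cite{Dec05}, \cite{Zah98}).

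The genuine gap is in \eqref{eq:jainmonrad}. For $H>1/2$ you take $\delta\in(0,H-1/2)$, so that $H+1/2-\delta>1$, and argue that membership in $W^{H+1/2-\delta,2}$ forces absolute continuity. But with the definition \eqref{eq:fsobol} used throughout (a Gagliardo-type double integral, stated only for indices in $(\inv{q},1)$), a space of index $\geq 1$ is degenerate: already for the linear path $x_t=t$ the integrand is $\absv{v-u}^{1-2(H+1/2-\delta)}$ with exponent $<-1$, which is non-integrable at the diagonal, so non-constant Cameron--Martin paths are \emph{not} in $W^{H+1/2-\delta,2}$ for such $\delta$, and the inclusion you rely on fails (it is also not what the proposition asserts, since its indices must remain below $1$ for the definition to apply). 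Hence your $H>1/2$ case is unproved; the correct route is to establish $\mathcal{H}^{H,T}\subset\vvv{1}$ directly (from the explicit form of $K^H$, or by citing \cite{FGGR16} as the paper does). Separately, for $H<1/2$ your GRR/Besov-to-variation chain only yields $q>\inv{H+1/2}$, and the endpoint $q=\inv{H+1/2}$ cannot be recovered by ``a sharper GRR'': the loss $\delta>0$ in $\mathcal{H}^{H,T}\subset W^{H+1/2-\delta,2}$ is itself unavoidable (at $\delta=0$ the inclusion is false), so no downstream sharpening helps. Nor is the needed statement the classical Jain--Monrad theorem, which concerns sample paths of Gaussian processes rather than their Cameron--Martin spaces; what is required is the Cameron--Martin variation embedding of Friz--Victoir/\cite{FGGR16} (Examples 2.4 and 2.8), which is exactly what the paper invokes and which your proposal gestures at but does not actually reach.
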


\begin{proof}

In \eqref{eq:cmbesovembedding} the first inclusion follows from \eqref{eq:fsobol}, the rest by observing that the inverse operator $K^{-1}_H$ corresponds to $D^{H+1/2}$, i.e. $\norm{ K^{-1}_H f }_{L^2} \sim \norm{ D^{H+1/2} f}_{L^2}$ (see \cite{Pic10}, section 5) and combining it with Proposition 5 in \cite{Dec05} and Proposition 5.3.2 from \cite{Zah98} cited therein.

The embedding \eqref{eq:jainmonrad} is shown in \cite{FGGR16}, Examples 2.8 and 2.4.
\end{proof}

In addition to \eqref{eq:rlfbm}, it will also be convenient to work with an other represenation of $\wh$ in terms of a two-sided Brownian motion $({B}_t)_{t\in \RR}$ (which holds on a possibly enlarged probability space) :

\begin{equation}\label{eq:mandelbrot}
\wh_t = \int_{-\infty}^t k(t,s) dB_s, \quad  k(t,r) = c_H\left( (t-r)_+^{H-1/2} - (-r)_+^{H-1/2} \right)
\end{equation}

where $\nois$ is a two-sided Wiener process with filtration $\FF_t = \sigma(\nois_s; s<t)$ \bluepassage{and $c_H = \Gamma(H+1/2)^{-1}$ is a normalising constant} . Then it can be shown (see for instance \cite{GG20}, section 3.2 or \cite{Pic10}, \cite{Nua06intro}) that for fixed $s<t$ we can decompose $\wh_t$ in two parts, the first one of which is independent of $\FF_s$, the other one is $\FF_s$-measurable, $\wh_t = W^{H,1}_{st} + W^{H,2}_{st}$, where both are Gaussian processes with mean zero. 

\bluepassage{We will denote the convolution with a Gaussian density with variance $t$ as $P_t f = (P_t * f)$ }. Then for $f$ bounded

\[ \EE_{\FF_s} f(\wh_r+ x)  =  \EE_{\FF_s} f(\EE_s \wh_r  + x) = (P_{\sigma^2(s,r)} f)(\EE_s \wh_r+x) \]

where $\sigma^2(s,t) := \inv{d} \EE \left( \wh_t - \EE^{\FF_s} W_t \right)^2= \bluepassage{c^2_H/(2H)} (t-s)^{2H}$ a fact that we will use in Section 3 repeatedly.


We will also make use of Clark-Ocone formula (see \cite{Nua06intro}). For $s \geq 0$, let $X$ be a \redpassage{$\FF_s$-measurable} \bluepassage{$\sigma((B_z)_{z \in \RR})$-measurable } random variable $X$ with Malliavin derivative $D_{\cdot} X$. Then the Clark-Ocone formula states that (under suitable integrability conditions):

\[ X = \EE_{s} X + \int_s^{\infty} \EE_{z} \left[D_z X\right] dB_z \]

If additionally we assume that $X$ is $\FF_u$-measurable for $u > s$, then:

\[ X = \EE_s X + \int_s^u \EE_z \left[ D_z X \right] dB_z \]

For $f \in \ccc{1}$ and $\wh_t = \int_{-\infty}^t k(t,z) dB_z$ the Malliavin derivative of $X=f(\wh_t), s < t$ is given by

\begin{equation}\label{eq:malliavinfdx}
D_s f(\wh_t) = \nabla f(\wh_t) \cdot k(t,s) 
\end{equation}


\bluepassage{
Throughout the paper, we will work with a $d$-dimensional fBm $W$, which simply means $W = (W^1,\ldots, W^d)$ where the $W^i$ are independent scalar fBms. For ease of notation, we will always keep the summation over coordinates implicit and write all the computations as if we were in the scalar case. Since none of the arguments depend on the dimension, this should not lead to any confusion.}
\subsection{Nonlinear integration}

Since averaged fields will be understood as nonlinear Young integrals, we will state their existence in $p$-variation. The proofs use deterministic sewing lemma for general controls. In particular, for any given path $X$ we will always understand $\omega_X(s,t) = \qvarst{X}^q$ and use it as a control function.

We will define the nonlinear Young integral along maps $A =A(t,x) \in \vvv{q}_t\ccc{1}_{x}, q <2$. 


%

\color{black}
In this subsection we will always fix $q < 2$. A sketch of proof of well-posedness is given in Proposition \ref{lem:integrest} below. 

\begin{definition}\label{def:nyoungint}
Let $A \in \vvv{q}_t\ccc{1}_x$. Then for any $\theta \in \vvv{q}$:

\[ \int_0^t A(dr, \theta_r) = \lim_{\absv{\PC} \to 0} \sum_{[u,v] \in \PC} A_{uv}(\theta_u) \]

where $\PC$ is taken among partitions of $[0,t]$.
\end{definition}

 
\begin{proposition}\label{lem:integrest}
Under the assumptions on $A,\theta$ in \ref{def:nyoungint} the nonlinear Young integral is well-defined and comes with the following bound, where $C$ is a generic constant:

\begin{equation}\label{eq:indefnyoung}
\qvarst{ \int_0^{\cdot} A(dr,\theta_r) } \leq 2C \left(  \qvarstCx{A} \qvarst{\theta} +  \qvarstCx{ A} \absv{\theta_0 - A(0,0)} \right)
\end{equation}

Analogously, for linear integral, where $W, B \in \vvv{q}$ are real-valued paths:
\begin{equation}\label{eq:indeflyoung}
\qvarst{ \int_0^{\cdot} B_r dW_r  } \leq 2 C\left( \qvarst{ W } \qvarst{ B} + \absv{ B_s } \qvarst{W} \right)
\end{equation}

\end{proposition}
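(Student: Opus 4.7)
The plan is to apply the deterministic sewing lemma for superadditive controls to the germ
\[
\Xi_{st} := A_{st}(\theta_s) = A(t, \theta_s) - A(s,\theta_s).
\]
Using the identity $A_{st}(x) = A_{su}(x) + A_{ut}(x)$, which holds pointwise in $x$, the defect is immediately
\[
\delta_u \Xi_{st} = A_{st}(\theta_s) - A_{su}(\theta_s) - A_{ut}(\theta_u) = A_{ut}(\theta_s) - A_{ut}(\theta_u).
\]
The Lipschitz-in-$x$ control on $A_{ut}$ together with the $q$-variation of $\theta$ then gives the key bilinear bound
\[
|\delta_u \Xi_{st}| \leq \|A_{ut}\|_{\mathcal{C}^1_x}\, |\theta_s - \theta_u| \leq \omega_A(u,t)^{1/q}\, \omega_\theta(s,u)^{1/q} \leq \omega(s,t)^{2/q},
\]
where $\omega_A(s,t) := \|A\|^q_{q-var(\mathcal{C}^1_x);[s,t]}$, $\omega_\theta(s,t) := \|\theta\|^q_{q-var;[s,t]}$, and $\omega := \omega_A + \omega_\theta$ is superadditive.

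Since $q<2$, the exponent $2/q$ exceeds $1$, so the sewing lemma applies and produces the limit
\[
I_t := \int_0^t A(dr,\theta_r) = \lim_{|\mathcal{P}|\to 0} \sum_{[u,v]\in\mathcal{P}} A_{uv}(\theta_u)
\]
(which is precisely the nonlinear Young integral of Definition \ref{def:nyoungint}), together with the pointwise remainder estimate $|I_{st} - \Xi_{st}| \leq C(q)\,\omega(s,t)^{2/q}$.

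To pass from this to the $q$-variation bound \eqref{eq:indefnyoung}, I would further estimate $|\Xi_{uv}| = |A_{uv}(\theta_u)|$ by combining the $\mathcal{C}^1_x$ seminorm of $A_{uv}$ with a linearisation of $A_{uv}$ around a fixed reference point (which is what motivates the $|\theta_0 - A(0,0)|$ term in the stated bound). Summing the $q$-th powers of both $|\Xi_{uv}|$ and the sewing remainder over a partition, and using superadditivity of $\omega$ (so that $\sum \omega(u,v)^2 \leq \omega(0,t)^2$), then yields \eqref{eq:indefnyoung}.

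The linear estimate \eqref{eq:indeflyoung} follows from the exact same template with the simpler germ $\Xi_{st} := B_s W_{st}$, whose defect factorises as $\delta_u \Xi_{st} = -(B_u - B_s)(W_t - W_u)$; here the Lipschitz bound becomes trivial. The main obstacle is not analytic but bookkeeping: matching the precise constants and the choice of reference point in the direct $\Xi$-bound so as to reproduce the exact form of \eqref{eq:indefnyoung}. The sewing lemma itself is invoked as a black box.
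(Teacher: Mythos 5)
Your proposal is correct and follows essentially the same route as the paper: the same germ $A_{st}(\theta_s)$, the same defect bound $|\delta_u \Xi_{st}| \leq \|A\|_{q-var(\mathcal{C}^1_x)}\|\theta\|_{q-var}$, the sewing lemma for controls (the paper cites Lemma 2.2 of \cite{DGHT19}, using the product of two controls each to power $1/q$ rather than your single control $\omega^{2/q}$ — an equivalent reduction), and the same final step of bounding the germ plus remainder on each subinterval and summing $q$-th powers, with the supremum of $\theta$ controlled by its initial value plus its $q$-variation to produce the $\theta_0$-dependent term.
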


\begin{proof}
Define $Z_{st} = A_{st}(\theta_s)$. Then it holds that $\absv{ \delta_u Z_{st} } \leq \qvarstCx{A}  \qvarst{\theta}$. Since $q < 2$ we have that this product is a control, so that we can apply sewing lemma for general controls, see Lemma 2.2 in \cite{DGHT19}. The limiting path is our nonlinear Young integral and the estimate \eqref{eq:indefnyoung} follows from 
\begin{equation}\label{eq:controlbdyoung}
\absv{ \int_s^t A(dr, \theta_r) - A_{st}(\theta_s) } \leq \qvarstCx{A}  \qvarst{\theta}.
\end{equation}

This implies, by the spatial Lipschitz property of $A$ that:

\[ \qvart{ \int_0^{\cdot} A(dr,\theta_r)} \leq \qvart{ A } \qvart{ \theta} + \qvart{ A } \norm{ \theta }_{\infty;[0,t]} \]

Estimate \eqref{eq:indeflyoung} follows by identical reasoning - $A(t,x) = x W_t$, so for any subinterval $[u,v] \subset [s,t]$ we have:

\[ \absv{ \int_u^v B_r dW_r } \leq \norm{ W }_{q;[u,v]} \norm{ B}_{q;[u,v]} + \norm{ B }_{\infty;[s,t]} \norm{ W }_{q;[u,v]} \]

and then desired bound on variation norm follows. Once we obtained both variation estimates, we can observe $\norm{ B }_{\infty;[s,t]} \leq \absv{ B_s } + \qvarst{ B}$ and get the bound in terms of initial condition.
\end{proof}

\bluepassage{We now state a Gronwall-type estimate that we will need in the sequel. Note that this is not exactly a standard one, due to the fact that the left-point of the interval is kept fixed at $0$.}


\bluepassage{

\begin{lemma}\label{lem:stabest}
Fix $K > 0$ and $A \in \ccc{q-var}_t\ccc{1}_x$ for some $q<2$, with $A(t,0) = 0$. Let $Y$ be a path satisfying:

\[ \forall t \geq 0, \qvart{Y} \leq \qvart{ \int_0^{\cdot} A(dr, Y_r) } + K \]

Then for arbitrarily small $\eps > 0$ and some constants $C_1, C_2 > 1$:

\[ \forall t \geq 0, \norm{ Y }_{q-var;[0,t]} < C_1 \exp\left( C_2 \qvartCx{ A}^{q(1+\eps)} \right) \left( \absv{Y_0}+ K \right)  \]

\end{lemma}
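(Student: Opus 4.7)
The plan is to combine the nonlinear Young estimate from Proposition \ref{lem:integrest} with a greedy partitioning argument. The main difficulty is that the assumption only bounds $\qvart{Y}$ in terms of the integral on the full interval $[0,t]$: there is no local inequality of the form $\norm{Y}_{q-var,[s,t]} \leq \norm{I}_{q-var,[s,t]} + K$ that would permit a direct interval-by-interval application. I would circumvent this by working with the cumulative quantities $F_k := K + \norm{I}_{q-var,[0,\tau_k]}$ along a well-chosen partition, which by the hypothesis automatically dominate $\norm{Y}_{q-var,[0,\tau_k]}$.

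Concretely, I would first observe that the argument in the proof of Proposition \ref{lem:integrest}, applied on any subinterval $[s,t]$ and using $A(\cdot,0)=0$ (so that $|A_{uv}(Y_u)| \leq \qvarstCx{A}|Y_u|$ and $\norm{Y}_{\infty,[s,t]} \leq |Y_s| + \norm{Y}_{q-var,[s,t]}$), yields for $I_t := \int_0^t A(dr, Y_r)$ the local estimate
\[ \norm{I}_{q-var,[s,t]} \leq C\, \qvarstCx{A}\, (|Y_s| + \norm{Y}_{q-var,[s,t]}). \]
Then I would construct a greedy partition $0 = \tau_0 < \tau_1 < \ldots < \tau_N \leq t$ with $C\norm{A}_{q-var(\ccc{1}_x),[\tau_k, \tau_{k+1}]} = 1/2$ for $k < N$. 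By super-additivity of the control $\omega_A(s,t) := \norm{A}_{q-var(\ccc{1}_x),[s,t]}^q$, the number of pieces is bounded as $N \leq (2C)^q \qvartCx{A}^q$.

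Finally, I would bootstrap. Combining the triangle inequality $\norm{I}_{q-var,[0,\tau_{k+1}]} \leq \norm{I}_{q-var,[0,\tau_k]} + \norm{I}_{q-var,[\tau_k,\tau_{k+1}]}$ with the local integral estimate, and using $|Y_{\tau_k}| \leq |Y_0| + F_k$ together with $\norm{Y}_{q-var,[\tau_k,\tau_{k+1}]} \leq F_{k+1}$ (from super-additivity of $\omega_Y$), I expect to arrive at a linear recursion of the form $F_{k+1} \leq 3F_k + |Y_0|$; induction then gives $F_N \leq 3^N(|Y_0|+K)$. Combined with the bound on $N$, this yields $\norm{Y}_{q-var,[0,t]} \leq C_1 \exp(C_2 \qvartCx{A}^q)(|Y_0|+K)$, which in particular implies the claimed bound (the slack $\eps$ in the exponent being convenient to absorb $q$-dependent constants or the last partial sub-interval of the greedy partition). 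The principal obstacle is really the global-versus-local asymmetry noted at the outset, which is resolved by working with the cumulative surrogate $F_k$ in place of a (non-existent) local bound on $\norm{Y}$.
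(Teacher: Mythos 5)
Your proposal is correct, and it takes a genuinely different route from the paper's proof. The paper also builds a greedy partition $\{t_i\}$ with $\|A\|_{q-var(\ccc{1}_x);[t_i,t_{i+1}]}$ of size $1/(2C_q)$ and bounds the number of pieces $N \lesssim \qvartCx{A}^q$ by superadditivity, but it then runs an induction directly on $\norm{Y}_{q-var;[0,t_N]}$: it splits $\norm{I}_{q-var;[0,t_N]} \leq \norm{I}_{q-var;[0,t_{N-1}]} + \norm{I}_{q-var;[t_{N-1},t_N]}$ and re-estimates the first term through the nonlinear Young bound with the \emph{full} factor $\norm{A}_{q-var(\ccc{1}_x);[0,t_{N-1}]} \sim N$. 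This produces a recursion of the type $g(N) \gtrsim N\, g(N-1)$, i.e.\ factorial growth, which the paper absorbs into $\exp(C N^{1+\delta})$ — this is exactly the origin of the $\eps$-loss in the exponent $q(1+\eps)$. Your argument avoids re-estimating the integral on $[0,\tau_k]$: by carrying the cumulative quantity $F_k = K + \norm{I}_{q-var;[0,\tau_k]}$ (which dominates $\norm{Y}_{q-var;[0,\tau_k]}$ by the hypothesis) and applying the local Young estimate only on the small interval $[\tau_k,\tau_{k+1}]$, where the $\|A\|$-factor is $\leq 1/2$, you get the linear recursion $F_{k+1} \leq 3F_k + |Y_0|$ and hence a purely exponential bound $3^N(|Y_0|+K)$ with $N \lesssim \qvartCx{A}^q$. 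This yields the conclusion with exponent $q$ instead of $q(1+\eps)$, which trivially implies the stated bound (since $x^{q} \leq 1 + x^{q(1+\eps)}$); so your argument is in fact slightly sharper and cleaner. Two small points to keep in mind when writing it up: the per-interval bound should use $|A_{uv}(Y_u)| \leq \norm{A}_{q-var(\ccc{1}_x);[u,v]}|Y_u|$ together with superadditivity before summing $q$-th powers (as in Proposition \ref{lem:integrest}), rather than the global factor $\qvarstCx{A}$ inside the sum; and the exact-equality stopping times require continuity of $t \mapsto \norm{A}_{q-var(\ccc{1}_x);[s,t]}$, a point the paper's proof also relies on.
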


\begin{proof}
See Appendix \ref{subsec:variationyoung}
\end{proof}

}

\redpassage{
\begin{lemma}\label{lem:stabest}
Let $A \in \vvv{q}_t, h \in \vvv{q}$\bluepassage{, where $q<2$}. Let $Y$ be a path for which the following inequality is satisfied:

\begin{equation}\label{eq:varineq}
\qvarst{ \delta Y_{\cdot} } \ls  \absv{ Y_s } + \qvarst{ \int_0^{\cdot}  dA_r Y_r} + \qvarst{ h_{\cdot} }
\end{equation}

then we have the following global estimate:

\[ \qvart{Y} \leq \exp( C_{q,R} \qvart{A}^q ) (\absv{Y_0} + \qvart{h}) \]

where $C_{q,R}$ is a constant depending on the implicit constant in \eqref{eq:varineq} and $q$.
\end{lemma}

\begin{proof}
See Appendix
\end{proof}

}

Finally, we recall here the linearisation of difference of two nonlinear integrals, integrated with respect to the same map $A$.

\begin{lemma}\label{lem:linearisation}
Let $A$ be a map $A \in \ccc{\gamma}_t\ccc{1+\nu}_x$ such that $\gamma(1+\nu) > 1$. Then for paths $\theta,\bar{\theta} \in \ccc{\gamma}$:

$$\int_0^t A(dr,\theta_r) - \int_0^t A(dr,\bar{\theta}_r) = \int_0^t \left( \theta_r - \bar{\theta}_r \right) dV_r$$

where the integral on right hand-side is understood as an linear Young integral, $V \in \ccc{\gamma}$ and:

$$V_{\cdot} = \int_0^1 \int_0^{\cdot} \nabla A(ds, \theta_s + x(\bar{\theta}_s - \theta_s) ) dx$$
\end{lemma}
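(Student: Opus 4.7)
The plan is to combine the fundamental theorem of calculus in the spatial variable with the Riemann-sum definition of the nonlinear Young integral from Definition \ref{def:nyoungint}, interchanged via Fubini.

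For each $x \in [0,1]$, introduce the interpolating path $\Theta^x_s := \theta_s + x(\bar{\theta}_s - \theta_s)$, which lies in $\ccc{\gamma}$ with seminorm bounded by $\norm{\theta}_{\ccc{\gamma}} + \norm{\bar{\theta}}_{\ccc{\gamma}}$ uniformly in $x$. Since $\nabla A \in \ccc{\gamma}_t\ccc{\nu}_x$ and $\gamma(1+\nu) > 1$, the same sewing argument as in Proposition \ref{lem:integrest}, applied in the H\"older scale, shows that the nonlinear Young integral $V^x_t := \int_0^t \nabla A(dr, \Theta^x_r)$ is well-defined, belongs to $\ccc{\gamma}$, and satisfies the sewing bound
\[ \absv{V^x_{uv} - \nabla A_{uv}(\Theta^x_u)} \lesssim \absv{v-u}^{\gamma(1+\nu)} \]
with implicit constant uniform in $x$. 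Fubini then gives that $V_t := \int_0^1 V^x_t\, dx$ is well-defined and lies in $\ccc{\gamma}$, with $V_{uv} = \int_0^1 V^x_{uv}\, dx$.

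Next, for any increment $[u,v]$ of a partition $\mathcal{P}$ of $[0,t]$, the pointwise fundamental theorem of calculus yields
\[ A_{uv}(\theta_u) - A_{uv}(\bar{\theta}_u) = (\theta_u - \bar{\theta}_u) \cdot \int_0^1 \nabla A_{uv}(\Theta^x_u)\, dx, \]
and averaging the sewing bound in $x$ gives $\int_0^1 \nabla A_{uv}(\Theta^x_u)\, dx = V_{uv} + O(\absv{v-u}^{\gamma(1+\nu)})$. Summing over $\mathcal{P}$ therefore produces
\[ \sum_{[u,v] \in \mathcal{P}}\!\!\big[A_{uv}(\theta_u) - A_{uv}(\bar{\theta}_u)\big] = \sum_{[u,v] \in \mathcal{P}}\!\!(\theta_u - \bar{\theta}_u) V_{uv} + R(\mathcal{P}), \]
with $\absv{R(\mathcal{P})} \to 0$ as $\absv{\mathcal{P}} \to 0$ since $\gamma(1+\nu) > 1$. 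The left-hand side converges to $\int_0^t A(dr,\theta_r) - \int_0^t A(dr,\bar{\theta}_r)$ by Definition \ref{def:nyoungint}, while the remaining Riemann sum converges to the linear Young integral $\int_0^t (\theta_r - \bar{\theta}_r)\, dV_r$; the latter is well-posed because $\gamma(1+\nu)>1$ together with $\nu \leq 1$ forces $\gamma > 1/2$, hence $2\gamma > 1$.

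The main technical point is the uniformity in $x \in [0,1]$ of the sewing estimates for $V^x$. This uniformity is needed both to justify the Fubini step defining $V$ and to control $R(\mathcal{P})$; it follows from the fact that the sewing constant depends on the data only through $\norm{A}_{\ccc{\gamma}_t\ccc{1+\nu}_x}$ (which is independent of $x$) and through $\norm{\Theta^x}_{\ccc{\gamma}}$ (which is bounded uniformly in $x$).
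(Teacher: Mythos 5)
Your proof is correct, and it is essentially the argument behind the result the paper simply cites (Lemma 6 of \cite{GG20}): the fundamental theorem of calculus applied to the germ $A_{uv}$, sewing estimates for $V^x$ that are uniform in the interpolation parameter, and Fubini to identify $V$, followed by comparison of Riemann sums. Your observation that $\gamma(1+\nu)>1$ with $\nu\leq 1$ forces $\gamma>1/2$, so that the linear Young integral $\int_0^t(\theta_r-\bar{\theta}_r)\,dV_r$ is classically well-posed, correctly addresses the only point the statement leaves implicit.
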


\begin{proof}
See Lemma 6 in \cite{GG20}.
\end{proof}

\subsection{Averaged fields}

As indicated in the introduction, one of the crucial elements of the proof is the consideration of the averaged field $T^w f$,   \bluepassage{which we define here for $w$ an arbitrary $\RR^d$-valued path}. This definition is intrinsic and does not depend on any mollification procedure, nor it is reliant on the probabilistic properties of a path $w$, \bluepassage{which is why we abstain from using $T^{\wh}$ or $T^B$ and introduce a separate notation}. In the simple case of a continuous $f \in \ccc{0}_{t,x}$, the averaged field coincides with the following integral:

\[ (t,x) \mapsto T^w f_t(x) := \int_0^t f_r(w_r + x) dr \]

Now let us consider the case where $f \in L^1E$ where $E$ is a Banach space continuously embedding in $\mathcal{S}'(\RR^d)$, with norm invariant under translation. Then the averaged field can be defined using translations $\tau^w: f \mapsto \tau^w f = f(\cdot + w)$ as a Bochner integral. 

\begin{definition}\label{def:averagedfield}
Let $w: [0,T] \mapsto \RR^d$ be a measurable function and $E$ as above. Then for fixed $f$ we define the averaged field $T^w$ as the linear map from $L^1E$ to $\ccc{0}E$ 

\[ T^w_t f := \int_0^t \tau^{w_s} f_s ds\;\; \forall t \in [0,T] \]
\end{definition}

It can also be useful to describe averaged fields by duality - in the sense of distributions, the dual of translation is $(\tau^w) = \tau^{-w}$ and we have for any $\varphi \in \mathcal{S}(\RR^d) \hookrightarrow E^*$:

\[ \mean{ T^w f, \varphi } = \int_0^t \mean{ f_s, \varphi(\cdot - w_s) } ds \]

The advantage of this relation is that it is now easy to show that spatial differentiation and averaging commute.

We also remark that averaged fields are continuous with respect to $f$, in the sense of the following proposition:
\begin{proposition} \label{prop:ConvAF}
Let $f \in L_t^qE$ and let $(f^n)_{n \geq 0}$ be a sequence of functions such that $\norm{ f^n - f }_{L^1_t E} \rightarrow_n 0$. Then $T^w f^n \to T^w f$ in $\ccc{0}_tE$.

In addition, if each of the $T^w f^n$ are continuous functions, and are locally equicontinuous (i.e. share a modulus of continuity on each compact), then $T^W f$ is also continuous, with the same local modulus of continuity.
\end{proposition}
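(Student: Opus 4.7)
For the first statement, my plan is to exploit the Bochner integral definition together with the translation invariance of $\norm{\cdot}_E$. Writing
\[ T^w_t f - T^w_t f^n = \int_0^t \tau^{w_s}(f_s - f^n_s)\,ds \]
and moving the norm inside the integral, the invariance $\norm{\tau^{w_s}g}_E = \norm{g}_E$ yields
\[ \norm{T^w_t f - T^w_t f^n}_E \leq \int_0^T \norm{f_s - f^n_s}_E\,ds = \norm{f - f^n}_{L^1_T E}, \]
with the upper bound independent of $t$. Since $\norm{f - f^n}_{L^1_T E} \to 0$ by assumption, this is exactly convergence in $\ccc{0}_T E$.

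For the second statement, my strategy is to combine the hypothesized local equicontinuity of $\{T^w f^n\}$ with the $\ccc{0}_T E$-convergence just obtained via an Arzel\`a--Ascoli argument. Fixing a compact $K \subset \RR^d$, the family $\{T^w f^n\}$ restricted to $[0,T] \times K$ is equicontinuous with a common modulus $\omega_K$. I will then establish uniform boundedness of this family on $[0,T] \times K$, which follows directly when $E$ embeds in $L^\infty_{\mathrm{loc}}$; otherwise I would first bound $\langle T^w f^n(t,\cdot), \phi\rangle$ for a fixed mollifier $\phi$ supported near some $x_0 \in K$ (using $E$-boundedness of the sequence, which is automatic from its convergence in $\ccc{0}_T E$) and then propagate pointwise boundedness to all of $K$ via the shared modulus. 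Arzel\`a--Ascoli then supplies a subsequence converging uniformly on $[0,T] \times K$ to a continuous limit $g$, which automatically inherits the modulus $\omega_K$.

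To identify $g$ with $T^w f$, I will invoke uniqueness of distributional limits: $T^w f^n(t,\cdot) \to T^w f(t, \cdot)$ in $E$ and hence in $\mathcal{S}'(\RR^d)$ for each $t$, while uniform convergence on $[0,T]\times K$ of the subsequence also implies distributional convergence of the same subsequence on $K$; the two limits must therefore coincide, so $g = T^w f$ on $[0,T] \times K$. A standard subsequence-of-every-subsequence argument then upgrades the subsequential convergence to uniform convergence of the full sequence on $[0,T]\times K$, and since $K$ was arbitrary, $T^w f$ admits a continuous representative on $[0,T]\times \RR^d$ with the prescribed local modulus.

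The main obstacle I anticipate is the mismatch between two modes of convergence: convergence in $\ccc{0}_T E$ provides only distributional information on each time slice, while the equicontinuity hypothesis is purely pointwise. The bridge is uniform boundedness on compacts, and in the absence of an embedding $E \hookrightarrow L^\infty_{\mathrm{loc}}$ this requires the small mollification detour sketched above. Everything else is standard once that bridge is in place.
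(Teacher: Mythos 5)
Your proposal is correct and takes essentially the same route as the paper: the first part is the identical Bochner-integral/translation-invariance estimate giving $\norm{T^w(f^n-f)}_{\ccc{0}E} \leq \norm{f^n-f}_{L^1E}$, and the second part is the same Arzel\`a--Ascoli argument followed by identification of the locally uniform limit with $T^w f$ as elements of $\mathcal{S}'(\RR^d)$. The only difference is that you spell out the uniform-boundedness-on-compacts step needed to invoke Arzel\`a--Ascoli (via testing against a mollifier and the shared modulus), a detail the paper leaves implicit.
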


\begin{proof}
By linearity of the Bochner integral $T^w f^n - T^w f = \int_0^t \tau^{w_s} \left(f^n_s - f_s \right)ds$, i.e. $\tau^{w_s}$ is now applied to the tempered distribution $f^n_s - f_s$. It follows:

\[ \norm{ T^w (f^n - f) }_{\ccc{0}E} \leq \norm{ f^n - f }_{L^1E} \to 0 \]

For the second part, note that by the Arzela-Ascoli theorem, it holds that, taking a subsequence if necessary, $T^w f^n$ converges locally uniformly to a continuous function $g$. But then, by the first part, $g$ and $T^w f$ are equal (as elements of $\mathcal{S}'(\RR^d)$) and the result follows.

\end{proof}

For more details and extended discussion on averaging operators we refer the reader to \cite{GG20}, Section 3.1.

\section{Regularity of the averaged field with perturbation}\label{sec:avgpert}

In this section, we consider the regularity properties of the averaged field for a fractional Brownian motion perturbed by an adapted process with sufficient (variation) regularity. The main result is the following.

\begin{theorem}\label{thm:continuity}
Let $W^H$ be a fractional Brownian motion with a Hurst index $H$ and consider the extended filtration $\FF$ from \eqref{eq:mandelbrot}. Then let $f \in \fsp$ and $\phi$ an $\FF_t$-adapted process whose increments can be bounded by $\absv{\phi_{st}} \leq \omega_{\phi}(s,t)^{\beta}$ for some $\beta \in (H,1]$ where $\omega_{\phi}$ a control function, which is adapted (in the sense that $\omega_{\phi}(s,t)$ is $\FF_t$-measurable for $s\leq t$). Pick $\xi \in (0,1]$, assume that 
\begin{equation}\label{cond:mgale}
\epsilon := 1/2+H(\alpha-\xi ) - \inv{p} >0,
\end{equation}
let
\begin{equation}\label{cond:integr}
\gamma := 1 - \inv{p} + H(\alpha-\xi-\eta)
\end{equation}

and fix $\eta \in [0,1]$ s.t.

\begin{equation}\label{cond:sewing}
\gamma + \eta \beta > 1.
\end{equation}

%
%

Then for each $\delta \in  (1/2, 1/2+\epsilon)$, $R>0$, there exist positive constants $c, \Lambda$ and a random variable $C_{\delta,R}$ such that it holds that for all pairs $s<t$ and finite $R>0$ 
 \begin{align*} \sup_{\absv{x}  < R} \frac{ \absv{\int_s^t T^{\wh+\phi} f(dr,x) }}{\nofn } + 
  &\sup_{x \neq y, \absv{x} + \absv{y} < R} \frac{ \absv{\int_s^t T^{\wh+\phi} f(dr,x) - \int_s^t T^{\wh+\phi} f(dr, y) }}{\nofn \absv{x-y}^{\xi}}\\
  & \leq c (t-s)^{\gamma} \omega_{\phi}(s,t)^{\eta \beta} + C_{\delta,R}(t-s)^{\delta} 
  \end{align*}

where $C_{\delta,R}$ additionally satisfies
  \[ \EE[\exp(\Lambda C_{\delta,R}^2)] < \infty.\]

\end{theorem}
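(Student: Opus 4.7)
\emph{Plan.} The natural approach is to apply the stochastic sewing lemma of L\^e to a germ obtained by freezing the adapted perturbation at the left endpoint. Set
\[ A_{st}(x) := \EE_s \int_s^t f_r(\wh_r + \phi_s + x) dr = \int_s^t (P_{\sigma^2(s,r)} f_r)(\EE_s \wh_r + \phi_s + x) dr, \]
using the Gaussian conditioning identity from Subsection~\ref{subsect:fbm} and the fact that $\phi_s$ is $\FF_s$-measurable. The true increment $\mathcal{I}_{st}(x) := \int_s^t f_r(\wh_r + \phi_r + x) dr$ differs from $A_{st}(x)$ by both a stochastic fluctuation of $\wh$ (handled by sewing) and the replacement of $\phi_r$ by $\phi_s$, which is exactly what will introduce the factor $\omega_\phi(s,t)^{\eta \beta}$ into the announced bound.

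\emph{Germ estimates.} Using the heat-kernel bound $\norm{\nabla^\zeta P_t g}_{\infty} \ls t^{(\alpha - \zeta)/2} \norm{g}_{\ccax}$ with the regularisation budget $\zeta = \xi + \eta$ split between the H\"older exponent $\xi$ (for the $x$-increment) and an extra $\eta$ reserved for the perturbation, and interpolating against $|\phi_r - \phi_s|^\eta \leq \omega_\phi(s,r)^{\eta \beta}$, one obtains, after integration in $r$,
\[ \norm{A_{st}(x) - A_{st}(y)}_{L^m} \ls \nofn |x-y|^\xi (t-s)^\gamma \omega_\phi(s,t)^{\eta \beta}, \]
with $\gamma$ as in \eqref{cond:integr}. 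For the sewing map, I compute $\delta_u A_{st}$ directly: inside $\EE_s \delta_u A_{st}$ the Gaussian shift $\EE_u\wh_r - \EE_s\wh_r$ together with the scale change $\sigma^2(u,r) \to \sigma^2(s,r)$ telescope neatly by the semigroup law of $(P_t)$ combined with the tower property, so the only surviving contribution comes from the replacement $\phi_u \to \phi_s$ and is bounded by $\nofn (t-s)^\gamma \omega_\phi(s,t)^{\eta\beta}$; summability in sewing is ensured by \eqref{cond:sewing}. The $L^2$ bound $\norm{\delta_u A_{st}(x)}_{L^2} \ls \nofn (t-s)^{1/2 + \epsilon}$ follows from a Clark-Ocone expansion based on \eqref{eq:malliavinfdx}, the margin $\epsilon$ being precisely \eqref{cond:mgale}; this part parallels \cite{GG20, GHM21}.

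\emph{Sewing conclusion and pathwise regularity.} The stochastic sewing lemma now produces a process $\mathcal{A}$ with $\norm{\mathcal{A}_t(x) - \mathcal{A}_s(x) - A_{st}(x)}_{L^m} \ls \nofn (t-s)^{1/2 + \epsilon}$. The identification $\mathcal{A} = \int_0^\cdot f_r(\wh_r + \phi_r + x) dr$ follows by approximating $f$ by smooth functions via Proposition~\ref{prop:ConvAF} and passing the uniform sewing bounds to the limit. Combining the SSL remainder with the estimate on $A_{st}$ yields the two announced terms in $L^m$-norm, pointwise in $x$. To promote this into the desired almost-sure uniform-in-$|x|\leq R$ statement and extract the sub-Gaussian tail on $C_{\delta,R}$, I apply a Garsia-Rodemich-Rumsey / Kolmogorov-type argument jointly in $(t,x)$, using any H\"older exponent strictly below $1/2 + \epsilon$ in time (which leaves the room $\delta < 1/2 + \epsilon$); the integrability $\EE\exp(\Lambda C_{\delta,R}^2) < \infty$ comes from the fact that the relevant increments live in a fixed Wiener chaos of $(B_z)_{z \in \RR}$ and hence satisfy Gaussian hypercontractivity.

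\emph{Main obstacle.} The most delicate point is the cancellation inside $\EE_s \delta_u A_{st}$: one must track simultaneously the change of conditioning filtration, the change of conditional mean of $\wh$, and the change of smoothing scale $\sigma^2$, and verify that the perturbation-driven residual is the only term that fails to cancel. The exact trade-off between the H\"older exponent $\eta$ extracted on $\phi$ and the smoothing power it consumes in $P_{\sigma^2(s,r)}$ is what dictates the form of $\gamma$ and the threshold \eqref{cond:sewing}, and getting this balance right is the technical heart of the argument.
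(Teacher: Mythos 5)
Your overall skeleton coincides with the paper's: freeze $\phi$ at the left endpoint inside $\EE_s$ to form the germ, split $\delta_u A_{st}$ into the residual coming from the replacement $\phi_u \to \phi_s$ plus a conditionally centered Clark--Ocone stochastic integral, sew, identify the limit with the averaged field, and chain to get an almost-sure statement. The genuine gap is in how you run the sewing step. You conclude that ``the stochastic sewing lemma now produces a process $\mathcal{A}$ with $\norm{\mathcal{A}_t(x)-\mathcal{A}_s(x)-A_{st}(x)}_{L^m} \ls \nofn (t-s)^{1/2+\epsilon}$'' and that the two announced terms hold ``in $L^m$-norm''. This cannot be right: the drift-type part of the sewing remainder accumulates the bound $(t-s)^{\gamma}\omega_{\phi}(s,t)^{\eta\beta}$, and $\omega_{\phi}$ is only assumed to be an a.s.\ finite adapted control, with no integrability whatsoever, so this contribution has no $L^m$ bound and L\^e's stochastic sewing lemma (whose hypothesis requires a deterministic, or at least $L^m$, bound on $\EE_s\delta_u A_{st}$) is not applicable as stated. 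One must use a variant with random controls (the paper follows \cite{ABLM21}, Theorem 4.7, via an explicit dyadic construction in Lemmas \ref{lem:existence}--\ref{lem:existall}), whose output is a \emph{hybrid} bound: a pathwise term $(t-s)^{\gamma}\omega_{\phi}(s,t)^{\eta\beta}$, summable over dyadic scales thanks to \eqref{cond:sewing} and superadditivity, plus a two-parameter field $B_{st}$ carrying $L^m$ bounds $\ls \sqrt{m}\,(t-s)^{1/2+\epsilon}$. Relatedly, your ``germ estimate'' attributes the factor $\omega_{\phi}(s,t)^{\eta\beta}$ to $A_{st}(x)-A_{st}(y)$, but no increment of $\phi$ occurs there ($\phi$ is frozen at $s$); the correct germ bound is the deterministic $\absv{x-y}^{\xi}\nofn(t-s)^{1/2+\epsilon}$, and the random control enters only through $\delta_u A$ and hence through the sewing remainder.

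The final promotion to a bound uniform in $\absv{x},\absv{y}\leq R$ also needs more than a Kolmogorov/GRR argument ``jointly in $(t,x)$'': after the pathwise control term is split off, the field $B_{st}$ carrying the moment bounds is not additive in $t$, so time-chaining is not directly available; the paper first replaces it by a majorant $U_{st}$ satisfying the triangle inequality $\absv{U_{st}}\leq\absv{U_{su}}+\absv{U_{ut}}$ (Lemma \ref{lem:aacpathwise}) and then runs the dedicated chaining Lemma \ref{lem:chain}. Moreover, the Gaussian tails of $C_{\delta,R}$ do not follow from ``fixed Wiener chaos'' hypercontractivity---$f_r(\wh_r+\phi_r+x)$ is not a finite-chaos functional for general $f$. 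They come from the explicit stochastic-integral representation of the martingale contributions with a pathwise bounded integrand, so that BDG with optimal constants gives moment growth of order $\sqrt{m}$, and a Taylor/Stirling argument then yields $\EE[\exp(\Lambda C_{\delta,R}^2)]<\infty$. Finally, the identification with $\int_0^t f_r(\wh_r+\phi_r+x)\,dr$ requires its own short uniqueness-of-sewing argument for smooth $f$ (Lemmas \ref{lem:ident} and \ref{lem:sewuniq}); Proposition \ref{prop:ConvAF} and Fatou only serve afterwards to remove the smoothness assumption. So your route is the right one, but the random-control sewing step and the tail/chaining mechanism need to be repaired along these lines.
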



\begin{remark}
Let us comment on the assumptions made on the parameters in the Theorem. First note that \eqref{cond:sewing} implies that $\gamma >0$ since $\eta, \beta \leq 1$.

Consider the case when $\eta=1$. In this case, the theorem says that, a.s., $T^{W^H+\phi}f$ takes values in $C^{q-var}_t C^{\xi}_x$, for some $q<2$, as long as
\[  \xi < \alpha - \frac{1}{pH} +   \left(\frac{1}{2H} \right) \wedge \left(\frac{\beta}{H} - 1\right)  \]
If the regularity $\beta$ of $\phi$ is such that $\beta > H + 1/2$, the r.h.s. is equal to $\alpha - \frac{1}{pH}  + \frac{1}{2H}$ , which is the same as the spatial regularity of $T^{W^H} f$ (as obtained in \cite{CG16}). 

The general statement above where we can choose $\eta \in [0,1]$ (at a cost in the attained regularity $\xi$) will be needed to obtain Gaussian integrability (as required e.g. to apply Girsanov's theorem), in cases where $\omega_\phi$ does not have Gaussian tails (this is the case for reflection measure of fBm, see Proposition \ref{thm:expintegrbox}).
\end{remark}

Our basic building block is going to be the following germ \eqref{eq:bblock}, defined for smooth and bounded functions $f$, where $P_{\sigma^2(s,r)}$ is heat semigroup with variance $\sigma^2(s,r)$ and $W^{2,H}_{sr}$ is the $\FF_s$-measurable part of $\wh_{sr}$, see subsection \ref{subsect:fbm}. Filtration $\FF_s$ is the filtration of the two-sided Brownian motion in \eqref{eq:mandelbrot}.

\begin{equation}\label{eq:bblock}
A_{st}[f](x) = \int_s^t \EE_s f_r(W^H_r + \phi_s + x) dr = \int_s^t (P_{\sigma^2(s,r)} f_r)(W^{2,H}_{sr} + \phi_s + x) dr
\end{equation}

In other words, we integrate out the part of $\wh_r$ that is independent of $\FF_s$, see the proof of Lemma 6.4 in \cite{Le20} for details. 

The proof of Theorem \ref{thm:continuity} will then be obtained by a variant of the stochastic sewing lemma, applied to this germ. 
Let us give a sketch of the proof. Since we want to apply a sewing procedure, we consider for $s\leq u \leq t$ the quantity $\delta_u A_{st}  (= A_{st} - A_{su}-A_{ut}$), which we compute as
\begin{align*}
\delta_u A_{st} &= \int_u^t \EE_s f(\wh_r+\phi_s+x) dr - \int_u^t \EE_u f(\wh_r+\phi_u+x) dr \\
&=  \int_u^t \left( \EE_u f(\wh_r+ \phi_s+x) - \EE_u f(\wh_r + \phi_u +x) \right)dr + M_{st}^u, 
\end{align*}
where $M_{st}^u$ satisfies $\EE_s M_{st}^u = 0$.

Using the fact that $\phi_s,\phi_u$ are $\FF_u$-measurable, we can, using heat kernel estimates, bound the first term by $(t-s)^{\gamma} w_{\phi}(s,t)^{\eta \beta}$ with $\gamma + \eta \beta > 1$, $w_{\phi}$ being the control function of $\phi$. In addition, by an application of Clark-Ocone's formula and BDG inequality, $M_{st}^u$ turns out to have small enough moments, that is bounded by $\ls (t-s)^{1/2+\eps}$.

These two bounds make an argument similar to the stochastic sewing lemma in the sense of \cite{Le20} possible (more precisely, we use a variant with a random control as in \cite{ABLM21}, see Theorem 4.7 therein), \bluepassage{in Lemma \ref{lem:aacsewn} we construct a path $t \mapsto \mathcal{A}_t$, which enjoys the following bound on its increments:} \redpassage{after which we recover}:

\[ \absv{ \AAC_{st} } \leq (t-s)^{\gamma} w(s,t)^{\eta \beta} + B_{st} \]

where $B_{st}$ is defined as $L^m$-integrable (for every $m \geq 1$) map, corresponding to the sum of martingale terms. By a technical chaining argument we then convert $B$ to a.s. H\"older-type bounds.

\vspace{5mm}

Let us now proceed with the actual proof. In order to show spatial regularity of the averaged field, we will also consider the difference of \eqref{eq:bblock}, evaluated at two different points $x,y$.

In order to show spatial regularity of averaged field, we will also consider the difference of \eqref{eq:bblock}, evaluated at two different points $x,y$. \bluepassage{The following lemma treats the case of $\xi \in (0,1)$. The case $\xi=1$ follows from Corollary \ref{cor:differentiable_fields} below}.

\begin{lemma}\label{lem:asutdecomp}
\bluepassage{ Let $f \in L_t^{\infty}\ccc{\infty,b}, \xi \in (0,1)$. For any $x, y \in \RR^d, x \neq y, (s,u,t) \in \Delta^3$ the difference of germs defined in \eqref{eq:bblock} can be decomposed as:

\[ \delta_u \left( A_{st}[f](x) - A_{st}[f](y) \right) = I^{x,y}_{sut} + M^{x,y}_{sut} \]

with $M_{sut} = \int_s^t \bar{J}^{s,u,t}_z d\nois_z$ and \[ \absv{ \bar{J}^{s,u,t}_z } \leq C \absv{x-y}^{\xi} \absv{t-s}^{1/2+H(\alpha-\xi)- \inv{p}} \nofn \]

and \[ \absv{ I_{sut} } \leq \absv{x-y}^{\xi} \nofn (t-s)^{1+H(\alpha+\eta-\xi) - \inv{p}} w(s,t)^{\eta \beta} \]

where $(\alpha,\xi)$ are linked by the inequality $\alpha > \xi - \inv{2H} + \inv{pH}$
}

\redpassage{
Let $f \in L_t^{\infty}\ccc{\infty,b}_x$. Then for any two $x,y \in \RR^d, x \neq y$, the germ defined in \eqref{eq:bblock} can be decomposed as:

$$\delta_u A_{st}[f](x) = I_{sut} + M_{sut}$$

where $$\forall (s,t) \in \Delta^2, \;\ I_{sut} \ls \nofn (t-s)^{\gamma} w(s,t)^{\eta \beta}$$
and  $\quad M_{sut} = \int_s^t \bar{J}^{s,u,t}_z dW_z$ where $\bar{J}^{s,u,t}$ can be estimated in the following way:

$$\absv{ \bar{J}^{s,u,t} } \leq C \absv{x-y}^{\xi} \absv{t-s}^{1/2+H(\alpha-\xi)- \inv{p}} \nofn$$

Moreover, for a difference of germs we have, for $k \in \{0,1\}$

$$\delta_u \left( A_{st}[\nabla^k f](x) - A_{st}[\nabla^k f](y) \right) = I_{sut}^{x,y} + M^{x,y}_{sut}$$

with:

$$\absv{ I^{x,y}_{sut} } \leq \nofn \absv{x-y}^{1 \land \xi-k} (t-s)^{\gamma} w_{\phi}(s,t)^{\eta \beta} \quad   \norm{ M^{x,y}_{sut} }_m \leq \sqrt{m} \absv{x-y}^{1 \land \xi-k} \nofn (t-s)^{1/2+}$$

and $\xi$ taken such that $\alpha > \xi - \inv{2H} + \inv{pH}$.

}
\end{lemma}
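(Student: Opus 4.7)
The plan is to decompose $\delta_u A_{st}[f](w)$ for $w \in \{x,y\}$ into a drift part and a martingale part, and then bound each separately after taking the difference across $x$ and $y$. Indeed, writing $\psi^w_r := f_r(\wh_r + \phi_s + w)$, one has
\[ \delta_u A_{st}[f](w) = \int_u^t (\EE_s - \EE_u) \psi^w_r \, dr + \int_u^t \EE_u \bigl[ f_r(\wh_r + \phi_s + w) - f_r(\wh_r + \phi_u + w) \bigr] dr, \]
so that $M^{x,y}_{sut}$ and $I^{x,y}_{sut}$ are naturally defined as the $(x,y)$-differences of the first and second terms respectively.

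\textbf{Drift term.} Using the identity $\EE_u f_r(\wh_r + \cdot) = P_{\sigma^2(u,r)} f_r(W^{2,H}_{ur} + \cdot)$ (which absorbs the $\FF_u$-independent Gaussian part of $\wh_r$ into a heat semigroup), I can rewrite the integrand of $I^{x,y}_{sut}$ at time $r$ as $g_r(W^{2,H}_{ur} + \phi_s) - g_r(W^{2,H}_{ur} + \phi_u)$ where $g_r(z) := P_{\sigma^2(u,r)} f_r(z+x) - P_{\sigma^2(u,r)} f_r(z+y)$. The central estimate is the two-parameter H\"older interpolation
\[ \absv{g_r(a) - g_r(b)} \leq \absv{x-y}^\xi \absv{a-b}^\eta \norm{ P_{\sigma^2(u,r)} f_r }_{\ccc{\xi+\eta}}, \]
which holds because $g_r$ is $\eta$-H\"older with constant bounded by $\absv{x-y}^\xi \norm{P_{\sigma^2(u,r)} f_r}_{\ccc{\xi+\eta}}$. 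Applying Besov heat-semigroup smoothing $\norm{P_t f_r}_{\ccc{\xi+\eta}} \ls t^{-(\xi+\eta-\alpha)/2} \norm{f_r}_{\ccc{\alpha}}$, using $\absv{\phi_{su}}^\eta \leq \omega_\phi(s,t)^{\eta\beta}$, and a final H\"older inequality in $r \in [u,t]$ (with $f \in L^p_t \ccc{\alpha}$) then yield the claimed bound, with time exponent matching $\gamma$ from the theorem.

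\textbf{Martingale term.} I use the two-sided Brownian representation \eqref{eq:mandelbrot} and apply the Clark-Ocone formula to $\psi_r := f_r(\wh_r + \phi_s + x) - f_r(\wh_r + \phi_s + y)$, which is $\FF_r$-measurable. Since $\phi_s \in \FF_s \subset \FF_z$ for $z \in [s,u]$, we have $D_z \phi_s = 0$ and hence $D_z \psi_r = [\nabla f_r(\wh_r + \phi_s + x) - \nabla f_r(\wh_r + \phi_s + y)] k(r,z)$. Clark-Ocone yields $(\EE_s - \EE_u) \psi_r = -\int_s^u \EE_z[D_z \psi_r] dB_z$, and a stochastic Fubini gives $M^{x,y}_{sut} = -\int_s^u \bar J^{s,u,t}_z dB_z$ with $\bar J^{s,u,t}_z := \int_u^t \EE_z[D_z \psi_r] dr$. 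Applying $\EE_z$ once more reveals a heat semigroup $P_{\sigma^2(z,r)}$ acting on $\nabla f_r$, and the $(x,y)$-difference is bounded by $\absv{x-y}^\xi \norm{P_{\sigma^2(z,r)} f_r}_{\ccc{1+\xi}} \ls \absv{x-y}^\xi (r-z)^{-H(1+\xi-\alpha)} \norm{f_r}_{\ccc{\alpha}}$. Combining with $\absv{k(r,z)} \ls (r-z)^{H-1/2}$ and a H\"older inequality in $r$, this produces $\absv{\bar J^{s,u,t}_z} \leq C \absv{x-y}^\xi (t-s)^{1/2 + H(\alpha-\xi) - 1/p} \nofn$, with the $r$-integrability guaranteed exactly by the hypothesis $\epsilon > 0$.

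\textbf{Main obstacle.} The most delicate step is the exponent bookkeeping in the drift bound: the two-parameter H\"older interpolation must combine $\xi \in (0,1)$ in the spatial $(x,y)$ variable with $\eta \in [0,1]$ in the $\phi_s$-to-$\phi_u$ direction, in such a way that the heat-smoothing cost $(r-u)^{-H(\xi+\eta-\alpha)}$ integrates against $\norm{f_r}_{\ccc{\alpha}}$ to produce the correct time exponent and that $\absv{\phi_{su}}^\eta$ yields exactly the $\omega_\phi(s,t)^{\eta\beta}$ factor. The martingale side is more mechanical but crucially uses the $\FF_s$-measurability of $\phi_s$ (to set $D_z \phi_s = 0$ for $z \geq s$), without which the Malliavin derivative calculation would not close.
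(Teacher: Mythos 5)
Your proposal is correct and follows essentially the same route as the paper: the same split of $\delta_u A^{x,y}_{st}$ into the $\EE_u$-conditioned double-difference term and the $(\EE_s-\EE_u)$ term rewritten via Clark--Ocone and stochastic Fubini, the same two-parameter H\"older interpolation (Proposition \ref{prop:hoelderdiff}) combined with heat-kernel smoothing and H\"older in time for $I^{x,y}_{sut}$, and the same kernel bound $(r-z)^{H(\alpha-\xi)-1/2}$ for $\bar J^{s,u,t}_z$. Note that your drift exponent $1-\tfrac{1}{p}+H(\alpha-\xi-\eta)=\gamma$ agrees with the paper's proof; the exponent $1+H(\alpha+\eta-\xi)-\tfrac{1}{p}$ printed in the lemma statement is a sign slip on $\eta$.
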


\begin{proof}
Computations below follow by Clark-Ocone formula and stochastic Fubini theorem. Recall the decomposition $\wh_r = \left( \wh_r - \EE_{\FF_s} \wh_r \right) + \EE_{\FF_s} \wh_r$ and for $f$ as assumed:

\[ \EE_s f(\wh_r+ \phi_s + x) = (P_{\sigma^2(s,r)} f)(\EE_s \wh_r + \phi_s + x) \]

First let us specify how we apply \eqref{eq:malliavinfdx} to our use case. Let \bluepassage{$X$ be the two-index map

\[ X_{vt} = \int_v^t \EE_v f_r(W^H_r + \phi_v + x) dr \]
}and pick $z < v$. Then $\EE_z \EE_v = \EE_z$ by tower property and 

\begin{equation}\label{eq:clarkocone}
 \EE_z D_z X_{vt} = \int_v^t (P_{\sigma^2(z,r)} \nabla f_r)(\EE_z \wh_r + \phi_s + x) (r-z)^{H-1/2} dr 
\end{equation}

We combine \eqref{eq:clarkocone} with the stochastic Fubini theorem: $\int_X \int_0^t \psi(s,x) d\nois_s dx = \int_0^t \int_X \psi(s,x) dx d\nois_s$ for some $\psi$ jointly measurable and adapted (see \cite{Ver11Fubini} for a self-contained proof).

\begin{align*}
\absv{x-y}^{\xi} \delta_u A^{x,y}_{st}[f]  =&  \int_u^t \EE_s f_r(\wh_r + \phi_s + x) dr - \int_u^t \EE_s f_r(\wh_r + \phi_s + y) dr \\
& - \int_u^t \EE_u f_r(\wh_r + \phi_u + x) dr + \int_u^t \EE_u f_r(\wh_r + \phi_u + y) dr \\
= & \int_u^t \EE_u g_r(x,y,\phi_s,\phi_u) dr + \bluepassage{\int_s^u \EE_z D_z \left( \int_u^t f_r(\wh_r+\phi_s+x) - f_r(\wh_r+\phi_s+y) dr\right) d\nois_z } \\ & \redpassage{\int_u^t \int_s^u \EE_z D_z \left( f_r(\wh_r + \phi_s + x) - f_r(\wh_r + \phi_s + y) \right) dr d\nois_z } \\ & =: I^{x,y}_{ut} + M^{x,y}_{ut} 
\end{align*}

with 
\begin{equation}\label{eq:grhelp}
g^{\wh}_r(x,y,a,b) = f_r(\wh_r + x + a) - f_r(\wh_r + y + a) - f_r(\wh_r + b +x) + f_r(\wh_r + b + y)
\end{equation}

Let us begin from the first term. Applying Proposition \ref{prop:hoelderdiff} to $g_r$ we obtain that 

$$\forall\ (x,y,a,b)\ \absv{ g_r } \leq \absv{ x- y}^{\xi} \absv{ a-b}^{\eta} \norm{ f_r }_{\ccc{\xi+\eta}} $$

We then apply expected value and applying estimate above directly to the function $P_{\sigma^2(u,r)} g^{\EE_u \wh_r}_r$:

\[ \EE_u g^{\wh}_r = (P_{\sigma^2(u,r)} g^{\EE_u \wh_r}_r)(x,y,\phi_s, \phi_u) \leq \absv{ x- y}^{\xi} \absv{ \phi_{su}}^{\eta}  \norm{ P_{\sigma^2(u,r)} f_r }_{\ccc{\xi+\eta}} \]

Now it follows by local non-determinism and heat kernel estimates (Prop. \ref{prop:heatkernel}) 
\begin{align*}
I^{x,y}_{ut}  \leq  \absv{x-y}^{\xi} \absv{ \phi_{su} }^{\eta} \int_u^t \norm{ P_{\sigma^2(u,r)} f_r }_{\ccc{\xi+\eta}} dr \leq \absv{x-y}^{\xi} \absv{ \phi_{su} }^{\eta}  (t-u)^{1+H(\alpha-\xi-\eta) -\inv{p}} \nofn
\end{align*}

It shows the announced bound for the first term, taking $\gamma = 1 + H(\alpha-\xi-\eta)- \inv{p}$ and $\absv{ \phi_{su} }^{\eta} \leq w_{\phi}(s,u)^{\eta \beta}$. For the second term we can immediately apply BDG inequality. First set $\bar{f}_r = f_r(\cdot+x) - f_r(\cdot+y)$ and by \eqref{eq:clarkocone} $\bar{J}_z^{s,u,t} = \int_u^t P_{\sigma^2(z,r)} \nabla \bar{f}(\EE_z \wh_r + \phi_s) \bluepassage{(r-z)^{H-1/2}} dr$ (we suppress dependence on $x,y$ in notation for brevity).  Again using Proposition \ref{prop:heatkernel} it follows that, \bluepassage{for $\xi \in (0,1)$ and using $z < u$}:

\bluepassage{
\begin{equation}\label{eq:barjsut}
\begin{aligned}
\absv{ \bar{J}^{s,u,t}_z } \leq & \int_u^t \norm{ P_{\sigma^2(z,r)} \nabla \bar{f}_r}_{L^{\infty}} (r-z)^{H-1/2} dr \\
\leq & \absv{x-y}^{\xi} \int_u^t \norm{ P_{\sigma^2(z,r)} \nabla f_r}_{\ccc{\xi}} (r-z)^{H-1/2}  dr \\
\leq & \absv{x-y}^{\xi} \int_u^t (z-r)^{H(\alpha-\xi)-1/2} \norm{ f_r }_{\ccc{\alpha}_x} dr \\
\leq & \absv{x-y}^{\xi} \absv{s-t}^{H(\alpha-\xi)+1/2 - \inv{p}} \norm{ f }_{L^p_t\ccc{\alpha}_x} 
\end{aligned}
\end{equation}
}

where we used $H(\alpha-\xi)>1/2, z > s$ and the H\"older inequality in the last line.

\redpassage{
\begin{align*}
\absv{ \bar{J}_z^{s,u,t}  } \leq & \int_u^t \norm{ P_{\sigma^2(u,r)} \nabla \bar{f} }_{L^{\infty}} (r-u)^{H-1/2} dr \\
\leq & \int_u^t \absv{ x- y}^{\xi} \norm{ P_{\sigma^2(u,r)} \nabla f }_{\ccc{\xi}} (r-u)^{H-1/2} dr \\
\leq &\absv{ x- y}^{\xi}  \int_u^t (r-u)^{H(\alpha-\xi) - 1/2} \norm{ f_r }_{\ccc{\alpha}_x} dr \\
\leq & \absv{ x- y}^{\xi} (t-u)^{1/2+H(\alpha-\xi)-\inv{p}} \nofn
\end{align*}
}

In the end we obtain \bluepassage{with $C_p \simeq p^{p/2}$ for $p$ large enough}

$$\EE \left[ \absv{ M^{x}_{ut} - M^y_{ut} }_p \right] \ls C_p \EE \left[ \left( \int_s^t \absv{ \bar{J}^{s,u,t} }^2 dz \right)^{p/2} \right] \ls C_p \absv{x-y}^{p \xi} (t-s)^{p\left(1+H(\alpha-\xi) - \inv{p} \right) }  $$

which in turn shows the claim about the second term. 

\redpassage{
To show desired estimates for $\nabla f$, we note that for smooth $f$ there is $\nabla A_{st}[f] = A_{st}[\nabla f]$, moreover if $f \in \ccc{\alpha}$, then $\nabla f \in \ccc{\alpha-1}$ in a weak sense. Then for any $\alpha$ we substitute $\alpha - 1$ and then, if we want to have $\xi > 1$, we take $\xi - 1$ instead of $\xi$.
}
\end{proof}

\bluepassage{
\begin{corollary}\label{cor:differentiable_fields}
Let $\xi > 1, k = \lfloor \xi \rfloor$. Then:

\[ \absv{  \nabla^k A_{st}[f](x) - \nabla^k A_{st}[f](y)  } \leq I^k + M^k \]

where \[ I^k \leq \absv{x-y}^{\xi-k} \nofn \absv{t-s}^{\inv{2} + H(\alpha - \xi) - \inv{p}} \] and $M^k = \int_s^t J_z d\nois_z$ with \[ \absv{ J_z^{sut}  } \leq \absv{x-y}^{\xi-k} \nofn \absv{t-s}^{\inv{2} + H(\alpha+\eta-\xi) - \inv{p}} w_{\phi}(s,t)^{\eta \beta}  \]
\end{corollary}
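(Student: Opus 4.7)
The plan is to reduce the Corollary to Lemma \ref{lem:asutdecomp} by shifting the regularity indices. The key observation is that for smooth bounded $f$, the germ admits the representation
\[ A_{st}[f](x) = \int_s^t (P_{\sigma^2(s,r)} f_r)(W^{H,2}_{sr} + \phi_s + x)\, dr, \]
and since both the heat semigroup $P_{\sigma^2(s,r)}$ and the spatial translation $\tau^{y}$ commute with $\nabla$, spatial differentiation in $x$ passes under the integral sign, giving
\[ \nabla^k A_{st}[f](x) = A_{st}[\nabla^k f](x). \]
In particular, it suffices to prove the decomposition for the germ $A_{st}[g]$ with $g := \nabla^k f$, taken at Hölder exponent $\xi' := \xi - k \in (0,1)$.

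Setting $\alpha' := \alpha - k$, one verifies that the assumption $\alpha > \xi - \tfrac{1}{2H} + \tfrac{1}{pH}$ on $(\alpha, \xi)$ translates directly into $\alpha' > \xi' - \tfrac{1}{2H} + \tfrac{1}{pH}$ on $(\alpha', \xi')$, since the same constant $k$ is subtracted from both sides. Moreover, since $f \in L^\infty_t C^{\infty,b}$, so is $g = \nabla^k f$, and the Besov-Hölder embedding yields $\|g\|_{L^p_t C^{\alpha'}_x} \lesssim \|f\|_{L^p_t C^{\alpha}_x}$. We can then apply Lemma \ref{lem:asutdecomp} to $g$ with exponent $\xi'$. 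The resulting decomposition
\[ \delta_u \big( A_{st}[g](x) - A_{st}[g](y) \big) = I^{x,y}_{sut} + M^{x,y}_{sut} \]
produces a drift part $I^{x,y}$ bounded by $|x-y|^{\xi'}\|f\|_{L^p_t C^{\alpha}_x}(t-s)^{1+H(\alpha'+\eta-\xi')-1/p}\,\omega_\phi(s,t)^{\eta\beta}$ and a martingale $M^{x,y}$ whose integrand $J_z^{sut}$ is bounded by $|x-y|^{\xi'}\|f\|_{L^p_t C^{\alpha}_x}(t-s)^{1/2+H(\alpha'-\xi')-1/p}$, which upon replacing $\alpha',\xi'$ by $\alpha-k,\xi-k$ and simplifying $H(\alpha'-\xi')=H(\alpha-\xi)$ (and similarly for the drift exponent) gives exactly the $I^k, M^k$ bounds claimed.

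The main nontrivial step is the commutation $\nabla^k A_{st}[f] = A_{st}[\nabla^k f]$. Within the hypothesis of Lemma \ref{lem:asutdecomp} (namely $f\in L_t^\infty C^{\infty,b}_x$) this is immediate from the heat-kernel representation and dominated convergence, so the corollary follows directly. The more delicate version, where one wants to apply the estimate to distributional $f \in L_t^p C^\alpha_x$ with $\alpha < k$ so that $\nabla^k f$ is only a Schwartz distribution, is handled by a density argument: one regularises $f$ by $f^n := f \ast \rho_{1/n}$, applies the commutation pointwise for $f^n$, and uses Proposition \ref{prop:ConvAF} together with the uniform Hölder estimate just derived to pass to the limit in $\mathcal{C}^0_t \mathcal{S}'(\mathbb{R}^d)$. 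The only potential obstacle is checking that the $J_z$ integrands associated to $f^n$ converge, but this follows from the explicit Clark-Ocone expression $J_z^{sut}=\int_u^t(P_{\sigma^2(z,r)}\nabla^{k+1}f_r)(\mathbb{E}_z W^H_r+\phi_s+\cdot)(r-z)^{H-1/2}\,dr$ together with the heat-kernel smoothing that transforms $\nabla^{k+1}f^n \to \nabla^{k+1}f$ in the sense of distributions into genuine convergence of the integrand.
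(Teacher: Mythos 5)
Your proof is correct and follows essentially the same route as the paper: commute $\nabla^k$ with the germ so that $\nabla^k A_{st}[f]=A_{st}[\nabla^k f]$, then apply Lemma \ref{lem:asutdecomp} with the shifted indices $\alpha-k$, $\xi-k$, noting that the constraint $\alpha>\xi-\tfrac{1}{2H}+\tfrac{1}{pH}$ and the exponents are invariant under this shift. The closing density argument for distributional $f$ is superfluous here, since the corollary is only invoked for $f\in L^\infty_t\ccc{\infty,b}_x$ and the passage to general $f$ is carried out later in the proof of Theorem \ref{thm:continuity}, but it does no harm.
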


\begin{proof}
For smooth $f$ there holds $\nabla A_{st}[f] = A_{st}[\nabla f]$. Moreover, we have that $f \in \ccc{\alpha}$ implies $\nabla^k f \in \ccc{\alpha-k}$. Therefore in the proof above we substitute $f$ with $\nabla^k f$ and the desired estimates follow directly.
\end{proof}

}

The following lemma is then an application of stochastic sewing lemma techniques to the obtained estimates. 
%
%
%

\begin{lemma}\label{lem:aacsewn}
Let $f \in L_t^{\infty}\ccc{\infty,b}_x$. \bluepassage{For any $x \in \RR^d$, } there exists a unique process $t \mapsto \AAC_t$, constructed as:

\[ \AAC_t(x) = \lim_{\absv{\PC} \to 0} \sum_{[u,v] \in \PC} A[f]_{uv}(x)  \]

where $\PC$ is an arbitrary partition, $A$ is defined as \eqref{eq:bblock}, the limit \bluepassage{holds in $L^m(\Omega)$ for arbitrary $m\geq 2$ and } does not depend on the choice of partition, and we have the following estimates (for $\xi, \alpha, \gamma, \eta, \delta$ as in Theorem \ref{thm:continuity}):

\bluepassage{
\begin{equation}\label{eq:sewingbd}
\absv{ \AAC_{st}(x) - \AAC_{st}(y) - A_{st}[ f](x) - A_{st}[ f](y) } \leq \absv{x-y}^{\xi} \nofn (t-s)^{\gamma} w_{\phi}(s,t)^{\eta \beta} + U^{x,y}_{st} 
\end{equation}
}

\redpassage{
\begin{equation}\label{eq:sewingbd}
\absv{ \AAC_{st}(x) - \AAC_{st}(y) - A_{st}[\nabla f](x) - A_{st}[\nabla f](y) } \leq \absv{x-y}^{\xi-1} \nofn (t-s)^{\gamma} w_{\phi}(s,t)^{\eta \beta} + U^{x,y}_{st} 
\end{equation}
}

where $U^{x,y}_{st}$ satifies 
\begin{equation}\label{eq:triangle}
\forall\ x\neq y,\ \forall\ s<v<t,\;\; \absv{U^{x,y}_{st} } \leq \absv{ U^{x,y}_{sv} } + \absv{ U^{x,y}_{vt} }
\end{equation}

\begin{equation}\label{eq:moments}
 \norm{ U^{x,y}_{st} }_m \leq C_m \nofn \absv{x-y}^{\xi} \absv{t-s}^{\delta'} 
\end{equation}

\bluepassage{with $C_m \sim m^{1/2}$ for $m \to \infty$ and $\delta' > \delta$}

\end{lemma}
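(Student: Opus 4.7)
\emph{Proof plan.} The strategy is to apply a variant of the stochastic sewing lemma allowing for random controls, specifically Theorem 4.7 of \cite{ABLM21}, to the germ $A_{st}[f]$ defined in \eqref{eq:bblock}. For fixed $x, y \in \RR^d$ with $x \neq y$, set $G_{st} := A_{st}[f](x) - A_{st}[f](y)$. Lemma \ref{lem:asutdecomp} (together with Corollary \ref{cor:differentiable_fields} for $\xi>1$) provides a decomposition $\delta_u G_{st} = I^{x,y}_{sut} + M^{x,y}_{sut}$ in which $|I^{x,y}_{sut}| \lesssim |x-y|^\xi \nofn (t-s)^\gamma w_\phi(s,t)^{\eta\beta}$ is super-additive in $(s,t)$ with exponent $\gamma + \eta\beta > 1$ by \eqref{cond:sewing}, while $\EE_s M^{x,y}_{sut} = 0$ and $\|M^{x,y}_{sut}\|_m \lesssim C_m |x-y|^\xi \nofn (t-s)^{1/2+\varepsilon}$ for some $\varepsilon>0$ by \eqref{cond:mgale}. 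These are exactly the ingredients needed to run the stochastic sewing procedure with a random control.

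The sewing lemma then yields, in $L^m$ for every $m\geq 2$, existence of the limit $\AAC_t(x) = \lim_{|\PC|\to 0} \sum_{[u,v]\in \PC} A_{uv}[f](x)$, independent of the partition sequence. Applied to the difference germ $G_{st}$, the sewing remainder estimate produces
\[ \left| \AAC_{st}(x) - \AAC_{st}(y) - G_{st} \right| \leq C \absv{x-y}^\xi \nofn (t-s)^\gamma w_\phi(s,t)^{\eta\beta} + U^{x,y}_{st}, \]
where the first term arises from aggregating the $I^{x,y}$-contributions along dyadic refinements (using super-additivity of $(t-s)^\gamma w_\phi(s,t)^{\eta\beta}$), while $U^{x,y}_{st}$ is the accumulation of martingale increments collected during the sewing procedure.

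The subadditivity property \eqref{eq:triangle} is inherited from this construction: concatenating the sewing procedures on $[s,v]$ and $[v,t]$ to obtain that on $[s,t]$ forces $U^{x,y}$ to decompose additively across the intermediate point, yielding \eqref{eq:triangle}. For the moment estimate \eqref{eq:moments}, one reads it directly from the conclusion of the sewing lemma: $\|U^{x,y}_{st}\|_m \lesssim C_m |x-y|^\xi \nofn (t-s)^{1/2+\varepsilon}$, with $C_m\sim\sqrt m$ inherited from the Burkholder-Davis-Gundy inequality applied to the martingale sum. Taking $\delta' := 1/2+\varepsilon$ yields $\delta' > \delta$ for any $\delta\in(1/2, 1/2+\varepsilon)$, as required.

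The principal obstacle is the randomness of the control $w_\phi$: one cannot appeal directly to the classical sewing lemma of \cite{Le20}, whose hypotheses require a deterministic control, and must instead invoke the random-control variant, which crucially exploits the adaptedness of $w_\phi$ hypothesised in Theorem \ref{thm:continuity}. A secondary subtlety is ensuring uniformity in $(x,y)$, but this is already built into the bounds provided by Lemma \ref{lem:asutdecomp}, which work directly at the level of the difference germ $G_{st}$, so no additional estimate on joint regularity is needed.
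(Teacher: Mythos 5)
Your overall route is the paper's: feed the decomposition of Lemma \ref{lem:asutdecomp} (with Corollary \ref{cor:differentiable_fields} for the differentiable case) for the difference germ into a stochastic sewing argument with a random, adapted control; the paper implements this by hand (dyadic partitions in Lemma \ref{lem:existence}, arbitrary partitions in Lemma \ref{lem:existall}) rather than quoting \cite{ABLM21} as a black box, but the existence of the $L^m$-limit independent of the partition, the bound of the remainder by the control term plus a martingale-type term, and the moment estimate with $C_m \sim m^{1/2}$ and $\delta' = \tfrac12 + \epsilon > \delta$ come out exactly as you describe.

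The genuine gap is your justification of \eqref{eq:triangle}. The remainder that the sewing construction actually produces is the accumulation of the martingale increments over the successive refinements of $[s,t]$, in the paper $B^{x,y}_{st} = \sum_{k \geq 0} \absv{J^k_{st}(x) - J^k_{st}(y)}$, and this object does \emph{not} satisfy the triangle inequality across an arbitrary intermediate point $v$: the refinements of $[s,t]$ are not concatenations of refinements of $[s,v]$ and $[v,t]$ (the dyadic grids do not nest through $v$, and the germs are anchored at different left endpoints through $\EE_s$ and $\phi_s$), and in any case the remainders on $[s,v]$, $[v,t]$, $[s,t]$ differ by the extra term $\delta_v G_{st}$, so ``concatenating the sewing procedures'' does not force an additive decomposition of $U$. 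The paper states this explicitly: the $B$ obtained from the construction need not satisfy \eqref{eq:triangle}. The missing ingredient is the post-processing step of Lemma \ref{lem:aacpathwise}: one \emph{defines} $U^{x,y}_{st}$ as the positive part of the difference between the quantity being estimated and the control term $\absv{x-y}^{\xi} \nofn (t-s)^{\gamma} w_{\phi}(s,t)^{\eta\beta}$. Then $\absv{U^{x,y}_{st}} \leq \absv{B^{x,y}_{st}}$ pointwise, so \eqref{eq:moments} is inherited, while \eqref{eq:triangle} follows from the additivity in $t$ of the increments of $t \mapsto \AAC_t(x) - \AAC_t(y)$ together with the superadditivity of $(s,t) \mapsto (t-s)^{\gamma} w_{\phi}(s,t)^{\eta\beta}$ (a control since $\gamma + \eta\beta > 1$). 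This step cannot be skipped: the chaining argument (Lemma \ref{lem:chain}) in the proof of Theorem \ref{thm:continuity} upgrades the moment bounds on $U$ to almost sure bounds using precisely \eqref{eq:triangle}, together with the pointwise domination of $U$ by the explicit stochastic integrals $J^k$ from \eqref{eq:jstk} --- which is also a reason to keep the martingale part of the sewing remainder explicit rather than invoking the sewing lemma purely as a black box.
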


\begin{proof}
Existence of the limit follows from Lemmas \ref{lem:existence} (dyadic partition) and then \ref{lem:existall} for arbitrary partition. The limit exists as a random field $(t,x) \mapsto \AAC_t(x)$ in $L^m, m \geq 2$, which enjoys the following bound:

\[ \absv{ \AAC^{x,y}_{st} - A_{st} } \leq \bluepassage{\absv{x-y}^{\xi}} w_{\phi}(s,t)^{\eta \beta} (t-s)^{\gamma} + B^{x,y}_{st} \]

$B$ obtained in this way does not necessarily satisfy \eqref{eq:triangle}, however by Lemma \ref{lem:aacpathwise} there exists a map $U^{x,y}_{st}$ such that $\absv{ U^{x,y}_{st} } \leq \absv{ B^{x,y}_{st} }$ and $B$ in the bound above can be replaced by $U$. As $B$ satisfies \eqref{eq:moments} it follows that it is the case for $U$ as well.

\end{proof}

With the help of a suitable chaining argument we will apply the construction of $\AAC$ with the above Lemma \ref{lem:aacpathwise} to prove Theorem \ref{thm:continuity}. At this point it is not clear that the limit path $\AAC$ coincides with $T^{\wh+\phi} f_t(x) = \int_0^t f_r(\wh_r + \phi_r + x) dr$. The following lemma clears this confusion.

\begin{lemma}\label{lem:ident}
Let $\AAC$ be the limiting path obtained in Lemma \ref{lem:aacsewn} for $f \in L_t^{\infty}C_x^{\infty,b}$. Then the limit is identical to the classical integral.

\[ \forall\;(t,x) \in \RR_+ \times \RR^d\;\; \AAC[f](x)_t = \int_0^t f_r(\wh_r+\phi_r+x) dr \]

\end{lemma}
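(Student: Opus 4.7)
The plan is to identify the sewing limit $\AAC_t(x)$ with the pathwise integral $\tilde{\AAC}_t(x) := \int_0^t f_r(\wh_r+\phi_r+x)\,dr$, which is a bona fide Lebesgue integral since the integrand is jointly measurable and bounded (as $f \in L^\infty_t C^{\infty,b}_x$). Lemma \ref{lem:aacsewn} gives $\AAC_t(x) = \lim_{|\PC|\to 0} \sum_{[u,v] \in \PC} A_{uv}(x)$ in $L^m$, and trivially $\tilde{\AAC}_t(x) = \sum_{[u,v]\in\PC} \int_u^v f_r(\wh_r+\phi_r+x)\,dr$ for every partition $\PC$ of $[0,t]$. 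It therefore suffices to show that
\[ R(\PC) := \sum_{[u,v]\in\PC} \Bigl( A_{uv}(x) - \int_u^v f_r(\wh_r+\phi_r+x)\,dr \Bigr) \longrightarrow 0 \quad \text{in } L^2(\Omega) \text{ as } |\PC|\to 0. \]

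For each subinterval I would split
\[ A_{uv}(x) - \int_u^v f_r(\wh_r+\phi_r+x)\,dr = I_1^{uv} + I_2^{uv}, \]
with $I_2^{uv} := \int_u^v \bigl(f_r(\wh_r+\phi_u+x) - f_r(\wh_r+\phi_r+x)\bigr)\,dr$ capturing the spatial shift of $\phi$, and $I_1^{uv} := \int_u^v \bigl(\EE_u f_r(\wh_r+\phi_u+x) - f_r(\wh_r+\phi_u+x)\bigr)\,dr$ the conditional-expectation fluctuation. The $I_2$ term is immediate: Lipschitz continuity of $f$ in space together with $|\phi_{ur}| \leq \omega_\phi(u,v)^\beta$ (using monotonicity of the control) yields $|I_2^{uv}| \lesssim (v-u)\,\omega_\phi(u,v)^\beta$, and hence $|\sum_{[u,v]} I_2^{uv}| \lesssim T \max_{[u,v]\in\PC} \omega_\phi(u,v)^\beta$, which tends to $0$ a.s. as $|\PC|\to 0$ by continuity of $\omega_\phi$.

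The main step is the control of $\sum I_1^{uv}$. I would first observe that $\EE_u I_1^{uv} = 0$, so by the tower property the family $\{I_1^{uv}\}_{[u,v]\in\PC}$ is pairwise orthogonal in $L^2$. Since $\phi_u$ is $\FF_u$-measurable, applying Clark-Ocone to $f_r(\wh_r+\phi_u+x)$ with $\phi_u$ frozen and combining with stochastic Fubini, exactly as in the proof of Lemma \ref{lem:asutdecomp}, represents
\[ I_1^{uv} = -\int_u^v \Bigl[ \int_z^v \EE_z \nabla f_r(\wh_r+\phi_u+x)\, k(r,z)\,dr \Bigr] d\nois_z. \]
The inner integral is bounded by $\|\nabla f\|_\infty \int_z^v k(r,z)\,dr \lesssim (v-z)^{H+1/2}$, so the Itô isometry gives $\|I_1^{uv}\|_{L^2}^2 \lesssim (v-u)^{2H+2}$, and orthogonality yields
\[ \Big\| \sum_{[u,v]\in\PC} I_1^{uv} \Big\|_{L^2}^2 = \sum_{[u,v]\in\PC} \|I_1^{uv}\|_{L^2}^2 \lesssim |\PC|^{2H+1}\, T \to 0. \]

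Combining the $I_1$ and $I_2$ bounds gives $R(\PC) \to 0$ in $L^2$, hence $\AAC_t(x) = \tilde{\AAC}_t(x)$ in $L^2$ and therefore almost surely. The only nontrivial step is the $I_1$ estimate, but since the Clark-Ocone/stochastic Fubini machinery has already been deployed in Lemma \ref{lem:asutdecomp}, the application here is more straightforward as we need only pointwise bounds (no $\xi$-Hölder increments in $x,y$) and the smoothness of $f$ makes all the manipulations legitimate without approximation.
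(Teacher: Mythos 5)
Your proof is correct and is essentially the paper's own argument: the same decomposition of germ minus true integral into a $\phi$-increment term bounded by $(v-u)\,\omega_{\phi}(u,v)^{\beta}$ and a term that is centered under $\EE_u$, followed by a partition sum in which the first part vanishes pathwise and the second vanishes in $L^2$ by orthogonality of martingale differences -- the paper merely packages this partition argument as its sewing-uniqueness Lemma \ref{lem:sewuniq} and feeds it the same two terms. Two small remarks: your Clark--Ocone/It\^o-isometry estimate for $I_1^{uv}$ is superfluous, since the trivial bound $|I_1^{uv}|\leq 2\|f\|_{\infty}(v-u)$ together with orthogonality already gives the needed decay (this is what the paper uses), and since the $I_2$-sum is only controlled almost surely, the combined convergence $R(\PC)\to 0$ holds in probability rather than in $L^2$, which is still sufficient to identify it with the $L^m$-limit from Lemma \ref{lem:aacsewn}.
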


\begin{proof}

We will use a uniqueness result for stochastic sewing. Fix $x$ and let 
 $\bar{\AAC}_{st} = \int_s^t f_r(\wh_r + \phi_r + \cdot)dr$, which we want to show to be equal to $\AAC_{st}$. It holds that
\begin{equation}\label{eq:twodifflim}
\bar{\AAC}_{st} - \AAC_{st}  =  (\bar{\AAC}_{st} - A_{st}) + ( A_{st} - \AAC_{st})
\end{equation}

Our goals is to show that the above can be further bounded by $I_{st} + M_{st}$, where $\absv{ I_{st} } \leq w(s,t)^{1+\eps}$ for some random control $w$ and $M_{st}$ is a sequence of martingale differences. Then we can apply Lemma \ref{lem:sewuniq}. The second pair in \eqref{eq:twodifflim} obviously satisfies the required bound by existence lemma \ref{lem:existence} - we have that ${\AAC}_{st} - A_{st} = {I}_{st} + {N}_{st}$. Observe that it suffices to take germ $A_{st}[f](x) = \int_s^t \EE_s f_r(\wh_r+\phi_r + x) dr$ rather than $A_{st}[f](x) - A_{st}[f](y)$ since by regularity of $f$ we will immediately have spatial regularity of the averaged field, i.e. $\absv{ \AAC[f](x) - \AAC[f](y) } \leq \norm{ \nabla f }_{\infty} \absv{x-y}$. for all $x,y$.

We intend to compare $A_{st}$ with $\AAC[f](x)$ as in the statement of the lemma. We then use the difference between $f(\wh_r + \phi_s + x)$ and $f(\wh_r+\phi_r+x)$ to make the term bounded with $w_{\phi}(s,t)$ appear.

\begin{align*}
\left( \bar{\AAC}_{st} - A_{st}\right) = &  I^{1}_{st} + I^{2}_{st} \\ := & \int_s^t \left( f_r(\wh_r + \phi_r + x) - f_r(\wh_r + \phi_s + x) \right) dr  +  \int_s^t\left( f_r(\wh_r + \phi_s + x) -  \EE_s  f_r(\wh_r + \phi_s + x) \right) dr.
\end{align*}

Then we use $I^1_{st}$ can be bounded using $f \in \ccc{1}$:

\[ \absv{ I^1_{st} } \leq \int_s^t \norm{ \nabla f_r }_{L^{\infty}} \absv{ \phi_{sr} } dr \leq \absv{ \phi_{st}} \int_s^t \norm{ \nabla f_r }_{L^{\infty}} dr \leq (t-s) w_{\phi}(s,t) \norm{ f }_{\ccc{1}} \]

Clearly, this is of the form $w(s,t)^{1+\eps}$ for a certain control $w$.

We have therefore the first part of desired statement. For $I^{2}_{st}$, it is clear that it satisfies $\EE_s I^{2}_{st} = 0$, and since $f$ is bounded, it also holds that $| I^{2}_{st}  | \leq C (t-s)$. We can therefore apply Lemma \ref{lem:sewuniq} to conclude that $ \bar{\AAC}_{st} =  {\AAC}_{st}$ a.s. for all $s<t$.
\end{proof}

We now have all the necessary ingredients to finish the proof of the regularity properties of $T^{\wh+\phi} f$.

\begin{proof}[Proof of Theorem \ref{thm:continuity}]
Take $f \in L_t^{\infty}\ccc{\infty,b}_x$ - this assumption will be removed at the end of the proof. The well-posedness of $\AAC$ follows from Lemma \ref{lem:aacsewn}, and using Lemma \ref{lem:aacpathwise} we conclude that there exists a random map $U: \Delta^2 \times \RR^d \times \Omega \mapsto \RR^d$ such that:

$$\absv{ \AAC_{st}(x) - \AAC_{st}(y) } \leq w(s,t)^{1+\eps} + U^{x,y}_{st} $$

and $(s,t) \mapsto U^{x,y}_{st}$ satisfies property \eqref{eq:triangle}. By construction of $\AAC$ and \eqref{eq:yupb} we have that $U^{x,y}_{st} \leq \sum_{k \geq 0} \absv{ J^k(x)_{st} - J^k(y)_{st} }$, where $J^k(x)_{st} = \int_s^t h^k(x)_z dW_z$, where $h$ is defined in \eqref{eq:jstk}. Therefore $\norm{ U^{x,y}_{st} }_m \leq \sqrt{m} \nofn (t-s)^{\delta} (x-y)^{\xi}, \delta > 1/2$ and we can use Lemma \ref{lem:chain} to conclude that there exists a random constant $C_{\delta}$ such that $\absv{ U^{x,y}_{st} } \leq C_{\delta} \absv{t-s}^{\delta} \absv{ x-y }^{\xi}$ and $\norm{C_{\delta}}_q \leq  C q^{1/2} \nofn$ for some determinisitic constant $C$ independent of $\delta$. Then it follows by a standard Taylor expansion in conjuction with Stirling approximation that there exist $\bar{\lambda} > 0$ such that $\EE \left[ \exp\left( \bar{\lambda} \frac{ C_{\delta}^2 }{ \nofn^2 } \right) \right] < \infty$.

Therefore we have the following bound for smooth $f$, taking any finite $R>0$:

\[\sup_{\substack{x \neq y \\ \absv{x} + \absv{y} < R}} \frac{\absv{ \AAC_{st}[f](x) - \AAC_{st}[f](y) }}{\absv{x-y}^{\xi}} \leq c (t-s)^{\gamma} \omega_{\phi}(s,t)^{\eta \beta} + C_{\delta,R} (t-s)^{\delta} \]

By Lemma \ref{lem:ident} we show that for $f$ smooth the limit of sewing procedure is $\AAC[f](x) = \int_0^t f_r(\wh_r+\phi_r+ x) dr$, so that it is exactly the averaged field. Take now arbitrary $f \in L_t^q \ccc{\alpha}_x$, and let $f^n$ be smooth and converging to $f$ in $f \in L_t^{q} \ccc{\alpha-\epsilon}_x$, for any $\epsilon>0$. Each of $T^{W+\phi} f_n$ satisfies the above bound with the same $c$ and some r.v. $C_{\delta,R}^n$. Then it follows from Lemma \ref{prop:ConvAF} that this also holds for $f$, with $C_{\delta,R}= \liminf_n C_{\delta,R}^n$, and Gaussian integrability of $C_{\delta,R}$ is immediate from Fatou's lemma.

\end{proof}

%
%

\section{Fixed point equations}\label{sec:FixedPointEqs}

In this section we show how the regularity obtained in the previous Lemma allows to obtain strong existence and path-by-path uniqueness for a certain class of fixed point equations, which will take the following form :
\begin{equation} \label{eq:FP}
\forall t \in [0,T], \;\;\;\; X_t = \Gamma \left( x_0 + \int_0^{\cdot} f(s,X_s) ds + W^H \right)(t),
\end{equation}
where $X$ takes values in $\RR^d$, $W^H$ is a ($d$-dimensional) fBm, and $\Gamma$ is a map from $C([0,T],\RR^d)$ to itself. 

\begin{assumption}  \label{asn:Gamma}

Assume that $\Gamma$ is adapted in the sense that for any $t\in [0,T]$, $\Gamma(w)_t$ only depends on $w_{[0,t]}$. Also assume that it is Lipschitz in $p$-variation spaces, namely
\begin{equation} \label{eq:LipGamma}
\forall p \geq 1, \;\; \exists C_P>0 \mbox{ s.t. } \forall y,y ' \in \vvv{p}_{[0,T]}, \;\;\; \left\| \Gamma(y) - \Gamma(y') \right\|_{p-var,[0,T]} \leq C_P \left(  \left\| y - y' \right\|_{p-var,[0,T]} + |y(0) - y'(0)| \right),
\end{equation}

and bounded in H\"older spaces, namely 
\begin{equation} \label{eq:HolderGamma}
\forall \gamma \in (0,1) , \;\; \exists C>0 \mbox{ s.t. } 
\left\| \Gamma(w)
\right\|_{C^\gamma([0,T])} \leq C\left\| w
\right\|_{C^\gamma([0,T])} ,
\end{equation}
and also that letting $C(W):=\Gamma(W) - W$, this has sufficiently regular paths in $q$-variation, at least for fractional Brownian paths :
\begin{equation} \label{eq:asnGammaW}
\exists \; \beta \in (H,1], \eta \in (1/2,1]  \mbox{ with } \eta \beta > \frac{1}{2},  \mbox{ s.t. } \;\;\EE \left[ \exp \left(\left\|C(W^H) \right\|_{1/\beta-var;[0,T]}^{2 \eta} \right)\right] < \infty.
\end{equation}
\end{assumption}

\begin{remark} \label{rmk:TailsH12}
In the case when $H \geq \frac{1}{2}$,  \eqref{eq:asnGammaW} can be replaced by
\begin{equation} \label{eq:asnGammaH12}
\exists \; \beta \in (H,1]  \mbox{ s.t. } \forall x_0 \in \RR^d,\;\left\|C(W^H) \right\|_{1/\beta-var;[0,T]} < \infty \mbox{ a.s.}.
\end{equation}
Indeed, since $\mathcal{H}^{H,T} \subset \vvv{1} \subset \vvv{1/\beta}$ in that case, it follows from a standard Gaussian concentration argument (see Proposition \ref{prop:GaussConc} below) that, under \eqref{eq:LipGamma},  Gaussian tails automatically hold for $\left\|C(W^H) \right\|_{1/\beta-var;[0,T]}$ (as long as it is a.s. finite), so that we do not lose generality by assuming \eqref{eq:asnGammaW}.
\end{remark}

Before stating our theorems, let us define our notions of solutions.

\begin{definition}\label{def:solution}
(1) Let $f \in L^p_t\ccax$ be a distributional drift, and $w \in C([0,T],\RR^d)$ be such that $T^{w} f \in \vvv{r}_t\ccc{1}_x$, for some $r<2$. Given $x_0 \in \RR^d$, we say that $x$ is a solution to

\begin{equation} \label{eq:FPw}
x = \Gamma\left(x_0+ \int_0^{\cdot} f(s,x_s) ds + w \right) 
\end{equation}

if and only if $ \theta := x  -w $ is in $ \vvv{q}$, for some $q< 2$,  and satisfies

\begin{equation}\label{eq:nyoungref}
\theta = \Gamma\left( x_0+  \int_0^{\cdot} (T^{w} f)( ds, \theta_s) + w \right) - w 
\end{equation}
where the integral above is a nonlinear Young integral (which is well-defined given the assumptions on $\theta$ and $T^{w} f$).

(2) We say that path-by-path uniqueness holds for \eqref{eq:FP} if, for $\PP^H$ a.e. fractional Brownian path $w$, there exists at most one solution to \eqref{eq:FPw}.

(3) A strong solution to \eqref{eq:FP} is a stochastic process $(X_t)_{t \geq 0}$, adapted w.r.t. the natural augmented filtration of $\wh$, and such that a.s., $X_t$ is a solution to \eqref{eq:FPw} associated to the path $\wh$.
\end{definition}

Before stating our main results, we first remark that, as in \cite{CG16}, if the driving vector field is sufficiently regular, we can solve directly our equation as a fixed point involving Young integration (in that case only \eqref{eq:LipGamma} is needed as we can simply use the regularizing properties of $W$).

\begin{proposition}
Assume that $\Gamma$ satisfies \eqref{eq:LipGamma} and that $f \in L^p_t C^\alpha_x$, where
\[ \alpha > 2 - \frac{1}{2H} + \frac{1}{pH}. \]
Then path-by-path uniqueness holds for \eqref{eq:FP}.
\end{proposition}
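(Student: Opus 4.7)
The plan is to exploit the fact that, under the stated regularity on $\alpha$, the averaged field $T^{W^H} f$ has enough joint temporal-spatial Hölder regularity for the linearisation Lemma \ref{lem:linearisation} to be applicable. The difference of two solutions will then satisfy a linear Young equation driven by a Hölder path, and uniqueness will follow from a Gronwall-type contraction on small intervals iterated along a partition of $[0,T]$.

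First I would fix $\omega$ in a probability-one event on which the conclusion of Theorem \ref{thm:continuity} holds, specialised to $\phi \equiv 0$ (so that the $\omega_\phi$ term drops out of the bound). The condition $\alpha > 2 - \frac{1}{2H} + \frac{1}{pH}$ is precisely what is needed to pick $\xi = 1 + \nu$ slightly below $2$ and $\delta$ slightly above $1/2$, with $\delta \xi > 1$ and $\delta < 1 + H(\alpha - \xi) - \frac{1}{p}$: indeed at $\xi \to 2^-$ the admissible upper bound for $\delta$ tends to $1 + H(\alpha - 2) - 1/p > 1/2$. Theorem \ref{thm:continuity} then yields $T^{W^H} f \in \ccc{\delta}_t\ccc{\xi}_x$ almost surely.

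Next, let $x, \bar x$ be two solutions to \eqref{eq:FPw} for $w = W^H(\omega)$ with the same initial datum $x_0$, and set $\theta := x - w$, $\bar\theta := \bar x - w$. By Definition \ref{def:solution} both lie in $\vvv{q}$ for a common $q < 2$ (enlarging $q$ if necessary, we may moreover assume $1/q + \delta > 1$), with $\theta_0 = \bar\theta_0 = 0$. Subtracting the two fixed-point identities and using \eqref{eq:LipGamma} (with identical initial data and common noise $w$),
\begin{equation*}
\|\theta - \bar\theta\|_{q-var;[0,t]} \leq C \left\| \int_0^{\cdot} T^w f(ds, \theta_s) - \int_0^{\cdot} T^w f(ds, \bar\theta_s) \right\|_{q-var;[0,t]}.
\end{equation*}
Applying Lemma \ref{lem:linearisation} to $A = T^w f \in \ccc{\delta}_t \ccc{1+\nu}_x$, the right-hand side equals $\int_0^{\cdot} (\theta - \bar\theta)\,dV$ with
\begin{equation*}
V_\cdot = \int_0^1 \int_0^{\cdot} \nabla T^w f\bigl(ds,\, \theta_s + \lambda(\bar\theta_s - \theta_s)\bigr)\,d\lambda \;\in\; \ccc{\delta}.
\end{equation*}
The linear Young estimate \eqref{eq:indeflyoung} (combined with the embedding $\ccc{\delta} \subset \vvv{q}$, valid since $\delta > 1 - 1/q$) and the vanishing of $\theta_0 - \bar\theta_0$ then give
\begin{equation*}
\|\theta - \bar\theta\|_{q-var;[0,t]} \leq C\,\|V\|_{q-var;[0,t]}\,\|\theta - \bar\theta\|_{q-var;[0,t]}.
\end{equation*}

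Finally, since $\|V\|_{q-var;[0,t]} \leq \|V\|_{\ccc{\delta}}\,t^{\delta} \to 0$ as $t \downarrow 0$, the displayed inequality forces $\theta = \bar\theta$ on a small initial interval $[0, t_0]$. To cover $[0,T]$, I would partition it into finitely many sub-intervals $[t_i, t_{i+1}]$ on each of which $\|V\|_{q-var}$ is small (possible because $\|V\|_{q-var}^{q}$ is a superadditive control) and, using the adaptedness of $\Gamma$ to restart the problem at each $t_i$ with common new initial datum $\theta(t_i) = \bar\theta(t_i)$, iterate the same contraction argument finitely many times.

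The main obstacle I anticipate is reconciling Lemma \ref{lem:linearisation}, stated for Hölder paths, with our $\vvv{q}$ solutions: one has to verify that its proof (based on sewing with general controls) still goes through when $\theta, \bar\theta \in \vvv{q}$ and $A \in \ccc{\delta}_t \ccc{1+\nu}_x$, which is granted by the Young compatibility condition $1/q + \delta > 1$ already ensured above. A secondary technical point is the restart step, which requires transferring the Lipschitz property \eqref{eq:LipGamma} from $[0,T]$ to each sub-interval $[t_i, t_{i+1}]$ once the two solutions coincide at $t_i$; this is exactly where the adaptedness of $\Gamma$ is used.
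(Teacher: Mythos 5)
Your core strategy is the same as the paper's: use the regularizing effect of $W^H$ to get $T^{w}f$ into a space of the form $\ccc{\delta}_t C^{1+\nu}_{x,loc}$ with $\delta(1+\nu)>1$, linearise the difference of the two nonlinear Young integrals via Lemma \ref{lem:linearisation}, feed this into the Lipschitz estimate \eqref{eq:LipGamma}, and run a Gronwall-type argument in $q$-variation. The small-interval contraction on $[0,t_0]$ is fine. The genuine gap is the globalization step. The estimate \eqref{eq:LipGamma}, even combined with adaptedness of $\Gamma$, only controls $\left\|\Gamma(y)-\Gamma(y')\right\|_{q-var}$ on intervals anchored at $0$, in terms of $\left\|y-y'\right\|_{q-var}$ on such intervals and the values at time $0$; it gives no estimate of the form $\left\|\Gamma(y)-\Gamma(y')\right\|_{q-var,[t_i,t_{i+1}]}\lesssim \left\|y-y'\right\|_{q-var,[t_i,t_{i+1}]}+|y(t_i)-y'(t_i)|$. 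For the path-dependent maps this framework is designed for (Skorokhod map, running extrema), such a ``restarted'' Lipschitz bound is simply false when the inputs agree only at the single time $t_i$: the output on $[t_i,t_{i+1}]$ depends on the whole history on $[0,t_i]$. This is precisely the subtlety the paper flags in the remark preceding Lemma \ref{lem:stabest} (the Gronwall estimate ``with the left-point of the interval kept fixed at $0$''), and it is why the paper does not iterate at all: it keeps the inequality $\left\| x-x'\right\|_{q-var,[0,t]}\le C\left\|\int_0^{\cdot}(\theta_r-\theta'_r)\,dV_r\right\|_{q-var,[0,t]}$ for all $t$ and concludes globally by applying Lemma \ref{lem:stabest} with $K=0$ and $x_0=x_0'$.

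Your iteration can be repaired, but not by the mechanism you invoke. By induction the two solutions agree on all of $[0,t_i]$, hence $y-y'=\int_0^{\cdot}\big(T^{w}f(ds,\theta_s)-T^{w}f(ds,\bar\theta_s)\big)$ vanishes identically on $[0,t_i]$; for a path vanishing on $[0,t_i]$ the $q$-variation over $[0,t]$ equals that over $[t_i,t]$, so the anchored-at-$0$ estimate \eqref{eq:LipGamma} does collapse to the localized inequality $\left\|\theta-\bar\theta\right\|_{q-var,[t_i,t]}\le C\left\|V\right\|_{q-var,[t_i,t]}\left\|\theta-\bar\theta\right\|_{q-var,[t_i,t]}$, and the induction closes. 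Since this observation (or, alternatively, invoking Lemma \ref{lem:stabest}) is the crux and is absent — adaptedness plus agreement at the time $t_i$ alone do not supply it — the proposal as written has a hole exactly where the paper is careful. Two smaller points: Theorem \ref{thm:continuity} is stated only for $\xi\in(0,1]$, so to reach spatial regularity $1+\nu$ you need Corollary \ref{cor:differentiable_fields} or simply the results of \cite{CG16,GG20} (the paper cites these directly to get $T^{w}f\in\vvv{q}_tC^{2}_{x,loc}$ for a.e.\ $w$); and the spatial regularity produced is only local in $x$ (the constant is $C_{\delta,R}$), which suffices because solutions are bounded on $[0,T]$, but should be acknowledged.
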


\begin{proof}
By \cite{CG16,GG20}, it holds that, for $\PP^H$-a.e. $w$,  $T^{w}f\in \vvv{q}_t C_{x,loc}^2 $, for some $q< 2$. 

Given two solutions $x,x'$, letting $\theta=x-w$, $\theta'=x'-w$, we can use the linearisation of difference of two solutions, Lemma \ref{lem:linearisation}, which implies existence of a path $A$ such that $\int_0^t T^{w} f(dr, \theta_r)- \int_0^t T^{w} f(dr, \theta'_r) = \int_0^t (\theta_r - \theta'_r) dA_r$ and the last integral is a linear Young integral. It holds that
\begin{align*}
\qvart{ x - x' } & \leq \qvart{ \Gamma\left(w + \int_0^{\cdot} T^{w} f(\theta_r) \right) - \Gamma\left( w + \int_0^{\cdot} T^{w} f(\theta'_r) \right) } \\
& \leq C \qvart{  \int_0^{\cdot} T^{\wh} f(dr,\theta_r) - \int_0^{\cdot} T^{\wh} f(dr,\theta'_r) } = C \qvart{ \int_0^{\cdot} (\theta_r - \theta'_r) dA_r }
\end{align*}

\bluepassage{
We take $x - x' = \theta - \theta'$ and observe that we are exactly in the situation of Lemma \ref{lem:stabest} with $K=0$, along with $x_0 = x_0'$, from which we conclude that $x=x'$ on $[0,T]$.}
%
\end{proof}

We then proceed to our main result, where we improve the requirement on the regularity of $f$. 

\begin{theorem} \label{thm:main}
Assume that $\Gamma$ satisfies {Assumptions} \ref{asn:Gamma}, and $\| f\|_{E} < \infty$, where either :

(1) $H \in (0,1/2)$ and $E =  L_t^p C^\alpha_x$ with $p \in [1,\infty]$ and
\[ \alpha > \left(1 - \frac{1}{2H} + \frac{1}{pH}\right) \vee \left( 1 + \eta\left(1 - \frac{\beta}{H}\right) + \frac{1}{pH} \right), \]

(2) $H \in (1/2, 1)$, and $E = C_t^{\gamma} C^0_x \cap L^\infty_t C^\alpha_x$ with
\[ \alpha > 2 - \frac{\beta}{H}, \;\;\; \gamma > H - \frac{1}{2}, \]
where we recall that $\eta, \beta$ are given by Assumption \ref{eq:asnGammaW}.

Then path-by-path uniqueness holds for \eqref{eq:FP}, and there exists a strong solution. We further have the following stability estimates : given any $M>0$, $m\geq 1$, there exists $C=C(M,m)$ s.t. if $X^1$, $X^2$ are the strong solutions with initial conditions $x^i_0$ and driving vector fields   $f^1$, $f^2$ satisfying $\|f^i\|_E \leq M$, then it holds that 
\begin{equation} \label{eq:stab}
\EE \left[ \sup_{t \in [0,T]} \left|  X^1_t - X^2_t\right|^m \right]^{1/m} \leq C \left ( |x^1_0 - x^2_0| + \left\|f^1 - f^2\right\|_{E'} \right).
\end{equation}
where $E' = L_t^p C^{\alpha-1}_x$ if $H<1/2$, $E'= L_t^\infty C^{\alpha-1}_x$ if $H>1/2$ (with $p,\alpha$ as above).
\end{theorem}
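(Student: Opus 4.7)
The plan is to recast \eqref{eq:FP} as a nonlinear Young fixed-point equation centered on $Y := \Gamma(W^H)$, exploit the regularity of $T^Y f$ provided by Theorem \ref{thm:continuity}, and close the argument using the Lipschitz property \eqref{eq:LipGamma} of $\Gamma$ in $p$-variation together with the linearization Lemma \ref{lem:linearisation} and the Gronwall-type estimate Lemma \ref{lem:stabest}. Strong existence and \eqref{eq:stab} will then follow by mollifying $f$ and passing to the limit via the same circle of ideas.

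First I would apply Theorem \ref{thm:continuity} with the adapted perturbation $\phi := C(W^H) = \Gamma(W^H) - W^H$. Assumption \eqref{eq:asnGammaW} guarantees that $\phi \in \vvv{1/\beta}$ a.s., with $\beta > H$, and with $\|\phi\|_{1/\beta-var}^{2\eta}$ enjoying a finite exponential moment; the associated control is $\omega_\phi(s,t) = \|\phi\|_{1/\beta-var,[s,t]}^{1/\beta}$. Choosing the spatial exponent $\xi$ slightly above $1$, the two hypotheses \eqref{cond:mgale} and \eqref{cond:sewing} collapse (as $\xi \searrow 1$) to the two lower bounds on $\alpha$ appearing in Theorem \ref{thm:main}(1), and one obtains $T^Y f \in \vvv{q}_t \ccc{1+\nu}_x$ for some $q < 2$ and $\nu > 0$ with $q(1+\nu) > 1$, together with Gaussian tails on the corresponding seminorm. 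The case $H > 1/2$ is analogous with time H\"older replacing variation. For smooth $f$, the identity $\int_0^t f(s, X_s)\,ds = \int_0^t T^Y f(dr, \theta_r)$ with $\theta := X - Y$ is immediate and extends to general $f$ by Proposition \ref{prop:ConvAF}; thus \eqref{eq:FP} rewrites as
\[ \theta = \Gamma\!\left(x_0 + W^H + \int_0^\cdot T^Y f(dr, \theta_r)\right) - Y, \]
with the nonlinear Young integral well-defined by Step 1.

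For path-by-path uniqueness, given two solutions $\theta, \theta'$ with the same $x_0$, Lemma \ref{lem:linearisation} (applicable thanks to $q(1+\nu) > 1$) produces $A \in \vvv{q}$ such that the difference of the two Young integrals equals $\int_0^\cdot (\theta_r - \theta'_r)\, dA_r$, after which \eqref{eq:LipGamma} and the linear Young bound \eqref{eq:indeflyoung} yield
\[ \|\theta - \theta'\|_{q-var, [0,t]} \leq C \left\| \int_0^\cdot (\theta_r - \theta'_r)\, dA_r \right\|_{q-var, [0,t]}, \]
and Lemma \ref{lem:stabest} applied pathwise (valid for a.e.\ $\omega$ by Step 1) forces $\theta \equiv \theta'$. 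For strong existence I mollify $f^n \to f$ in $E'$, solve the classical fixed point for $X^n$ (using Lipschitz $f^n$ and \eqref{eq:LipGamma}), and apply the same linearization to $(X^n, X^m)$ viewed as solutions with distinct drivers to obtain Cauchy-in-$L^m$ estimates $\EE \sup_t |X^n_t - X^m_t|^m \lesssim \|f^n - f^m\|_{E'}^m$, the constant being finite thanks to the Gaussian integrability of $\|A\|_{q-var}$ granted by Step 1; passing to the limit produces the desired strong solution and stability bound.

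The main obstacle is the coupling of parameters: Lemma \ref{lem:linearisation} forces $T^Y f$ to be strictly above $\ccc{1}$ in space, which via the $\gamma + \eta\beta > 1$ condition of Theorem \ref{thm:continuity} imposes the second lower bound $\alpha > 1 + \eta(1 - \beta/H) + 1/(pH)$ on top of the more classical Catellier-Gubinelli-type condition. A secondary delicate point is upgrading the pathwise Gronwall estimate of Lemma \ref{lem:stabest} to an $L^m$ stability bound: this depends crucially on the Gaussian tails of $\|T^Y f\|_{\vvv{q}_t \ccc{1+\nu}_x}$, which is precisely what \eqref{eq:asnGammaW} (giving finite exponential moment of $\|C(W^H)\|_{1/\beta-var}^{2\eta}$) delivers through Theorem \ref{thm:continuity}.
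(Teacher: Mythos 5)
There is a genuine gap: your scheme never uses Girsanov's theorem, but at the regularity threshold claimed in Theorem \ref{thm:main} it is indispensable. You propose to prove uniqueness by linearizing $\theta \mapsto \int_0^\cdot T^{Y}f(dr,\theta_r)$ around $Y=\Gamma(W^H)$ via Lemma \ref{lem:linearisation}. That lemma (and any estimate of $\int T^Yf(dr,\theta_r)-\int T^Yf(dr,\theta'_r)$ in terms of $\theta-\theta'$ when both arguments are genuinely moving paths) requires $T^Yf$ to be spatially $\ccc{1+\nu}$ \emph{together with} the Young compatibility condition $\gamma(1+\nu)>1$, i.e.\ $1+\nu>q$ in the variation scale — not ``$q(1+\nu)>1$'' as you wrote, which is vacuous. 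Theorem \ref{thm:continuity} only delivers time regularity of order $q$-variation with $q$ slightly below $2$ (the martingale part contributes the exponent $\delta$ barely above $1/2$), and the paths $\theta,\theta'$ themselves are only of finite $q$-variation with $q$ near $2$ (they contain the drift, whose time regularity is of the same order). Hence you would need $\nu$ close to $1$, i.e.\ essentially $T^Yf\in\ccc{2}_x$, which forces $\alpha>2-\frac{1}{2H}+\frac{1}{pH}$ — the Young-regime condition of the preliminary Proposition, not the threshold of Theorem \ref{thm:main}. The paper avoids this precisely by linearizing around an actual solution $x$: via Lemma \ref{lem:equiv} the difference of the two drifts becomes $\int_0^\cdot A(dr,(\theta'-\theta)_r)$ with $A(ds,v)=T^{x}f(ds,v)-T^{x}f(ds,0)$, which vanishes at $v=0$, so spatial Lipschitz regularity ($\xi=1$ in Theorem \ref{thm:continuity}) plus the Gronwall-type Lemma \ref{lem:stabest} suffices. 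But this requires regularity of $T^{X}f$ along a solution, drift included, and that is exactly what the Girsanov step provides (Lemma \ref{lem:girsanov_applicable} and Theorem \ref{thm:girsanov}): after removing the drift $h=\int_0^\cdot T^{\Gamma(W)}f(dr,x_0)$ by an equivalent change of measure, the remaining perturbation of the fBm is the adapted bounded-variation path $C(W)$, which is the situation Theorem \ref{thm:continuity} covers. Controlling $T^{\Gamma(W^H)}f$ alone, as in your Step 1, is not enough.

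The same omission breaks your stability/strong-existence step. The exponential integrability you invoke must hold for the $q$-variation of the averaged gradient along the interpolations $X^{\zeta}$ of the two solutions (again drift included), and the paper obtains it by introducing measures $\QQ^{\zeta}$ under which $W^H+h^{\zeta}$ is an fBm, applying Theorem \ref{thm:continuity} there, and transporting the bounds back with moment estimates on the Radon--Nikodym densities; Gaussian tails for $\left\|T^{\Gamma(W)}(\nabla f^1)\right\|$ under $\PP$ do not imply the needed bounds along $X^{\zeta}$. A further symptom of the missing step: in case (2), $H>1/2$, the hypothesis $\gamma>H-\frac12$ on the time regularity of $f$ plays no role whatsoever in your argument, whereas in the paper it is exactly what makes the Cameron--Martin norm of the drift exponentially integrable so that Girsanov applies.
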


In the next section, we will prove that reflected and perturbed equations satisfy our assumptions with $\beta=1$, $\eta = (H+1/2)\wedge 1$. It is immediate to check that, in that case, the regularity requirements on $f$ in the above Theorem coincide with those given in Theorem \ref{thm:intro}.

\vspace{3mm}

The proof of the theorem will be based on Girsanov theorem and the estimates obtained in section \ref{sec:avgpert}, as in \cite{CG16,GG20}. Namely, we will construct a solution using Girsanov theorem, showing that a drift $T^{\wh + \varphi}f $ satisfies certain continuity and integrability properties. By the equivalence of measures the same is going to hold for $T^{X} f$, where $X$ is a solution. Then due to the fact that $T^{\wh + \varphi} f$ is Lipschitz in space, we will be able to conclude with uniqueness. 
\smallskip

We will proceed by first showing path-by-path uniqueness, and then stability estimates (strong existence will then be an immediate consequence of the latter).
\smallskip

The following lemma shows applicability of Girsanov theorem and will be used repeatedly.

\begin{lemma}\label{lem:girsanov_applicable}
Let $f$ be as in Theorem \ref{thm:main}, and let $h_t = \int_0^t T^{\Gamma(\wh)} f(ds,0)$. Then it holds that

\begin{equation}\label{eq:gauss_cmnorm}
\forall \lambda >0, \;\;\; \EE \left[ \exp\left(\lambda \|h\|^2_{\mathcal{H}^{H,T}} \right)\right] \leq K(\lambda) < \infty,
\end{equation}
where $K$ only depends on $f$ through its norm in $E$.
\end{lemma}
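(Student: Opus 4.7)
The plan is to deduce the Gaussian exponential integrability of $\|h\|_{\mathcal{H}^{H,T}}$ from the temporal increment bound of Theorem \ref{thm:continuity}, combined with the Sobolev embedding $W^{H+1/2+\kappa,2}\subset\mathcal{H}^{H,T}$ of Proposition \ref{prop:besovembedding}. I would set $\phi := C(W^H) = \Gamma(W^H)-W^H$ and take as adapted control $w_\phi(s,t) := \|\phi\|_{1/\beta\text{-var};[s,t]}^{1/\beta}$, so that $|\phi_{st}|\leq w_\phi(s,t)^\beta$, and $w_\phi(0,T)^{2\eta\beta} = \|\phi\|_{1/\beta\text{-var};[0,T]}^{2\eta}$ has Gaussian exponential moments by \eqref{eq:asnGammaW}.

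Apply Theorem \ref{thm:continuity} at $x=0$ (taking $\xi=0$ as no spatial modulus is needed), with $\eta,\beta$ from Assumption \ref{asn:Gamma}. A direct check shows that the reinforced lower bound on $\alpha$ in Theorem \ref{thm:main}(1) yields $\gamma > H$ and $\epsilon > H$, so one may choose $\delta\in(1/2,1/2+\epsilon)$ with $\delta>H+1/2$. This produces, for all $0\leq s<t\leq T$,
\[ |h_{st}|\leq c\|f\|_E(t-s)^\gamma w_\phi(s,t)^{\eta\beta} + C_{\delta,R}(t-s)^\delta, \]
with $\EE[\exp(\Lambda C_{\delta,R}^2)]<\infty$. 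Choosing $\kappa>0$ small enough that $\gamma > H+\kappa$, $\delta>H+1/2+\kappa$ and $H+1/2+\kappa<1$, I would bound $\|h\|_{\mathcal{H}^{H,T}}^2\lesssim\|h\|_{W^{H+1/2+\kappa,2}}^2 = \int_{[0,T]^2}|h_{uv}|^2/|v-u|^{2H+2\kappa+2}\,du\,dv$ via Proposition \ref{prop:besovembedding} and expand using $(a+b)^2\leq 2(a^2+b^2)$. The $C_{\delta,R}$-contribution then reduces to a deterministic multiple of $C_{\delta,R}^2$, since $2\delta-2H-2\kappa-2>-1$.

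The main obstacle is controlling the perturbation term $J:=\int_{[0,T]^2}(v-u)^{2\gamma-2H-2\kappa-2}w_\phi(u,v)^{2\eta\beta}\,du\,dv$, whose integrand has a singular factor of exponent possibly below $-1$; the naive bound $w_\phi(u,v)\leq w_\phi(0,T)$ would demand the stronger condition $\gamma>H+1/2$. To overcome this, I would use a dyadic decomposition $[0,T]^2=\bigcup_n B_n$ with $B_n=\{T2^{-n-1}\leq v-u\leq T2^{-n}\}$: on $B_n$, every pair $(u,v)$ lies in a union of two adjacent length-$T2^{-n}$ dyadic intervals of $[0,T]$, so splitting the resulting sum over such intervals into even and odd shifts and invoking the super-additivity of $w_\phi^{2\eta\beta}$ (valid since $2\eta\beta>1$) bounds the contribution of band $n$ by $CT^{\rho+2}2^{-n(\rho+2)}w_\phi(0,T)^{2\eta\beta}$ with $\rho=2\gamma-2H-2\kappa-2$; the geometric series converges precisely because $\gamma>H+\kappa$. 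Combining yields $\|h\|_{\mathcal{H}^{H,T}}^2\lesssim\|f\|_E^2 w_\phi(0,T)^{2\eta\beta}+C_{\delta,R}^2$, and \eqref{eq:gauss_cmnorm} follows by Cauchy--Schwarz from the Gaussian exponential moments of both random terms (with $K(\lambda)$ depending on $f$ only through $\|f\|_E$). For case (2) ($H>1/2$) the Sobolev order $H+1/2+\kappa$ exceeds one and the Gagliardo seminorm is no longer directly applicable; there I would either invoke the appropriate higher-order embedding into $\mathcal{H}^{H,T}$, or use the enhanced temporal regularity $f\in C^\gamma_tC^0_x$ to treat $h$ as a classical Bochner integral with $\dot h(t)=f(t,\Gamma(W^H)_t)$ and conclude via Gaussian concentration in the spirit of Remark \ref{rmk:TailsH12}.
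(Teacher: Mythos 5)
Your $H<\frac12$ argument follows essentially the paper's own route (apply Theorem \ref{thm:continuity} with $\phi=C(W^H)$ and the adapted control $w_\phi(s,t)=\|C(W^H)\|_{1/\beta-var;[s,t]}^{1/\beta}$, embed $W^{H+1/2+\kappa,2}\subset\mathcal{H}^{H,T}$, and use superadditivity of $w_\phi^{2\eta\beta}$ to tame the singular factor in the Gagliardo seminorm -- your dyadic-band decomposition is an equivalent substitute for the estimate $\int_s^{t-v}w(u,u+v)\,du\leq 2v\,w(s,t)$ used in the paper, and your exponent checks $\gamma>H$, $\epsilon>H$ are correct once $\xi$ is taken small and positive rather than $\xi=0$). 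However, there is a genuine gap at the very end: your ingredients only give $\EE[\exp(\Lambda C_{\delta,R}^2)]<\infty$ for \emph{some} $\Lambda>0$ and $\EE[\exp(\|C(W^H)\|_{1/\beta-var}^{2\eta})]<\infty$ with a \emph{fixed} coefficient (that is what \eqref{eq:asnGammaW} provides), so the Cauchy--Schwarz step only proves \eqref{eq:gauss_cmnorm} for $\lambda$ small enough. The lemma asserts the bound for \emph{all} $\lambda>0$, and this is not cosmetic: Theorem \ref{thm:girsanov} (Novikov via Theorem 2 of \cite{NO02}, plus moment bounds on the densities of every order) requires exactly the all-$\lambda$ statement, and it is used repeatedly in the stability proof. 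The paper closes this gap with an extra argument you omit: split $f=f^1+f^2$ by Littlewood--Paley blocks as in \eqref{eq:split}, treat the smooth part $f^1$ as a classical integral, and exploit that $\|f^2\|_{L^p_t\ccc{\alpha-\eps}}\lesssim 2^{-N\eps}$ so that the Gaussian-tail constant of the corresponding $C_{\delta,R}$ (which scales linearly in this norm) can absorb any prescribed $\lambda$ by choosing $N$ large; the $w_\phi$-term is upgraded to all-$\lambda$ integrability by lowering $\eta$ slightly, since $\EE\exp(X^2)<\infty$ gives $\EE\exp(\lambda X^{2\eta'/\eta})<\infty$ for all $\lambda$ when $\eta'<\eta$. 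Without some version of this, your proof does not establish the stated conclusion.

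A secondary weakness is the case $H>\frac12$, which you leave as two alternatives. The first (a higher-order embedding) is not what the paper does; the second is close in spirit, but the suggested conclusion "via Gaussian concentration in the spirit of Remark \ref{rmk:TailsH12}" does not apply directly: Proposition \ref{prop:GaussConc} needs an estimate of the form $F(W+g)\leq G(W)+\sigma\|g\|_{\mathcal{H}^{H,T}}$, and the map $W\mapsto \int_0^\cdot f(r,\Gamma(W)_r)\,dr$ is not Lipschitz in the Cameron--Martin norm when $f$ is merely H\"older in space. The paper instead estimates the temporal increments of $t\mapsto f(t,\Gamma(W)_t)$ directly, using \eqref{eq:HolderGamma} together with $f\in C^\gamma_tC^0_x\cap L^\infty_tC^\alpha_x$, obtaining a bound by $(t-s)^{H-1/2+\varepsilon}\bigl(1+\|W\|_{C^{H-\kappa}}^{\alpha}\bigr)$, whose tails are good enough to give \eqref{eq:gauss_cmnorm} for every $\lambda$; you would need to make this (or an equivalent) precise, again keeping track of the all-$\lambda$ requirement.
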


\begin{proof}
We treat separately the cases $H < 1/2$ and $H>1/2$ since they require different arguments. 

\vspace{0.25cm}
\paragraph{$\mathbf{H < 1/2}$}  According to \eqref{eq:cmbesovembedding}, it suffices to show that for some $\epsilon >0$, $\norm{ h }_{W^{H+1/2+\epsilon,2}}$ has Gaussian tails.

We will apply Theorem \ref{thm:continuity}, taking $\phi_t = C(\wh)_t$ and the random control $w_{\phi}(s,t) = \| C(\wh)\|_{1/\beta-var;[s,t]}^{1/\beta}$, and small $\xi>0$. It then holds that  we have for $\delta> H + 1/2 + \epsilon$ and $\gamma_0> \frac{1}{2} + H(1-\eta) $,
$$\norm{ T^{\wh + \phi} f }_{\ccc{0+}}  \ls (t-s)^{\gamma_0} w_{\phi}(s,t)^{\eta \beta} + L(\omega) (t-s)^{\delta}$$ where $L(\omega)$ has Gaussian tails.

This implies that for $\Delta^2 = \{ (s,t) \lvert 0 \leq s \leq T - t, 0 \leq t \leq T \}$
\begin{equation} \label{eq:BndWH}
\| h \|_{W^{H+1/2+\epsilon,2}}^2 \lesssim \int_{\Delta^2} \frac{ \absv{t-s}^{2 \gamma_0} w_{\phi}(s,t)^{2 \eta \beta}}{\absv{t-s}^{2+2H+2\epsilon}}ds dt+ L(\omega)^2 \int_{\Delta^2} \frac{\absv{t-s}^{2\delta}}{\absv{t-s}^{2+2H+2\epsilon}} ds dt.
\end{equation}
The second term is exponentially integrable by Gaussian integrability of $L$ for some $\lambda > 0$ and $\delta > H+1/2$.

For the first one we make use of the following observation, used in Theorem 2.11 in \cite{FP18}. For any control $w$,  it holds that
 \begin{align*}\label{eq:integrcontr}
\int_s^{t-v} w(u,u+v) du = \sum_{i=0}^{M-1} \int_{t_i}^{t_{i+1}} w(u,u+v) du& \leq \sum_{i=1}^M (t_{i+1} - t_i) \sup_{u \in [t_i,t_{i+1}]} w(u,u+v) \\
&\leq v \sum_{i=1}^M w(t_i,t_{i+2})   \leq 2 v w(s,t)
\end{align*}
(where we have taken $\{t_i\}_{i = 0, ..., M+1}$ a subdivision of $[s,t]$ of step size $v$. Applying this observation to $w(s,t) = w_{\phi}(s,t)^{2 \eta \beta}$ which is a control since $\eta \beta > \inv{2}$, we obtain that the first term in \eqref{eq:BndWH} is bounded by
\begin{align*}
 & \int_0^{t-v} \int_s^{t-v} v^{2\gamma_0 - 2H - 2 - \epsilon} w_{\phi}(u,u+v)^{2\eta \beta} du dv  \\ \ls & w_{\phi}(0,T)^{2\eta \beta} \int_0^{t-v} v^{2\gamma_0 - 2H - 1 - \epsilon} dv \ls w_{\phi}(0,T)^{2\eta \beta} T^{2\gamma_0-2H}.
\end{align*}

Finally, this is exponentially integrable by Assumption \eqref{eq:asnGammaW}.

To extend it to all $\lambda > 0$, we decompose $f$ into a smooth and irregular part using Paley-Littlewood blocks :

$$f^1 = \sum_{j: 0 \leq j \leq N} \Delta_j f, \quad f^2 = \sum_{j: j > N} \Delta_j f, $$

we then have for some $C$ and $\eps > 0$:

\begin{equation}\label{eq:split}
\norm{ f^1 }_{L_t^q\ccc{\eps_0}} \leq C \norm{ f }_{L_t^p\ccc{\alpha}_x} 2^{N(-\alpha+\eps_0)}, \quad \quad \norm{ f^2 }_{L_t^q\ccc{\alpha-\eps}} \leq C \norm{ f }_{L^q_t\ccc{\alpha}_x} 2^{-N\eps} .
\end{equation}
\bluepassage{Observe that as $\alpha < 0$ the factor $2^{N(-\alpha)}$ is to be thought of as "big". In the first inequality we used the characterisation of H\"older spaces with Paley-Littlewood blocks, see \cite{BCD11}, Def. 2.68:

\[ \norm{ f^1}_{\ccc{\eps_0}} \leq C 2^{-N(\alpha-\eps_0)} \sup_{j < N} 2^{j\alpha } \norm{ \Delta_j f^1 }_{\ccc{\alpha}}  \]

where $\eps_0 \in (0,\eps)$ and we used $\Delta_k \Delta_j f^1 = 0$ for $k \geq N+1$. The $\sup$ on the right hand-side is then bounded by $\norm{ f }_{\ccc{\alpha}}$. 
}
Let $\bar{\lambda}$ be such that $\EE \exp\left( \bar{\lambda} L^2 \right) < K(\bar{\lambda})$, precisely $\bar{\lambda} \nofn^2 c_0 < 1$ for some constant $c_0 > 1$. For any $\lambda$ we can choose $N(\lambda)$ such that 
\begin{equation}\label{eq:anylambda}
\lambda < c_0^{-1} C^{-2} 2^{2N\eps-2} \bar{\lambda}
\end{equation}

The first one implies that $(t,x) \mapsto \int_0^t f^1_r(\wh_r+\phi_r+x) dr$ is an integral well-defined classically with enough continuity in space. It follows:

\[ \EE \exp\left( \lambda \norm{ \int_0^{\cdot} f^1_r(\wh_r + \phi_r + x ) }_{W^{H+1/2+\eps,2}}^2 \right) < \exp\left( \lambda C^2 \nofn^2 2^{-2N\alpha} \right) \]

For the irregular component we have, we write, repeating steps for the finite $\lambda < \bar{\lambda}$ case:

\[ \EE \exp\left( \lambda \norm{ T^{\wh+\phi} f^2 }^2_{W^{H+1/2+\eps,2}} \right) \leq \EE \exp\left( 2 \lambda w_{\phi}(0,T)^{2\eta\beta} T^{2\gamma_0} \right) + \EE \exp\left(2 \lambda \bar{L}^2 C_T \right) \]

As $w_{\phi}^{\eta \beta}$ was assumed to have Gaussian tails, we can assure finiteness of the first term for every $\lambda$ by taking $\eta' < \eta$ if necessary. For the second term, we use the bound on $L$ obtained in Theorem \ref{thm:continuity} - $\norm{ \bar{L} }_m \leq m^{1/2} C \norm{ f^2 }_{L_t^p\ccc{\alpha-\eps}}$. Then using Taylor expansion of $\exp$, choice of $N$ in \eqref{eq:anylambda} and a bound on $\norm{ f^2 }_{L_t^p\ccc{\alpha-\eps}}$ from \eqref{eq:split} we conclude that the second term is finite as well.

\vspace{0.25cm}
\paragraph{$\mathbf{H > 1/2}$} In this case $f$ is a continuous function, so that it may be evaluated pointwise, and $h$ is simply the well-defined integral
\[ h_t = \int_0^t f(r,x_0+ \Gamma(W)_r) dr. \]
By the Besov Cameron-Martin embedding, it will be enough to show that $t \mapsto f_t(\Gamma(W)_t)$ is an element of $C^{H-1/2+\epsilon,2}$, with (better than) Gaussian tails for its norm. 
However, it follows immediately from \eqref{eq:HolderGamma} and the regularity assumption on $f$, that, for sufficiently small $\epsilon, \kappa>0$,

\begin{equation}\label{eq:fwphi_cont}
\begin{aligned}
\absv{ f(t,\Gamma(W)_t + \phi_t) - f(s,\Gamma(W)_s + \phi_s) } \ls & (t-s)^{H-1/2+\varepsilon} \left ( 1 + \| W\|_{C^{H-\kappa}}^\alpha \right)
\end{aligned}
\end{equation}
and we can conclude since $ \| W\|_{C^{H-\kappa}}$ has Gaussian tails.

\end{proof}

We will also need the following, which is a simple modification of Lemma 7 from \cite{GG20}.

\begin{lemma}\label{lem:equiv}
Let $w,\theta$ be such that $T^{w+\theta}f \in \vvv{q}_t\mathcal{C}_x^1$ and let $\theta \in \vvv{q}_t$ for some $q <2$. Then for any $\tilde{\theta} \in \vvv{q}_t$ it holds:

$$\int_0^{\cdot} T^{w+\theta} f(dr,\tilde{\theta}_r) = \int_0^{\cdot} T^{w} f(dr,\theta_r+\tilde{\theta}_r). $$
\end{lemma}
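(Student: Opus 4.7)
The plan is to first establish the identity in the elementary case of smooth $f$, then pass to the general case by density, exploiting the continuity of the nonlinear Young integral with respect to its integrator (Proposition \ref{lem:integrest}).

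First, suppose $f$ is smooth and bounded in $x$ (say $f \in L^{\infty}_t \ccc{1,b}_x$). In this case both averaged fields are absolutely continuous in time with $\partial_t T^{w+\theta}f(t,x) = f(t, w_t+\theta_t+x)$ and $\partial_t T^w f(t,x) = f(t, w_t + x)$, and moreover the Lipschitz estimate on $f$ in $x$ gives the uniform bound
\[
\left| (T^{w+\theta}f)_{uv}(\tilde\theta_u) - \int_u^v f(r, w_r + \theta_r + \tilde\theta_r)\, dr\right| \leq \|\nabla f\|_\infty (v-u)\, \qvaruv{\tilde\theta},
\]
and an analogous estimate holds for the RHS sum with $\qvaruv{\tilde\theta}$ replaced by $\qvaruv{\theta+\tilde\theta}$. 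Refining the partition and summing, both sums converge to the standard Lebesgue integral $\int_0^t f(r, w_r+\theta_r+\tilde\theta_r)\, dr$, so the identity holds.

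For general $f$, approximate by smooth $f_n$ so that $f_n \to f$ in the $L_t^p \ccc{\alpha-\varepsilon}_x$ topology from which the regularity estimate of Section \ref{sec:avgpert} is applied (this amounts to standard spatial mollification, using that $\ccc{\alpha}$ embeds in $\ccc{\alpha-\varepsilon}$ with density of smooth functions). The same estimate giving $T^{w+\theta}f \in \vvv{q}_t \ccc{1}_x$ also yields the convergences
\[
T^{w+\theta}f_n \longrightarrow T^{w+\theta}f, \qquad T^{w}f_n \longrightarrow T^{w} f \qquad \text{in } \vvv{q}_t \ccc{1}_x
\]
(for some $q<2$). Applying Proposition \ref{lem:integrest} in the form
\[
\left\| \int_0^{\cdot} A_n(dr, \eta_r) - \int_0^{\cdot} A(dr, \eta_r) \right\|_{q-var,[0,t]} \lesssim \qvarstCx{A_n - A}(1 + \qvart{\eta}),
\]
with $\eta = \tilde\theta$ for the LHS and $\eta = \theta+\tilde\theta$ for the RHS, we may pass to the limit on both sides and, combining with the identity for smooth $f_n$, conclude that
\[
\int_0^{\cdot} T^{w+\theta}f(dr, \tilde\theta_r) = \int_0^{\cdot} T^w f(dr, \theta_r + \tilde\theta_r).
\]

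The one subtlety, which is the main technical point, is ensuring that the smooth approximation $f_n$ simultaneously produces convergence of \emph{both} averaged fields in $\vvv{q}_t \ccc{1}_x$. This is not automatic in general, but follows here because the regularity of Section \ref{sec:avgpert} depends only on the $L_t^p \ccc{\alpha}_x$-norm of $f$ (plus the adapted variation structure of the perturbation), which is shared by $w$ and $w+\theta$ as integrators since $\theta \in \vvv{q}_t$ with $q<2$ satisfies the required variation bound with $\beta > H$.
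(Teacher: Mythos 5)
Your reduction to smooth $f$ is fine, but the approximation step that is supposed to carry the identity to general $f$ has a genuine gap, and it is the heart of the matter. The lemma is a purely deterministic statement about two fixed paths $w,\theta$: the only hypothesis is the qualitative one that $T^{w+\theta}f \in \vvv{q}_t\ccc{1}_x$. From this you cannot extract any quantitative control of $T^{w+\theta}(f_n-f)$, let alone of $T^{w}(f_n-f)$, in $\vvv{q}_t\ccc{1}_x$, so the claimed convergences $T^{w+\theta}f_n\to T^{w+\theta}f$ and $T^{w}f_n\to T^{w}f$ in that space do not follow. Your fix, invoking the estimates of Section \ref{sec:avgpert}, imports hypotheses that are absent from the statement: Theorem \ref{thm:continuity} requires the reference path to be a fractional Brownian motion, the perturbation $\phi$ to be \emph{adapted}, and its increments to be controlled with exponent $\beta>H$; moreover your assertion that $\theta\in\vvv{q}_t$ with $q<2$ automatically gives the required $\beta>H$ is false whenever $H\geq 1/2$. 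The omission is not cosmetic: in the paper this lemma is applied inside the path-by-path uniqueness argument with $\theta$ built from an \emph{arbitrary} second solution, which is neither adapted nor linked to the fBm law, exactly the situation a probabilistic argument cannot reach. A further problem is that you treat the right-hand side as a nonlinear Young integral against $T^{w}f$, whose regularity is nowhere assumed; under the stated hypotheses the very meaning of that side (convergence of its Riemann sums) is part of what must be proved.

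The paper's own proof avoids all of this: it is a direct deterministic comparison of Riemann sums, obtained by rerunning the proof of Lemma 7 in \cite{GG20} in the variation scale, replacing the H\"older increments $\absv{\theta_r-\theta_{t_i}}^{\alpha}$ by the local variation $\norm{\theta}_{q;[t_i,r]}^{\alpha}$ and the mesh quantity by $\Pi_n=\sup_{[u,v]\in\PC^n}\norm{\theta}_{q;[u,v]}$, using only the assumed regularity of $T^{w+\theta}f$ and the (continuity and) finite $q$-variation of $\theta,\tilde\theta$. If you wish to salvage an approximation route, note that spatial mollification commutes with averaging, $T^{w+\theta}(f\ast\rho_\eps)=(T^{w+\theta}f)\ast\rho_\eps$, which yields deterministic bounds on the $w+\theta$ side without any probability; but the $T^{w}f$ side still carries no assumed regularity, so a Riemann-sum argument of the paper's type seems unavoidable.
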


\begin{proof}
We replace $\absv{ \theta_r - \theta_{t_i} }^{\alpha} = \norm{ \theta }_{q;[t_i,r]}^{\alpha}$ and $\Pi_n = \sup_{ [u,v] \in \PC^n } \norm{ \theta }_{q;[u,v]}$ in the original proof.
\end{proof}

\begin{proof}[Proof of path-by-path uniqueness]

The proof is similar to that of Lemma 10 in \cite{GG20}, which is based on ideas in \cite{CG16}. 

Define the event:
$$U = \{ w : \mbox{ for some }p<2, \; T^{ w } f \in \vvv{p}_t\ccc{1}_x,\ \exists X \in w + \vvv{p}, \ \text{s.t $X$  solves \eqref{eq:FPw}}, T^X f \in \vvv{p}_t\ccc{1}_x \}. $$

\textit{Step 1.} We first want to show that $\PP^H(U) = 1$, where $\PP^H$ is the fBm law on path-space. 

The first part in the definition is satisfied $\PP^H$-a.s., by results of \cite{CG16} - \bluepassage{Theorem 1.1 therein}.

For the second part - start from the measure $\PP^H$, under which $\wh$ is a fractional Brownian motion and let $X = \Gamma(x_0 + \wh)$,
 \[ h_t = \int_0^t T^X f(dr, x_0), \] 
 and note that
\begin{equation}\label{eq:solinit}
X = \Gamma\left( x_0 + \wh + h - h\right) = \Gamma\left( x_0 + \int_0^{\cdot} T^{\Gamma(\wh)} f(dr, x_0) + \wh - h \right) =: \Gamma\left( x_0+h_t + \tilde{W}^H \right)
\end{equation}

Note that the condition on $\alpha$ implies that we can apply Theorem \ref{thm:continuity}, using $\beta$ as in Assumption \ref{eq:asnGammaW} and taking $\eta=1$, for $\xi=1$, to obtain that, $\PP^H$-a.s.,
\[  T^X  f \in \vvv{p}_t\ccc{1}_x \mbox{ for some } p < 2. \]

In addition, by Lemma \ref{lem:girsanov_applicable}, we can apply Girsanov to the drift $h_t = \int_0^t T^X f(dr,x_0)$, so that $\tilde{W}^H = \wh -h$ is a fBm under $\QQ \sim \PP^H$. We then have 
%
%
%
\begin{equation}\label{eq:tildetheta}
{\theta} = X - \tilde{W}^H = \Gamma(x_0+ \wh) -  \wh + h  \in \vvv{p} 
\end{equation}
%

Then using that, by Lemma \ref{lem:equiv}, 
$$\vvv{p}\ccc{1} \ni \int_0^{\cdot}  T^{\tilde{W}^H + {\theta}} f(dr, x_0) = \int_0^{\cdot} T^{\tilde{W}^H} f(dr, x_0 + X - \tilde{W}^H_r) = \int_0^{\cdot} T^X f(dr, x_0)$$ we can rewrite \eqref{eq:solinit} as: 

\begin{equation}\label{eq:xrewritten}
X= \Gamma\left( x_0 + \int_0^{\cdot}  T^{X - \tilde{W}^H + \tilde{W}^H} f(dr, x_0) + \tilde{W}^H \right) = \Gamma\left( x_0 + \int_0^{\cdot}  T^{\tilde{W}^H} f(dr, x_0 + \theta_r ) + \tilde{W}^H \right),
\end{equation}

namely that $X$ is a solution to \eqref{eq:FPw} for the path $\tilde{W}^H$. Equation \eqref{eq:xrewritten} is valid on a set of $\PP^H$-full measure , by equivalence of measures this is also true on $\QQ$, under which $\tilde{W}^H = \wh - h$ is a fBm. Hence the second condition in the definition of $U$ holds for a.e. fBm path, which concludes the proof of the claim.

%
%

\textit{Step 2.} We then show that uniqueness holds for any path $w \in U$. By definition of $U$, there exists a solution $x = \theta + w$, and we assume we are given another one $x' = \theta'+w$, with $\theta' \in\ccdv $. 

Note that it then holds that

\begin{align*}
\int_0^t T^{x' } f(dr, \cdot) - \int_0^t T^{x} f(dr, \cdot) = & \int_0^t T^{x + \theta' - \theta} f(dr, \cdot) - \int_0^t T^{x} f(dr, 0) \\
= & \int_0^t T^{x} f(dr, \theta'_r-\theta_r ) - \int_0^t T^{x} f(dr, 0)
\end{align*}

where we used Lemma \ref{lem:equiv}. We define $A(ds, v) = T^{x} f(ds, v) - T^{x} f(ds, 0)$ \bluepassage{and for any $t \in [0,T]$ there holds $A(t,0) = 0$. $A$ has the same regularity properties as $T^x f$, in particular it is Lipschitz in space. Using \eqref{eq:LipGamma} and $\theta' - \theta = x' - x$ we observe:

\[ \qvart{ x' - x } \leq C  \qvart{ \int_0^{\cdot} A(dr, \theta' - \theta) } \]

which implies that we can directly apply Lemma \ref{lem:stabest} with $K =0$. Since $\absv{\theta_0 - \theta'_0}=0$ it follows that $\norm{ \theta - \theta' }_{q;[0,T]} = 0$. }

\redpassage{
Therefore by Lemma \ref{lem:integrest} we have:
 
 \begin{equation}\label{eq:lipdiffest}
 \qvart{ \int A(dr, v_r) } \ls \left\|  T^{x}  f \right\|_{\vvv{q}_t C_x^1} \qvart{ v} 
 \end{equation}
 
Now we can estimate the difference of two solutions in variation norm and plug in (\ref{eq:lipdiffest}) along with \eqref{eq:LipGamma}:

\[ \qvart{ x- x' } \leq C_P \qvart{ T^x f } \qvart{x - x'} \]

where we used $x'_0 = x_0$ in the variation estimate on nonlinear Young integral \eqref{eq:indefnyoung}. Therefore by Lemma \ref{lem:stabest} applied with $K=0$ we obtain $\qvart{x-x'} = 0$

\begin{equation}\label{eq:oldcontraction}
\begin{aligned}
\qvart{\theta' - \theta} =& \qvart{\Gamma(x_0 + \int_0^{\cdot} T^{X'} f(dr, x_0) + w) - \Gamma(x_0 + \int_0^{\cdot} T^X f(dr, x_0) + w)}\\
= & \qvart{   \int_0^{\cdot} T^{X'} f(dr, x_0) -  \int_0^{\cdot} T^{X} f(dr, x_0) } \\
\ls & \left\|  T^{x}  f \right\|_{\vvv{q}_t C_x^1} \qvart{ \theta - \theta'}
\end{aligned}
\end{equation}

we pick $t_0$ small enough (and taking slightly larger $q$ if necessary) so that (\ref{eq:contraction}) allows us to use the contraction principle, and  we conclude that $\theta \equiv \theta'$ on $[0,t_0]$. We then iterate to extend this to the whole interval $[0,T]$.
}

\end{proof}

%
%
%
%
%
%


We now fix $f^1$, $f^2$ smooth and show that the stability bound from Theorem \ref{thm:main} holds.

\begin{proof}[Proof of stability estimates for smooth $f^1$, $f^2$.]
We will follow the proof of Theorem 3.13 in \cite{GHM21}, the main difference being that we need to bound the difference of solutions in variation norm, with some additional technicalities due to the non-linearity of $\Gamma$. 

 Let $X^i = \Gamma(x_0^i + h^i + W) = x_0^i + h^i + W + K^i$ with $h^i_t = \int_0^t f^i_r(X_r) dr = \int_0^t T^{X^i} f(dr, 0)$. 


For $\zeta \in [0,1]$, let $X^{\zeta} = \zeta X^1 + (1-\zeta) X^2 = x^{\zeta} + h^{\zeta} + W + K^{\zeta}$. Then it holds that

\begin{equation*}
\begin{aligned}
h^1 - h^2 = & \int_0^t f^1_r(X^1_r) dr - \int_0^t f^2_r(X^2_r) dr \\ 
 = & \int_0^t (f^1 - f^2)(X^2_r) dr + \int_0^t \int_0^1 \nabla f^1( X^{\zeta}_r ) d\zeta \cdot (X^1_r - X^2_r) dr \\
 = & \int_0^t (f^1-f^2)(X^2_r) dr + \int_0^t dA_r (X_r^1 - X_r^2) 
\end{aligned}
\end{equation*}

where we took 
\begin{equation*}
A_t = \int_0^t \int_0^1 \nabla f^1(X_r^{\zeta}) d\zeta dr
\end{equation*}.

By the Lipschitz property of $\Gamma$ (see \eqref{eq:LipGamma})  we have, for any $q<2$

\begin{equation*}
\qvart{ X^1 - X^2 } \leq {C} \left( \absv{ X^1_0 - X^2_0} + \qvart{ \int_0^{\cdot} dA_r (X^1_r - X^2_r) } + \qvart{ \int_0^{\cdot} (f^1-f^2)(X^1_r) dr } \right),
\end{equation*}

so that we can apply Lemma \ref{lem:stabest} with \bluepassage{ $K = C \left( \absv{ X^1_0 - X^2_0} + \norm{ \int_0^{\cdot} (f^1- f^2)(X^1_r) dr }_{q;[0,T]} \right) $. } Therefore:

\begin{equation}
\qvart{X^1 - X^2 } \leq  C\exp\left( C \qvart{A}^{q(1+\eps)} \right) \left( \absv{x_0^1 - x_0^2} + \qvart{ \int_0^t (f^1 - f^2)(X^2_r) dr } \right) . \label{eq:stab1}
\end{equation}

%

Let $\QQ^i$, $i=1,2$ be the measure obtained by Girsanov's theorem, such that $W+h^i$ is a fBm.  \redpassage{Then since $h^i = \Gamma(x_0 + W- h^i)$ - ŁM: lhs is a drift, rhs is reflected fBm, so rhs > 0, but the drift not necessarily. so this equality is suspicious. We have $h = T^X = X - \wh = \Gamma( \wh + h ) - \wh$

Adding a blue part to clarify potential confusion, I don't mind removing}.
 \bluepassage{Since \eqref{eq:solinit} $X^i = \Gamma\left( x_0 + \wh + h^i \right)$, by changing measure and using } Lemma \ref{lem:girsanov_applicable} we have that for any $\lambda>0$, 
\[ \EE_{\QQ^i} \left[ \exp(\lambda \|h^i\|_{\mathcal{H}^{H,T}}^2)\right] \leq K(\lambda) < \infty, \]
where $K$ only depends on $\|f^i\|_E$. This then further implies by Theorem \ref{thm:girsanov} (Girsanov) that for any $m \geq 1$,
\begin{equation}
\EE_{\QQ^i} \left[ \left( \frac{ d\PP }{ d\QQ^i } \right)^{m}  \right] + \EE_{\PP} \left[ \left( \frac{ d\QQ^i }{ d\PP } \right)^{m}  \right] \leq C_m,
\end{equation}
where again $C_m$ only depends on $\|f^i\|_E$, a fact that we will use repeatedly in the sequel.

%
%

Let us first bound the moments of the second term in \eqref{eq:stab1}. 

\begin{align*}
\EE_{\PP} \left[ \qvart{ \int_0^{\cdot} (f^1-f^2)(X^2_r) dr }^{m} \right] \leq & \EE_{\QQ^2} \left[ \frac{ d\PP }{ d\QQ^2 }^{2} \right]^{1/2} \EE_{\QQ^2} \left[ \qvart{ \int_0^{\cdot} (f^1-f^2)(X^2_r) dr }^{2m} \right]^{1/2} \\
= &  \EE_{\QQ^2}\left[ \frac{ d\PP }{ d\QQ^2 }^2 \right]^{1/2} \EE_{\PP} \left[ \qvart{ \int_0^{\cdot} T^{\Gamma(x_0+\wh)}(f^2-f^1)(0,dr)}^{2m} \right]^{1/2}.
\end{align*}

We now fix 
\[\frac{1}{1+H(\alpha-1) - \frac{1}{p}} < q < 2\]
(this is possible by the assumptions on $\alpha$).

We then apply Theorem \ref{thm:continuity}, with parameters $\xi>0$ small, $\bar{\alpha}=\alpha-1$,  $\eta,\beta$ as in \eqref{eq:asnGammaW}, (again it follows from the assumption on $\alpha$ that this is possible), which leads to
\[ \EE_{\PP} \left[ \qvart{ \int_0^{\cdot} T^{\Gamma(x_0+\wh)}(f^2-f^1)(0,dr)}^m \right] \lesssim \| f^1-f^2\|_{L^p_t \ccc{\alpha-1}_x}. \]

Now we need to estimate $\EE \exp\left( c \qvart{A}^{q(1+\eps)} \right)$. \bluepassage{Note that by variation embedding $\qvart{A} \leq \norm{A}_{q(1+\eps);[0,t]}$ and since $\eps$ is arbitrarily small we simply reset $q := q(1+\eps) < 2$}.  We have $A= \int_0^1 A^\zeta d\zeta$ where $A^{\zeta}_t = \int_0^t (\nabla f^1)(X^{\zeta}_r) dr$, so that by Fubini and Jensen we have, :

\[\exp(c \qvart{ A }^{q}) \leq \int_0^1 \exp(c \qvart{ A^{\zeta} }^{q})] d\zeta, \]
and it will suffice to bound each of the $\EE\exp(c \qvart{ A^{\zeta} }^{q})$.

We then fix $\zeta \in [0,1]$ and note that
\[ A^{\zeta}_t = \int_0^t (\nabla f^1)(X^{\zeta}_r) dr = \int_0^t T^{\wh + h^\zeta + K^\zeta}(\nabla f^1)(x_0, dr) \]
where $h^\zeta$, $K^\zeta$ are the convex combinations of $h^i$, $K^i$. 

Then we note that  we can find a probability measure $\QQ^\zeta \sim \PP$ s.t. $\wh+h^\zeta$ is a $\QQ^\zeta$ fBm, and $\frac{d\QQ^\zeta}{d\PP}$, $\frac{d\PP}{d\QQ^\zeta}$ have moments of all orders (with bounds only depending on $\|f^i\|_E$).

Indeed, this again follows from Girsanov's theorem and Lemma \ref{lem:girsanov_applicable}, using that
\begin{align*}
\EE[\exp(\lambda \|h^\zeta\|^2)_{\cH}]& \leq  \sum_{i=1}^2  \EE[\exp(4 \lambda \|h^i\|^2_{\cH})]   \\
& \leq \sum_{i=1}^2 \EE[\frac{d\QQ^i}{d\PP}^2]^{1/2}  \EE_{\QQ^i} \left[ \exp(8\lambda \|h^i\|_{\mathcal{H}^{H,T}}^2)\right]^{1/2}.
\end{align*}

We can then apply Theorem \ref{thm:continuity} with the same parameters as above, to obtain that
\begin{align*}
\left\| A^{\zeta} \right\|_{q,[0,T]}^{q} &\lesssim \left\| K^{\zeta} \right\|_{1/\beta, [0,T]}^{q \eta} + (R^\zeta)^{q}  \\
&\lesssim  \left\| K^{1} \right\|_{1/\beta, [0,T]}^{q \eta} +  \left\| K^{2} \right\|_{1/\beta, [0,T]}^{q \eta} +   (R^\zeta)^{q}
\end{align*}
for some $\gamma>0$, where $R^\zeta$ has Gaussian tails under $\QQ^\zeta$. It therefore suffice to show that exponential moments of the three terms above are bounded. This follows from another simple application of Girsanov. Indeed,
\[ \EE[\exp(c  (R^\zeta)^{q})] \leq  \EE_{\QQ^\zeta}\left[\exp(2c  (R^\zeta)^{q})\right]^{1/2} \EE_{\QQ^\zeta}\left[\left(\frac{d\QQ^\zeta}{d\PP}\right)^2\right]^{1/2} < \infty, \]
and similarly, since $K^{i} = \Gamma(x_0+\tilde{W}^i) - (x_0+\tilde{W}^i) = C(x_0+\tilde{W}^i)$ where $\tilde{W}^i$ is a fBm under $\QQ^i$, we can apply the integrability assumption \eqref{eq:asnGammaW} to obtain that
\begin{align*}
 \EE[\exp(c\left\| K^{i} \right\|_{1/\beta, [0,T]}^{q \eta})] \leq\EE_{\QQ^i}\left[\exp(\left\| C(x_0+\tilde{\wh}^i)\right\|_{1/\beta, [0,T]}^{q \eta})\right]^{1/2} \EE_{\QQ^i}\left[\left(\frac{d\QQ^i}{d\PP}\right)^2\right]^{1/2}
 \end{align*}
 is bounded for any $c>0$, with bounds that only depend on the $f^i$ through their norms in $E$.
 
 This concludes the proof of \eqref{eq:stab} for smooth $f^1$, $f^2$.
\end{proof}

We conclude the proof of Theorem \ref{thm:main} by proving strong existence and validity of the stability estimates for general $f$.

\begin{proof}[Proof of strong existence]
Let us quickly recall why the stability estimates yield strong existence (see e.g. \cite{CG16} for similar arguments).

Let $f^n$ be smooth, with $f^n \to f$ in $E$. Let $X^n$ be the associated strong solution to the equation

\begin{equation} \label{eq:fpn}
X^n = \Gamma \left(x_0 + \int_0^{\cdot}T^W f^n(ds, X^n_s - W_s) + W \right).\end{equation}

By the stability estimates, passing to a subsequence if necessary, we can assume that
\[ X^n \to X \mbox{ in }L^m(\Omega, \vvv{q}_t), \]
for some $q<2$, and it only remains to show that $X$ is (a.s.) a solution to the equation with $f$. This follows from the fact that $T^W f^n \to T^W f$ in $\ccc{1/q}_t \ccc{1}_x$, along with continuity of Young integration.

The stability estimate for arbitrary $f^1$, $f^2$ similarly follows by taking smooth approximations $f^{n,1}$, $f^{n,2}$ and passing to the limit.
\end{proof}

\section{Reflected and perturbed equations}\label{sec:RefPert}

In this section, we show that both reflected and perturbed equations, as described in the introduction, fall in the framework of Section \ref{sec:FixedPointEqs}.

\subsection{Reflected equations}

Let us fix a domain $D= \Pi_{i=1}^d [a_i, b_i]  \subset \RR^n$, for some  $-\infty \leq  a_i<b_i \leq +\infty$.

For any continuous function $\phi : [0,T] \to \RR^n$ with $\phi(0) \in D$, there exists a unique pair $(\psi, k)$ of continuous functions such that $\psi=\phi+k$ and
\[\; \forall t \in[0,T], \psi(t) \in D, \]
$k$ has bounded variation and satisfies
\[dk^i(t) = \left(1_{\psi^i(t) = a_i} - 1_{\psi^i(t) = a_i}\right) |dk^i(t)|.\]

The map $\Gamma_D :\phi \mapsto \psi$ is called the Skorokhod map. 
In this section, we will prove the following.

\begin{proposition}
$\Gamma_D$ satisfies Assumption \ref{asn:Gamma} with $\beta=1$ and $\eta = (H+\frac{1}{2}) \wedge 1$.
\end{proposition}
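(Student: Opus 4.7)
The plan is to verify, in order, the four components of Assumption~\ref{asn:Gamma}: adaptedness, the $p$-variation Lipschitz bound \eqref{eq:LipGamma}, the H\"older bound \eqref{eq:HolderGamma}, and the exponential integrability \eqref{eq:asnGammaW} of $\|C(W^H)\|_{1-var}$. Adaptedness is immediate from the defining property of the Skorokhod problem, since $\Gamma_D(\phi)_t$ depends only on $(\phi_s)_{s \le t}$. The product structure $D = \prod_{i=1}^d [a_i,b_i]$ implies that the Skorokhod problem decouples coordinatewise, so at each step I can reduce to the one-dimensional case of reflection on an interval $[a,b]$ (with possibly infinite endpoints) and then reassemble using independence of the scalar components $W^i$ of $W^H$.

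In the scalar case, the Lipschitz estimate \eqref{eq:LipGamma} is exactly one of the main results of Falkowski and S\l ominski \cite{FS15,FS22}, which target precisely the Skorokhod map on an interval. For the H\"older bound \eqref{eq:HolderGamma}, I would use the classical sup-norm Lipschitz property of the scalar Skorokhod map together with its flow property: restarting the problem at any time $s$ yields $|\Gamma_{[a,b]}(w)_{st}| \ls \|w - w_s\|_{\infty;[s,t]} \le \|w\|_{\ccg;[0,T]}(t-s)^{\gamma}$, which after dividing by $(t-s)^{\gamma}$ gives the required control of the $\ccg$ seminorm.

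The main work is then the integrability \eqref{eq:asnGammaW} for $K := C(W^H) = \Gamma_D(W^H) - W^H$. By construction $K$ is of bounded variation, so $\|K\|_{1-var;[0,T]}$ is a.s.\ finite, and only the tail behaviour needs attention. When $H \ge 1/2$ we have $\eta = 1$, and Proposition~\ref{prop:besovembedding} provides the embedding $\mathcal{H}^{H,T} \hookrightarrow \vvv{1}$. Combined with \eqref{eq:LipGamma} at $p=1$, this makes $W \mapsto \|K(W)\|_{1-var}$ Lipschitz in the Cameron-Martin norm, and Borell's inequality (precisely as codified by Remark~\ref{rmk:TailsH12}) yields Gaussian tails, that is $\EE[\exp(c \|K\|_{1-var}^2)] < \infty$ for every $c > 0$.

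The main obstacle is the regime $H < 1/2$, where $\eta = H+1/2 < 1$, the embedding $\mathcal{H}^{H,T} \hookrightarrow \vvv{q}$ only holds for $q \ge (H+1/2)^{-1} > 1$, and Gaussian concentration does not directly produce tails in the $1$-variation norm. For this case I would invoke Proposition~\ref{thm:expintegrbox}, introduced earlier precisely to supply the sub-Gaussian type estimate $\EE[\exp(\|K\|_{1-var}^{2H+1})]<\infty$ for the scalar reflection measure of fBm on an interval, whose proof rests on the iterated sup/inf representation of the scalar Skorokhod map on $[a,b]$. One then assembles the $d$ coordinates: by independence of the $W^i$, the one-dimensional exponential moments multiply, giving \eqref{eq:asnGammaW} for the full $d$-dimensional process with $\beta = 1$ and $\eta = (H+1/2) \wedge 1$.
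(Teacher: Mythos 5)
Your reduction to the scalar case, the use of Falkowski--S\l omi\'nski for \eqref{eq:LipGamma}, the sup-norm Lipschitz property of the Skorokhod map for \eqref{eq:HolderGamma}, and the treatment of $H\geq 1/2$ via Remark \ref{rmk:TailsH12} all coincide with the paper's argument. The problem is the case $H<1/2$, which is where all the actual work lies: you ``invoke Proposition \ref{thm:expintegrbox}'' as if it were a previously established fact, but in the paper that proposition is not an input to the result you are proving --- it \emph{is} the core of its proof (it is stated and proved immediately after the proposition under review, precisely to handle the compact-interval case for $H<1/2$). Citing it as a black box therefore leaves the only nontrivial part of \eqref{eq:asnGammaW} unproved. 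Moreover, the one-line justification you offer for it (``the iterated sup/inf representation of the scalar Skorokhod map on $[a,b]$'') is not how the estimate is obtained and does not obviously yield it: the paper's mechanism is an oscillation-counting argument. Lemma \ref{lem:konevarnumber} shows $\|K\|_{1-var;[0,T]}\leq N_{1,T}(W)+1$, where $N_{\delta,T}(W)$ counts successive oscillations of size $\delta$; then the subadditivity $N_{1,T}(W+h)\leq N_{1/2,T}(W)+N_{1/2,T}(h)$, the bound $N_{1/2,T}(h)^{1/q}\lesssim \|h\|_{q-var}$ with $q=(H+1/2)^{-1}$, and the Cameron--Martin embedding \eqref{eq:jainmonrad} allow Gaussian concentration (Proposition \ref{prop:GaussConc}) to be applied to $N_{1,T}(W)^{1/2+H}$, giving exactly the exponent $1+2H$ in \eqref{eq:IntK}. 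Without reproducing (or replacing) this chain of estimates, your proof of the $H<1/2$ case is circular within the paper's logical structure.

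Two smaller points. First, Proposition \ref{thm:expintegrbox} as stated covers only the compact interval (after scaling, $[0,1]$); the half-line case $[a,\infty)$ or $(-\infty,b]$ is not covered by it and needs the separate (easy) observation that there $K$ is monotone, so $\|K\|_{1-var;[0,T]}$ equals its terminal value, which has Gaussian tails by Fernique --- your blanket reduction to ``$[a,b]$ with possibly infinite endpoints'' followed by an appeal to Proposition \ref{thm:expintegrbox} glosses over this. Second, the coordinatewise reassembly is fine but does not require independence of the $W^i$; a H\"older (or simply termwise) argument on $\exp\bigl(\sum_i \|K^i\|_{1-var}^{2\eta}\bigr)$ suffices, which is worth noting since the scalar estimates are stated path by path.
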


First, note that since 
\[ \Gamma_D(\phi) = \left(\Gamma_{[a_1,b_1]}(\phi^1), \ldots,\Gamma_{[a_d,b_d]}(\phi^d)\right), \]
we can assume w.l.o.g. that $d=1$, and by scaling we can assume that $D$ is either $[0,1]$ or $[0,\infty)$.

It is then classical that $\Gamma$ is Lipschitz continuous on $C^0$ (equipped with supremum norm), see e.g. \cite{KLRS07}. This is readily seen to imply the H\"older bound \eqref{eq:HolderGamma}, by noting that
\[\left | \Gamma_D(\phi(t)) - \Gamma_D(\phi(s)) \right| \leq C \left| \phi(t) - \phi(s)\right| \]
(simply by comparing $\phi$ on $[s,t]$ with the constant path $\phi'\equiv \phi(s)$).

The fact that $\Gamma_D$ is Lipschitz-continuous on $p$-variation spaces was shown by Falkowski and Słomiński \cite{FS15},\cite{FS22} .
%
%

We then proceed to prove the integrability estimate \eqref{eq:asnGammaW}. Note that by Remark \ref{rmk:TailsH12}, it suffices to consider $H< \frac 1 2$. 

If $D=[0,\infty)$, $\Gamma_D(w)-w = -\inf_{s \leq \cdot} (w_s\wedge 0)$, which is non-decreasing on $[0,T]$, so that its $1$-variation norm is bounded by its value at time $T$, and the latter has Gaussian tails by Fernique's estimate. 
 
It remains to consider the case $D=[0,1]$, which is the content of the following proposition. 

\begin{proposition}\label{thm:expintegrbox}
Let $W$ be a fractional Brownian motion of Hurst index $H \in (0,1/2)$, $\Gamma_{[0,1]}(W)$ its reflection on $D=[0,1]$, and $K=\Gamma(W)-W$ its reflection measure. Then for any $T>0$, there exists $\lambda>0$ s.t.
\begin{equation} \label{eq:IntK}
\EE \left[\exp \left( \lambda \norm{K}_{\vvv{1},[0,T]}^{1+2H} \right) \right] < \infty.
\end{equation}
\end{proposition}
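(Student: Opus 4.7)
By Remark \ref{rmk:TailsH12}, it suffices to treat $H \in (0, 1/2)$. The strategy is to reduce the claim to an exponential tail estimate for an oscillation-counting functional of $W$, which will then follow by combining non-positive correlation of non-overlapping fBm increments with the local non-determinism property.

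For the deterministic step, consider continuous $w : [0, T] \to \RR$ with $w_0 \in [0, 1]$, set $k = \Gamma_{[0,1]}(w) - w$ and define
\[
N_T(w) = \sup\left\{n : \exists\, 0 \leq t_0 < \ldots < t_n \leq T \text{ with } |w_{t_i} - w_{t_{i-1}}| \geq 1 \text{ for all } i\right\}.
\]
I plan to show $\|k\|_{1-var,[0,T]} \leq C(1 + N_T(w))$. Setting $\psi = w + k$, introduce the alternating sequence $s_1 < s_2 < \ldots$ of first times at which $\psi$ reaches the boundary opposite to the one it touched at $s_{j-1}$. Each interval $[s_j, s_{j+1}]$ splits into a \emph{sticking} sub-interval $[s_j, s_j']$, on which $\psi$ is attached to one boundary so that $k$ is monotone with total variation $\Delta_j$ equal to the depth $w$ travels away from its boundary value, and a \emph{free} sub-interval $[s_j', s_{j+1}]$, on which $k$ is constant and $w$ must move by exactly $1$. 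These observations simultaneously give $J \leq N_T(w)$ (where $J$ is the number of switches, by counting the $w$-movements on the free sub-intervals) and $\sum_j \Delta_j \leq N_T(w)$ (since a depth $\Delta_j$ forces $\lfloor \Delta_j \rfloor$ disjoint unit-jumps of $w$ during the sticking phase).

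The probabilistic core is then $\PP(N_T(W) \geq n) \leq C \exp(-c\, n^{1+2H})$. For any partition $0 = t_0 < \ldots < t_n \leq T$, let $\Sigma$ be the covariance matrix of the Gaussian vector $Y = (W_{t_i} - W_{t_{i-1}})_{i=1}^n$. Since $x \mapsto x^{2H}$ is concave for $H<1/2$, non-overlapping fBm increments have non-positive covariance; Gerschgorin then yields $\lambda_{\max}(\Sigma) \leq 2 \max_i (t_i - t_{i-1})^{2H}$, while local non-determinism of fBm gives $\det \Sigma \geq c^n \prod_i (t_i - t_{i-1})^{2H}$. Combining these with the Gaussian density estimate
\[
\PP(|Y_i| \geq 1 \text{ for all } i) \leq \frac{(2\lambda_{\max})^n}{(2\pi)^{n/2}\sqrt{\det \Sigma}} \exp\!\left( -\frac{n}{2 \lambda_{\max}} \right),
\]
and Jensen's inequality applied to the convex function $x \mapsto x^{-2H}$ (so that the relevant exponent is minimized at equally-spaced $t_i - t_{i-1} = T/n$), yields a bound of order $C^n \exp(-c\, n^{1+2H}/T^{2H})$. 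A union bound over partitions on a grid of mesh $\delta \sim n^{-(1+2H)/(2H)}$, with the discretization error controlled via Fernique's estimate on $\sup_{|s-t|\leq \delta} |W_s - W_t|$, completes the argument: the combinatorial factor $\binom{T/\delta}{n}$, which is of order $\exp(O(n \log n))$, is absorbed since $n \log n = o(n^{1+2H})$.

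Combining both steps via a layer-cake integration then yields \eqref{eq:IntK} for any $\lambda$ sufficiently small. The main obstacle is establishing the determinant lower bound on $\Sigma$ uniformly in the partition, which relies on the local non-determinism of fBm; the deterministic bound is elementary but requires careful bookkeeping of the two-sided Skorokhod structure.
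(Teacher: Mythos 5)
Your deterministic step is essentially the paper's Lemma \ref{lem:konevarnumber}: up to bookkeeping, $\|K\|_{1-var,[0,T]}$ is bounded by the number of unit oscillations of $W$ plus a constant, and that part is fine. The probabilistic step is where you genuinely depart from the paper, and as written it has a gap. Your per-partition bound has exponent $-n/(2\lambda_{\max})$ with $\lambda_{\max}(\Sigma)\leq 2\max_i\Delta_i^{2H}$, $\Delta_i=t_i-t_{i-1}$. This quantity is \emph{not} minimized at equal spacing, and Jensen applied to $x\mapsto x^{-2H}$ says nothing about it: for a partition containing one macroscopic gap $\Delta_{i_0}\approx T$ together with many mesh-size gaps, the exponent is only of order $n/T^{2H}$, i.e. $e^{-cn}$, while the determinant prefactor $\prod_i\Delta_i^{-H}$ and the union-bound entropy $\binom{T/\delta}{n}$ are both of order $e^{Cn\log n}$; the bound therefore does not close for such partitions, even though the true probability is tiny (the smallness comes from the short increments, which your $\lambda_{\max}$-based exponent discards). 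The fix is to upgrade the Gershgorin estimate to the operator bound $\Sigma\preceq 2\,\mathrm{diag}(\Delta_1^{2H},\ldots,\Delta_n^{2H})$ — the same negative-correlation row-sum computation gives $c^{T}\Sigma c\leq\sum_i c_i^2\bigl(\Sigma_{ii}+\sum_{j\neq i}|\Sigma_{ij}|\bigr)\leq 2\sum_i c_i^2\Delta_i^{2H}$ — so that $y^{T}\Sigma^{-1}y\geq\frac12\sum_i y_i^2\Delta_i^{-2H}$. Then the exponent becomes $\frac14\sum_i\Delta_i^{-2H}$, to which Jensen does apply and yields $\geq\frac14 n^{1+2H}T^{-2H}$ uniformly over partitions; with this (plus a slightly finer mesh so that the Fernique term is also $e^{-cn^{1+2H}}$) your scheme goes through, and in fact a Chernoff bound on $\sum_i Y_i^2\Delta_i^{-2H}$ then lets you dispense with the local non-determinism determinant bound altogether.

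For comparison, the paper's route after Lemma \ref{lem:konevarnumber} is much softer: it never touches covariance matrices or union bounds, but uses Gaussian concentration (Proposition \ref{prop:GaussConc}) together with the Cameron--Martin variation embedding $\mathcal{H}^{H,T}\subset\vvv{q}$ with $q=(1/2+H)^{-1}$, via the elementary inequalities $N_{1,T}(W+h)\leq N_{1/2,T}(W)+N_{1/2,T}(h)$ and $N_{1/2,T}(h)^{1/q}\leq 2\|h\|_{q-var}\lesssim\|h\|_{\mathcal{H}^{H,T}}$, which immediately gives Gaussian tails for $N_{1,T}(W)^{1/2+H}$, i.e. the exponent $1+2H$. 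Your approach, once repaired as above, buys a hands-on small-ball-type estimate with explicit constants and no appeal to Borell's inequality, but at the cost of the discretization, entropy and non-determinism machinery that the concentration argument avoids entirely.
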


In order to prove the theorem, given $W \in \Omega= C([0,T],\RR)$, $\delta >0$, define the successive times 
\[ \tau_0=0, \; \tau_{k+1} = \inf \left\{t \geq \tau_k, \sup_{s \in [\tau_k, t]} |W(t)-W(s)| =\delta\right\} \]
and
\[N_{\delta,T}(W) = \sup \{ k, \tau_k < T \}. \]

The first step is the following lemma.

\begin{lemma}\label{lem:konevarnumber}
 $ \|K\|_{1-var,[0,T]} \leq N_{1,T}(W) +1 $.
\end{lemma}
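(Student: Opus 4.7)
My plan is to partition $[0,T]$ into sub-intervals on each of which $W$ oscillates by at most $\delta = 1$, and show that on each such sub-interval $\|K\|_{1-var} \leq 1$. With $N := N_{1,T}(W)$, the intervals $[\tau_k, \tau_{k+1}]$ for $k = 0, \ldots, N-1$ together with $[\tau_N, T]$ form a partition of $[0,T]$ into $N+1$ pieces on each of which $|W_r - W_{r'}| \leq 1$ for all $r, r'$, directly by the definition of the $\tau_k$. Since $1$-variation is additive across such a partition, the desired bound will follow from the following key claim: whenever $I = [s,t] \subseteq [0,T]$ satisfies $\sup_{r, r' \in I} |W_r - W_{r'}| \leq 1$, one has $\|K\|_{1-var, I} \leq 1$.

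To prove the claim, I would use the Skorokhod decomposition $K = L - U$ with $L, U$ continuous non-decreasing and $dL$ (resp.\ $dU$) supported on $\{X = 0\}$ (resp.\ $\{X = 1\}$), so that $\|K\|_{1-var, I} = (L_t - L_s) + (U_t - U_s)$. The hypothesis $\mathrm{osc}_I(W) \leq 1$ is equivalent to the nonemptiness of the common intersection $J := [-\inf_I W, \, 1 - \sup_I W]$ of the tube $\{[-W_r, \, 1 - W_r]\}_{r \in I}$ in which $K$ is constrained to live. The core observation is that if $K$ touched both boundaries on $I$, then letting $u^{**}$ be the last $0$-touch before a subsequent $1$-touch $v^{**}$, $K$ must be constant on $(u^{**}, v^{**})$, which forces $W_{v^{**}} - W_{u^{**}} = X_{v^{**}} - X_{u^{**}} = 1$. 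In the strict case $\mathrm{osc}_I W < 1$ this is a contradiction, so $K$ touches at most one boundary and is therefore monotone on $I$; using $K_r = -W_r$ at $0$-touches (resp.\ $K_r = 1 - W_r$ at $1$-touches) together with monotonicity of the non-constant one of $L, U$, the variation is bounded by $\mathrm{osc}_I W < 1$. In the borderline case $\mathrm{osc}_I W = 1$ the intersection degenerates to a single point $J = \{K^*\}$ with $K^* = -\inf_I W = 1 - \sup_I W$; an elementary bookkeeping with the explicit values of $K$ at boundary touches, and using that the single possible transition sub-interval $(u^{**}, v^{**})$ already consumes the full oscillation of $W$, shows that after at most one monotone move from $K_s$ to $K^*$ the value of $K$ is pinned at $K^*$, so no further variation accumulates and $\|K\|_{1-var, I} = |K_s - K^*| \leq \mathrm{osc}_I W = 1$.

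Summing the bound $\|K\|_{1-var, I_i} \leq 1$ over the $N+1$ sub-intervals then yields $\|K\|_{1-var, [0,T]} \leq N_{1,T}(W) + 1$. The main technical obstacle is the borderline analysis when $\mathrm{osc}_I W = 1$ and both boundaries are visited on $I$; the rigid structure of the two-sided Skorokhod problem on $[0,1]$, and in particular the fact that each transition between an $L$-increase phase and a $U$-increase phase exhausts the full oscillation budget available on $I$, is what prevents the total variation from doubling in this case.
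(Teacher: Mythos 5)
Your strategy is sound and is essentially the dual of the paper's argument rather than the same proof. The paper introduces the stopping times $\rho_k=\inf\{t:\|K\|_{1-var,[0,t]}\ge k\}$ and shows by induction that $\tau_k\le\rho_k$: on $[\rho_k,\rho_{k+1}]$ either $X=\Gamma(W)$ crosses from one boundary to the other, and then $W$ moves by exactly $1$ between the last touch of the starting boundary and the first touch of the other one (where $K$ is constant), or $K$ is monotone with increment $1$ and $X$ sits at the same boundary at both endpoints, forcing $|W(\rho_{k+1})-W(\rho_k)|=1$. Since the inequality needed there is ``$W$ oscillates by \emph{at least} $1$'' on each $[\rho_k,\rho_{k+1}]$, no borderline case appears. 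Your route (partition at the $\tau_k$ and prove the local statement $\mathrm{osc}_I W\le 1\Rightarrow\|K\|_{1-var,I}\le 1$) is perfectly viable and arguably more transparent, but it genuinely needs the extra case $\mathrm{osc}_I W=1$, and that is where your sketch is inaccurate: there is \emph{not} a single possible transition sub-interval. When $\mathrm{osc}_I W=1$, $W$ may shuttle between $\inf_I W$ and $\sup_I W$ arbitrarily often, so $X$ may cross from $0$ to $1$ and back many times. (Also, you should treat the symmetric configuration where a $1$-touch precedes a $0$-touch, and note that ``$K$ touches both boundaries'' should read ``$X$ touches both boundaries''.)

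What rescues the borderline case is exactly the pinning you assert, but proved via the push levels rather than via uniqueness of the crossing. During any crossing the constant value of $K$ must equal the single common level $K^*:=-\inf_I W=1-\sup_I W$. Moreover $dL$ is carried on $\{X=0\}$, where $K=-W\le K^*$, and $dU$ on $\{X=1\}$, where $K=1-W\ge K^*$; hence $K$ can never leave $K^*$ once it reaches it (an excursion above $K^*$ would require an increase occurring at a level $>K^*$, an excursion below a decrease at a level $<K^*$: take the last time $K=K^*$ before the excursion to get a contradiction). Before the first crossing $X$ touches at most one boundary, so $K$ is monotone there, and the same level constraints give $|K_{t}-K_{s}|\le |K^*-K_s|\le 1$ on $I$ (this also covers the case with no crossing, where the bound is $\mathrm{osc}_I W$). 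With this correction your local claim holds, summation over the $N_{1,T}(W)+1$ intervals gives the lemma, and the comparison of the two proofs is: yours isolates a clean deterministic oscillation-to-variation estimate for the two-sided Skorokhod map, at the cost of the delicate $\mathrm{osc}=1$ analysis; the paper's choice of running the induction on the $K$-variation times $\rho_k$ buys precisely the absence of that borderline case.
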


\begin{proof}
Let $\rho_k = \inf \{t, \norm{K}_{1,[0,t]} \geq k\}$, it suffices to show that $\rho_k \leq \tau_{k}$ with $\tau_k$ defined above for $\delta=1$. 

At time $\rho_k$, $\Gamma(W)$ must be at $0$ or $1$, we assume it is at $0$ (the other case is the same). Now we consider the interval $[\rho_k, \rho_{k+1}]$, on which there are two possibilities :

1) either $\Gamma(W)$ touches $1$ on this interval. Let $t$ be the first time at which this happens, and $s$ the last time before $t$ where $\Gamma(W)$ is at $0$. Then $W(t)-W(s) = \Gamma(W)(t)-\Gamma(W)(s) = 1$. In particular, if $\tau_k \leq \rho_k$, then $\tau_{k+1} \leq \rho_{k+1}$.

2) otherwise, $\Gamma(W)$ does not touch $1$. This means that $dK \geq 0$ and $K(\rho_{k+1}) - K(\rho_k) = 1$, and also $\Gamma(W)(\rho_{k+1}) = \Gamma(W)(\rho_k)= 0$, so that $W(\rho_{k+1}) - W(\rho_k) = -1$, and again $\rho_{k+1} \geq \tau_{k+1}$ if $\rho_k \geq \tau_k$.

\end{proof}

We then use Gaussian concentration of measure in the following form (see e.g. \cite{FH20} Theorem 11.7) 
\begin{proposition} \label{prop:GaussConc}
Given $F: \Omega \to \RR$, assume that for some $G : \Omega \to \RR$  which is $\PP^H$ a.s. finite, and some constant $\sigma>0$, it holds that
\begin{equation}
F(W+h) \leq G(W) + \sigma \norm{ h }_{\mathcal{H}^{H,T}}.
\end{equation}
Then $F(W)$ has Gaussian tails under $\PP^H$, i.e. $\exists \lambda>0, \; \EE^H[ \exp(\lambda F^2)]<\infty$.
\end{proposition}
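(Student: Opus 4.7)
The plan is to deduce this from the Borell--Sudakov--Tsirelson (Borell--TIS) isoperimetric inequality for Gaussian measures on path space; the one-sided shift estimate reduces to it after a standard level-set argument. The genuine depth lies in Borell--TIS itself, which I would quote as a black box (e.g. Theorem 11.7 in \cite{FH20}). The rest is a short reduction, whose only subtle point is handling the fact that $G$ is merely a.s. finite, not bounded.

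First, since $G$ is $\PP^H$-a.s. finite, I pick $M > 0$ large enough that the sublevel set $A := \{w : G(w) \leq M\}$ satisfies $\PP^H(A) \geq 1/2$. For any radius $r > 0$, consider the Cameron--Martin enlargement $A + B_{\mathcal{H}}(r) := \{w_0 + h : w_0 \in A,\ \|h\|_{\mathcal{H}^{H,T}} \leq r\}$. For any $w$ in this enlargement, writing $w = w_0 + h$ with $w_0 \in A$ and $\|h\|_{\mathcal{H}^{H,T}} \leq r$, the hypothesis applied to the pair $(w_0, h)$ gives
$F(w) = F(w_0 + h) \leq G(w_0) + \sigma \|h\|_{\mathcal{H}^{H,T}} \leq M + \sigma r.$
Consequently $\{F > M + \sigma r\} \subseteq (A + B_{\mathcal{H}}(r))^c$, and the problem is reduced to bounding the $\PP^H$-measure of the complement of an $\mathcal{H}$-fattening of a set of mass $\geq 1/2$.

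Here I invoke Borell--TIS: for any measurable set of positive measure,
$\PP^H(A + B_{\mathcal{H}}(r)) \geq \Phi\!\bigl(\Phi^{-1}(\PP^H(A)) + r\bigr),$
and since $\PP^H(A) \geq 1/2$ implies $\Phi^{-1}(\PP^H(A)) \geq 0$, we obtain $\PP^H((A + B_{\mathcal{H}}(r))^c) \leq 1 - \Phi(r) \leq e^{-r^2/2}$. Setting $t = M + \sigma r$ yields $\PP^H(F(W) > t) \leq e^{-(t-M)^2/(2\sigma^2)}$ for every $t > M$, which is a Gaussian upper tail for $F(W)$. In all intended applications $F$ is a nonnegative functional (a variation norm, an upcrossing count, etc.), so this upper tail immediately gives $\EE^H[\exp(\lambda F(W)^2)] < \infty$ for any $\lambda < 1/(2\sigma^2)$ by the layer-cake formula. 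The moral is that a functional which is one-sidedly Lipschitz in the Cameron--Martin direction, up to the finite random correction $G$, automatically enjoys Gaussian concentration; no step is delicate beyond the Borell--TIS inequality itself.
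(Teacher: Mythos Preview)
The paper does not give its own proof of this proposition; it merely states it with a reference to \cite{FH20}, Theorem 11.7 (the generalised Fernique theorem). Your proposal is precisely the standard proof found there: the level-set reduction to Borell's isoperimetric inequality on abstract Wiener space is exactly how that theorem is established, and your execution of the reduction is correct.

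Your closing caveat is well placed and worth keeping. The conclusion $\EE[\exp(\lambda F^2)]<\infty$ genuinely requires control of the lower tail of $F$, which the one-sided hypothesis does not provide; the version in \cite{FH20} is in fact stated for $F:\Omega\to[0,\infty]$. The paper's statement with $F:\Omega\to\RR$ is slightly loose, but since every application in the paper (variation norms of $C(W^H)$, the upcrossing count $N_{1,T}^{1/q}$) has $F\geq 0$, nothing is affected.
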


The lemma below is then a simpler version of \cite{FH20}, Thm 11.13.

\begin{lemma} 
Under $\PP^H$, $N_{1,T}(W)^{1/2+H}$ has Gaussian tails.
\end{lemma}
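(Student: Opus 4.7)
The plan is to invoke Proposition~\ref{prop:GaussConc} applied to $F(W) = N_{1,T}(W)^{1/2+H}$. For this I need to find an a.s.\ finite $G(W)$ and a constant $\sigma>0$ such that, for every Cameron--Martin shift $h$,
\[ F(W+h) \le G(W) + \sigma \|h\|_{\mathcal{H}^{H,T}}. \]
The natural candidate is $G(W) = N_{1/2,T}(W)^{1/2+H}$, which is a.s.\ finite since a continuous path on $[0,T]$ admits only finitely many oscillations of fixed size $1/2$ (by uniform continuity). The exponent $1/2+H$ in the statement is dictated exactly by the Cameron--Martin embedding available for fBm with $H<1/2$.

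The deterministic core is a dissection estimate of the form
\[ N_{1,T}(W+h) \le N_{1/2,T}(W) + 2^q \|h\|_{q-var,[0,T]}^q, \qquad q := 1/(H+1/2) > 1. \]
To prove it, I denote by $\tilde\tau_k$ the stopping times defining $N := N_{1,T}(W+h)$ and set $I_k = [\tilde\tau_k,\tilde\tau_{k+1}]$, $k=0,\ldots,N-1$. On each $I_k$ the oscillation of $W+h$ is at least $1$, so at least one of $\mathrm{osc}_{I_k}(W)$, $\mathrm{osc}_{I_k}(h)$ is at least $1/2$. I split the indices into $A$ (first alternative) and $B$ (second). Since the intervals $I_k$ are disjoint, we get
\[ |B|\,(1/2)^q \le \sum_{k \in B} \mathrm{osc}_{I_k}(h)^q \le \|h\|_{q-var,[0,T]}^q. \]
For $A$, I run the stopping times $\sigma_j$ defining $N_{1/2,T}(W)$ in parallel to the indices in $A$: an easy induction shows that $\sigma_j$ is reached within the $j$-th interval of $A$ (inside each such interval there exist two points at which $W$ differs by $1/2$, which forces $\sup_{s \in [\sigma_{j-1},\cdot]}|W(\cdot)-W(s)|$ to reach $1/2$ before the end of the interval). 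Hence $|A| \le N_{1/2,T}(W)$.

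Given the dissection estimate, Proposition~\ref{prop:besovembedding} supplies the embedding $\|h\|_{q-var,[0,T]} \le C\|h\|_{\mathcal{H}^{H,T}}$ (valid for $q \ge 1/(H+1/2)$ and $H<1/2$). Since $1/q = H+1/2 \in (1/2,1]$, subadditivity of $x \mapsto x^{1/q}$ converts the bound into
\[ N_{1,T}(W+h)^{1/2+H} \le N_{1/2,T}(W)^{1/2+H} + 2C\|h\|_{\mathcal{H}^{H,T}}, \]
which is exactly the Cameron--Martin inequality required by Proposition~\ref{prop:GaussConc}, with $G(W)=N_{1/2,T}(W)^{1/2+H}$ and $\sigma = 2C$. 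Invoking that proposition concludes the proof.

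The only non-routine step is the combinatorial comparison $|A|\le N_{1/2,T}(W)$, i.e.\ the induction showing that the $\sigma_j$ fit inside the selected intervals; everything else is either an elementary $\ell^q$ bound on disjoint increments, the standard Besov--Cameron--Martin embedding, or concavity. No Gaussian-specific computation is needed beyond the black-box application of Proposition~\ref{prop:GaussConc}.
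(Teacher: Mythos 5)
Your proof is correct and follows essentially the same route as the paper: the same splitting $N_{1,T}(W+h)\leq N_{1/2,T}(W)+(\text{a quantity controlled by }2^q\|h\|_{q-var,[0,T]}^q)$, subadditivity of $x\mapsto x^{1/q}$ with $q=1/(H+1/2)$, the Cameron--Martin variation embedding, and Gaussian concentration via Proposition \ref{prop:GaussConc}. The only difference is that your dissection/induction argument spells out in detail the step the paper dismisses as obvious, namely $N_{1,T}(W+h)\leq N_{1/2,T}(W)+N_{1/2,T}(h)$ together with the bound $N_{1/2,T}(h)^{1/q}\leq 2\|h\|_{q-var,[0,T]}$.
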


\begin{proof}
Note that one obviously has
\[N_{1,T}(W+h) \leq N_{1/2,T}(W) + N_{1/2,T}(h).\]
Letting $q:=1/(1/2+H) \in [1,2)$, this implies by subadditivity that 

\[ N_{1,T}(W+h)^{1/q} \leq N_{1/2,T}(W)^{1/q} + N_{1/2,T}(h)^{1/q}.\]
On the other hand, it is readily checked that

\[ N_{1/2}(h)^{1/q} \leq  2 \|h\|_{q-var;[0,T]} \]
and to conclude, we use the fact that for this choice of $q$, $\| h\|_{q-var;[0,T]} \lesssim \| h\|_{\mathcal{H}^{H,T}}$, which was proven in \cite{FGGR16}.

\end{proof}

Proposition \ref{thm:expintegrbox} is then obtained by combining the previous Lemma and Lemma \ref{lem:konevarnumber}.

\begin{remark}
One can check that the exponent $1+2H$ in Proposition \ref{thm:expintegrbox} is sharp. Indeed, let the path $h_N$ be given by
\[ h_N(t) = 2 \cos(2\pi N t/T), t \in [0,T].\]
Then it is easy to see that, letting $K(h_N) = \Gamma_{[0,1]}(h_N) - h_N$, $\|K(h_N)\|_{1-var}$ is of order $N$, and that this is also true for any path $w$ with $\|w-h_N\|_{\infty} \leq \frac{1}{2}$. On the other hand, by the Cameron-Martin theorem, this happens for fBm paths with probability greater than $C\exp\left(- \frac{1}{2} \| h_N\|^2_{\mathcal{H}^{H,T}}\right)$. Since, in addition, for any $\gamma > H+1/2$, it holds that $\|h_N\|_{\mathcal{H}^{H,T}} \geq \|h_N\|_{H^\gamma} \sim N^{-\gamma}$, it follows that \eqref{eq:IntK} does not hold if $1+2H$ is replaced by a higher exponent.
\end{remark}

\subsection{Perturbed equations}

In this section, we fix $\alpha_i,\beta_t, i=1,\ldots, d$, and assume that
\begin{equation} \label{eq:asnAB}
\forall i=1,\ldots, d, \;\; \alpha_i < 1, \; \beta_i < 1, \;\; \rho(\alpha_i,\beta_i):=  \frac{| \alpha_i \beta_i|}{(1-\alpha_i)(1- \beta_i)} < 1.
\end{equation}

We will then show that perturbed equations, written component-wise as
\begin{equation} \label{eq:Pert2}
X_t^i = x_0^i + \int_0^t b^i(s,X_s) ds + W_t^i +  \alpha_i \max_{0 \leq s \leq t} X^i_s +  \beta_i \min_{0 \leq s \leq t} X^i_s.
\end{equation}
fall into the framework of Section \ref{sec:FixedPointEqs}. 

\begin{proposition}
Assume that \eqref{eq:asnAB} holds. Then there exists a map $\tilde{\Gamma}_{\alpha,\beta}$ s.t., for any continuous path $w:[0,T]\to \RR^d$, $f = \tilde{\Gamma}_{\alpha,\beta}(w)$ is the unique function $[0,T] \to \RR^d$ satisfying 
\begin{equation} \label{eq:FPf}
\forall t \geq 0, \forall i=1,\ldots,d, \; f^i(t) = w^i(t) + \alpha_i \max_{0\leq s \leq t} f^i(s) + \beta \min_{0\leq s \leq t} f^i(s). 
\end{equation}
In addition, $\tilde{\Gamma}_{\alpha,\beta}$ satisfies Assumption \ref{asn:Gamma}, with $\beta=\eta =1$.
\end{proposition}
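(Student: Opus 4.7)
The plan is to verify each requirement in Assumption \ref{asn:Gamma} separately. Since \eqref{eq:FPf} decouples across coordinates, we reduce to $d=1$; adaptedness of $\tilde{\Gamma}_{\alpha,\beta}$ is then immediate, as $M_f(t):=\max_{s\leq t}f(s)$ and $m_f(t):=\min_{s\leq t}f(s)$ depend only on $w|_{[0,t]}$. Existence and uniqueness of the continuous fixed point of $\Phi(f):=w+\alpha M_f+\beta m_f$ follows from the contraction argument of \cite{CPY98}; under the (weaker) condition \eqref{eq:asnAB} this requires a two-step iteration exploiting the monotonicity of $M_f$ and $m_f$, rather than the naive one-step contraction valid only when $|\alpha|+|\beta|<1$.

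The analytic core of both \eqref{eq:HolderGamma} and \eqref{eq:LipGamma} is a pointwise oscillation estimate. For $s<t$, put $O_M:=M_f(t)-M_f(s)\geq 0$ and $O_m:=m_f(s)-m_f(t)\geq 0$. If $O_M>0$, taking $u^\ast\in(s,t]$ as the argmax of $f$ on $[s,t]$, one has $f(u^\ast)=M_f(u^\ast)=M_f(t)$, and the fixed point identity at $u^\ast$ reads $(1-\alpha)f(u^\ast)=w(u^\ast)+\beta m_f(u^\ast)$; subtracting $(1-\alpha)M_f(s)\geq w(s)+\beta m_f(s)$ (which follows from $M_f(s)\geq f(s)$ and the fixed point equation at $s$) yields
$$(1-\alpha)\,O_M \leq \operatorname{osc}_{[s,t]}w + |\beta|\,O_m, \quad (1-\beta)\,O_m \leq \operatorname{osc}_{[s,t]}w + |\alpha|\,O_M.$$
Solving this $2\times 2$ linear system under \eqref{eq:asnAB} gives $O_M, O_m \leq C_{\alpha,\beta}\operatorname{osc}_{[s,t]}w$, hence $\operatorname{osc}_{[s,t]}f \leq C'_{\alpha,\beta}\operatorname{osc}_{[s,t]}w$. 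The H\"older bound \eqref{eq:HolderGamma} is then direct; moreover, since $\operatorname{osc}_{[s,t]}w \leq \|w\|_{p-var;[s,t]}$ for any $p\geq 1$ and the $p$-th power of this quantity is superadditive, summing over an arbitrary partition yields a single-path $p$-variation bound $\|f\|_{p-var;[0,T]} \leq C\|w\|_{p-var;[0,T]}$.

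Upgrading this to the two-path Lipschitz bound \eqref{eq:LipGamma} will be the main obstacle. Writing $g:=f_1-f_2$, $v:=w_1-w_2$, we have $g=v+\alpha(M_{f_1}-M_{f_2})+\beta(m_{f_1}-m_{f_2})$, and the task is to control the $p$-variation of $M_{f_1}-M_{f_2}$ and $m_{f_1}-m_{f_2}$ by a constant times $\|g\|_{p-var}+|g(0)|$. The plan is a case analysis on each subinterval $[t_i,t_{i+1}]$ of a partition, classifying which of $M_{f_1}, M_{f_2}$ is being updated (in the spirit of the Falkowski-S\l ominski analysis \cite{FS15,FS22} of the Skorokhod map): in each case the pointwise identities at the updating argmax combined with the difference version of the oscillation system above control the increment of the difference, and condition \eqref{eq:asnAB} again ensures the invertibility of the resulting $2\times 2$ system for the $p$-variation norms.

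Finally, for \eqref{eq:asnGammaW} with $\beta=\eta=1$: since $C(w) = \alpha M_f + \beta m_f$ is a sum of a non-decreasing and a non-increasing function,
$$\|C(W^H)\|_{1-var;[0,T]} \leq |\alpha|(M_f(T)-f(0)) + |\beta|(f(0)-m_f(T)) \lesssim |x_0| + \|W^H\|_{\infty;[0,T]},$$
by the sup-norm Lipschitz property of $\tilde{\Gamma}_{\alpha,\beta}$ (a special case of the preceding estimates). Fernique's theorem provides Gaussian tails for $\|W^H\|_{\infty;[0,T]}$, and hence a positive exponential moment for $\|C(W^H)\|_{1-var}^2$; the parameters $\beta=\eta=1$ trivially satisfy $\beta\in(H,1]$, $\eta\in(1/2,1]$, and $\eta\beta>1/2$ for any $H\in(0,1)$.
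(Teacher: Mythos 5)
Your reduction to $d=1$, the citation of \cite{CPY98} for existence and uniqueness, the oscillation-system derivation of \eqref{eq:HolderGamma} (a nice self-contained alternative to the paper, which instead deduces it from the sup-norm Lipschitz property of the contraction of \cite{CPY98}), and your verification of \eqref{eq:asnGammaW} are all fine; in fact your tail bound, using that $\alpha M_f+\beta m_f$ is a sum of monotone functions so that its $1$-variation is controlled by $\operatorname{osc}_{[0,T]}f\lesssim \|W^H\|_{\infty}$ plus Fernique, is simpler than the paper's route through Gaussian concentration and the Cameron--Martin variation embedding. The genuine gap is \eqref{eq:LipGamma}, which you yourself call ``the main obstacle'' and for which you give only a plan, not an argument. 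Worse, the target you set for that plan --- bounding $\|M_{f_1}-M_{f_2}\|_{p-var}$ and $\|m_{f_1}-m_{f_2}\|_{p-var}$ by a constant times $\|g\|_{p-var}+|g(0)|$ with $g=f_1-f_2$ --- is already available with constant $1$ (up to the initial-value term) from the Falkowski--S\l omi\'nski Lipschitz property of the running supremum applied to $f_1,f_2$; but feeding it back into $g=v+\alpha(M_{f_1}-M_{f_2})+\beta(m_{f_1}-m_{f_2})$ closes the loop only when $|\alpha|+|\beta|<1$, which is strictly stronger than \eqref{eq:asnAB} (take $\alpha=-5$, $\beta=1/2$: then $\rho(\alpha,\beta)=5/6<1$ while $|\alpha|+|\beta|=11/2$). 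So as formulated, your reduction cannot yield the full statement.

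To close the loop under \eqref{eq:asnAB} one must estimate the differences of running extrema directly in terms of $v=w_1-w_2$ and of each other, with the constants $\tfrac{1}{1-\alpha}$, $\tfrac{|\beta|}{1-\alpha}$, $\tfrac{1}{1-\beta}$, $\tfrac{|\alpha|}{1-\beta}$, i.e.\ a $p$-variation analogue of your pointwise oscillation system; and establishing such a bound at the level of $p$-variation seminorms over arbitrary partitions is exactly the nontrivial step --- pointwise identities at an updating argmax control single increments but not the $p$-variation of a difference of two running maxima, whose proof (already for the plain supremum map) is a delicate two-path case analysis. The paper avoids redoing this by combining two ingredients: the representation from \cite{CPY98} of $m^{\pm}(w)$ as the unique fixed points of the explicit maps $\phi^{\pm}(w,\cdot)$, built from iterated running suprema, and the Falkowski--S\l omi\'nski estimate $\|\sup_{\cdot}w^1-\sup_{\cdot}w^2\|_{p-var}\leq\|w^1-w^2\|_{p-var}$; applied twice, the latter shows $\phi^{\pm}(w,\cdot)$ is a contraction in $p$-variation with constant exactly $\rho(\alpha,\beta)<1$ and Lipschitz in $w$, and the Banach fixed point then gives \eqref{eq:LipGamma}. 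Your case-analysis programme may be completable, but as written it contains no step where \eqref{eq:asnAB}, rather than $|\alpha|+|\beta|<1$, actually enters at the level of $p$-variation norms; you should either carry out that analysis in full or route the argument through the fixed-point characterization of $m^{\pm}$ as above.
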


We now prove this proposition. As in the case of the Skorokhod map, it suffices to reason component-wise, so w.l.o.g. we assume that $d=1$, letting $\alpha=\alpha_1$, $\beta = \beta_1$, and we recall that we assume
\begin{equation} \label{eq:asnAB1}
\alpha < 1, \; \beta < 1, \;\; \rho(\alpha,\beta):=  \frac{| \alpha \beta|}{(1-\alpha)(1- \beta)} < 1.
\end{equation}

Let us first note that $f$ solves \eqref{eq:FPf} for $w$ if and only if $f(0) = \frac{1}{1-\alpha-\beta} w(0)$, and $f - f(0)$ solves \eqref{eq:FPf} for $w-w(0)$ (it is easy to check that under \eqref{eq:asnAB1}, $\alpha+\beta<1$). Therefore it is enough to consider the case where $w \in  C_0([0,T],\RR) = \{ w \in C([0,T],\RR), \; w(0)=0 \}$.

It then follows from results of \cite{CPY98} that for any $w \in C_0$, there exists a unique $f := \tilde{\Gamma}_{\alpha,\beta}(w)$ s.t. \eqref{eq:FPf} holds. 

In fact, letting $m^+(w)(t) = \sup_{0 \leq s \leq t} f(s)$ (resp. $m^- = \inf...$), $m^+(w)$ is the unique solution to the fixed point equation
\begin{equation}
m(t) = \frac{1}{1-\alpha} \sup_{s \leq t}\left( w(s) - \frac{\beta}{1-\beta} \sup_{u \leq s} \left( - w(u) - \alpha m(u)\right) \right) =: \phi^+(w,m) (t).
\end{equation}
and similarly, $m^-(w)$ is the unique solution to
\begin{equation}
m(t) =  \frac{1}{1-\beta} \inf_{s \leq t}\left( w(s) + \frac{\alpha}{1-\alpha} \sup_{u \leq s} \left( w(u) +\beta m(u)\right) \right) =: \phi^-(w,m) (t).
\end{equation}
\cite{CPY98} showed that under \eqref{eq:asnAB1}, $\phi^{\pm}$ are a contraction w.r.t. $m$ for the supremum norm. 

As in the previous subsection, this immediately implies the H\"older estimate \eqref{eq:HolderGamma}.

We now show Lipschitz continuity in $p$-variation, which will be a straightforward consequence of the following property of the running supremum, which was obtained by Falkowski and Słominski \cite{FS15}.

\begin{proposition}
Given $w^1, w^2 \in   C_0([0,T],\RR)$, let $y^i(t):= \sup_{0 \leq s \leq t} w^i(s)$, $i=1,2$. Then it holds that, for any $p \geq 1$,
\[ \left\| y^1 - y^2\right\|_{p-var,[0,T]} \leq \left\| w^1 - w^2\right\|_{p-var,[0,T]} \]
\end{proposition}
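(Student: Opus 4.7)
The plan is to derive the $p$-variation estimate from a pointwise bound on increments of $g := y^1 - y^2$, followed by a summation over a partition. For the first step, I would fix $s < t$ and exploit the identity $y^i(t) - y^i(s) = \big(\sup_{r \in [s,t]} w^i(r) - y^i(s)\big)^+$ together with a case analysis on the signs of $I := y^1(t)-y^1(s) \ge 0$ and $J := y^2(t)-y^2(s) \ge 0$. In each case, letting $\sigma \in [s,t]$ be an extremiser of $w^{k}$ on $[s,t]$ (with $k = 1$ if $I \geq J$ and $k = 2$ otherwise) and $\tau \in [0,s]$ be the latest time where $w^{3-k}$ attains its running maximum on $[0,s]$, a direct computation using $y^{3-k}(s) = w^{3-k}(\tau)$, $y^k(s) \geq w^k(\tau)$, and $y^{3-k}(t) \geq w^{3-k}(\sigma)$ yields
\[ |g(t) - g(s)| \leq |u(\sigma) - u(\tau)|, \]
where $u := w^1 - w^2$.

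Given a partition $\pi = \{0 = t_0 < \ldots < t_n = T\}$, applying this on each subinterval produces points $\tau_i \in [0, t_i]$ and $\sigma_i \in [t_i, t_{i+1}]$. A key structural observation is that, under the ``last argmax'' choice of $\tau_i$, both families $(\tau_i)$ and $(\sigma_i)$ are non-decreasing in $i$: the last time at which a non-decreasing running maximum is attained can only move forward, and $\sigma_i \leq t_{i+1} \leq \sigma_{i+1}$. To pass from the pointwise bound to $\sum_i |g(t_{i+1}) - g(t_i)|^p \leq \|u\|^p_{p-var,[0,T]}$, I would enlarge $\pi$ by inserting all the $\tau_i$ and $\sigma_i$ (which only enlarges the left-hand side, so it suffices to prove the refined statement) and telescope each $u(\sigma_i) - u(\tau_i)$ along the enriched partition. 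Combining this with the monotonicity of $(\tau_i)$ and $(\sigma_i)$ and the sub-additivity of the $p$-variation seminorm, a careful accounting of how many enriched sub-intervals are visited by each telescoping sum then gives the claim.

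The main obstacle is precisely this last combinatorial step. Even with the monotonicity of $(\tau_i), (\sigma_i)$, the intervals $[\tau_i, \sigma_i]$ can overlap, so one cannot just sum $p$-variations over them and invoke sub-additivity directly; indeed a naive sum of $p$-th powers of increments over overlapping intervals can strictly exceed $\|u\|^p_{p-var,[0,T]}$. The resolution, following Falkowski and S{\l}omi\'nski \cite{FS15}, is that the overlaps can only occur within a very restricted pattern (namely $\tau_{i+1} \in (t_i, t_{i+1}]$ precisely when $y^{3-k(i+1)}$ strictly increases across $t_i$), and a pigeonhole-type rearrangement of the contributions to matching ordered intervals keeps the total bounded by $\|u\|^p_{p-var,[0,T]}$, as required.
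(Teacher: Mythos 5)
Your first step is fine: the pointwise bound $|g(t)-g(s)|\leq |u(\sigma)-u(\tau)|$, with $u=w^1-w^2$, $\sigma\in[s,t]$ an extremiser of the dominating path and $\tau\in[0,s]$ an argmax time of the other path, follows exactly as you indicate from $y^k(t)=w^k(\sigma)$, $y^{3-k}(t)\geq w^{3-k}(\sigma)$, $y^{3-k}(s)=w^{3-k}(\tau)$ and $y^k(s)\geq w^k(\tau)$. But from there the argument does not close. First, the claimed monotonicity of $(\tau_i)$ is not justified: $\tau_i$ is the last argmax time of $w^{3-k(i)}$ on $[0,t_i]$, and the index $k(i)$ (which path has the larger increment over $[t_i,t_{i+1}]$) can switch from one subinterval to the next; the ``last argmax moves forward'' argument only applies when the same path is used on consecutive intervals, so in general $\tau_{i+1}$ may lie well before $\tau_i$. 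Second, and decisively, the step you yourself identify as the main obstacle is never carried out: you acknowledge that the intervals $[\tau_i,\sigma_i]$ overlap and that a naive sum of $|u(\sigma_i)-u(\tau_i)|^p$ can exceed $\|u\|_{p\text{-var},[0,T]}^p$, and then resolve this by appealing to ``a pigeonhole-type rearrangement \ldots following Falkowski and S{\l}omi\'nski''. That rearrangement is precisely the non-trivial content of the proposition; asserting its existence is not a proof, and no refinement of the partition by inserting the points $\tau_i,\sigma_i$ by itself repairs the overcounting caused by overlapping intervals (consider $u$ rising steeply once, with every $[\tau_i,\sigma_i]$ straddling that rise).

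For what it is worth, the paper does not prove this proposition either: it is quoted directly from Falkowski and S{\l}omi\'nski \cite{FS15}, and the surrounding Lipschitz statements for $\tilde\Gamma_{\alpha,\beta}$ are deduced from it. So your attempt, which reproduces the easy increment estimate and then defers the combinatorial core to the same reference, is in substance the citation route with an incomplete sketch attached; to count as a self-contained proof you would need to actually exhibit the selection/reordering of increments of $u$ (or follow the inductive construction of partition points in \cite{FS15}) that bounds $\sum_i |u(\sigma_i)-u(\tau_i)|^p$ by $\|u\|_{p\text{-var},[0,T]}^p$.
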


We then have that \eqref{eq:LipGamma} holds for $\tilde{\Gamma}_{\alpha,\beta}$.

\begin{lemma}
It holds that, for all $w,m,w',w'$ in $ C_0([0,T],\RR)$,
\begin{equation}
\left\| \phi^+(w,m) - \phi^+(w,m') \right\|_{p-var;[0,t]} \leq \rho(\alpha,\beta) \left\| m-m' \right\|_{p-var;[0,t]}
\end{equation}
\begin{equation}
\left\| \phi^+(w,m) - \phi^+(w',m) \right\|_{p-var;[0,t]} \leq  C_{\alpha,\beta} \left\| w-w' \right\|_{p-var;[0,t]}
\end{equation}

As a consequence, assuming \eqref{eq:asnAB1}, we also have, for all $w,w'$ in $ C_0([0,T],\RR)$,
\begin{equation}
\left\| m^+(w) - m^+(w') \right\|_{p-var;[0,t]} \leq \tilde{C}  \left\| w-w' \right\|_{p-var;[0,t]}
\end{equation}
(with $\tilde{C} = \frac{C_{\alpha,\beta}}{1-\rho(\alpha,\beta)}$).

The same result also holds with $\phi^+$, $m^+$ replaced by $\phi^-$, $m^-$.
\end{lemma}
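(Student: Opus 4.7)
The plan is to exploit the fact that $\phi^+$ can be written as a composition of two applications of the running supremum operator $S:C_0([0,T],\RR) \to C_0([0,T],\RR)$, $(S\phi)(t) = \sup_{0 \leq s \leq t} \phi(s)$, with affine operations on paths. Namely,
\[ \phi^+(w,m) = \frac{1}{1-\alpha}\, S\Big( w - \tfrac{\beta}{1-\beta}\, S(-w - \alpha m) \Big), \]
and the two intermediate paths $-w-\alpha m$ and $w-\tfrac{\beta}{1-\beta}S(-w-\alpha m)$ both vanish at $0$ since $w,m\in C_0$ and $S(0)=0$, so that the Falkowski--Słominski estimate stated just before this lemma is applicable to each application of $S$. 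Combined with the trivial facts that the $p$-variation seminorm is translation-invariant and satisfies $\|c\phi\|_{p-var}=|c|\|\phi\|_{p-var}$, and with $1-\alpha$, $1-\beta$ positive under \eqref{eq:asnAB1}, this will give both contraction estimates by direct layer-by-layer bookkeeping.

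For the first estimate, I will fix $w$ and track the difference $\phi^+(w,m) - \phi^+(w,m')$ from the innermost level outwards: the difference of the innermost arguments is $-\alpha(m-m')$, with $p$-variation $|\alpha|\,\|m-m'\|_{p-var;[0,t]}$; the inner $S$ preserves this bound by 1-Lipschitz continuity; multiplication by $-\beta/(1-\beta)$ multiplies the bound by $|\beta|/(1-\beta)$; adding the common term $w$ does not change the difference; the outer $S$ preserves the bound; and finally the prefactor $1/(1-\alpha)$ contributes one more factor. Multiplying out gives exactly $\rho(\alpha,\beta)\|m-m'\|_{p-var;[0,t]}$. For the second estimate the same procedure with $m$ fixed applies: the innermost difference equals $-(w-w')$, whose $p$-variation is $\|w-w'\|_{p-var;[0,t]}$; after the inner $S$ and multiplication by $-\beta/(1-\beta)$ it is bounded by $\tfrac{|\beta|}{1-\beta}\|w-w'\|_{p-var;[0,t]}$; combining this with the outer $w-w'$ by the triangle inequality, then applying the outer $S$ and the prefactor $1/(1-\alpha)$, yields the constant $C_{\alpha,\beta} = \tfrac{1}{1-\alpha}\bigl(1 + \tfrac{|\beta|}{1-\beta}\bigr)$.

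The consequence about $m^+$ will follow from a standard fixed-point argument. Writing $m^+(w) = \phi^+(w,m^+(w))$ and similarly for $w'$, inserting the intermediate term $\phi^+(w,m^+(w'))$ and using the triangle inequality together with the two estimates just proven yields
\[ \|m^+(w) - m^+(w')\|_{p-var;[0,t]} \leq \rho(\alpha,\beta)\|m^+(w) - m^+(w')\|_{p-var;[0,t]} + C_{\alpha,\beta}\|w-w'\|_{p-var;[0,t]}. \]
Since $\rho(\alpha,\beta) < 1$ by \eqref{eq:asnAB1}, the first term on the right can be absorbed into the left-hand side, giving the claim with $\tilde{C} = C_{\alpha,\beta}/(1-\rho(\alpha,\beta))$. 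I do not anticipate any real obstacle: contractivity of $\phi^+$ in the supremum norm was already proven in \cite{CPY98}, and the only new ingredient is to upgrade to $p$-variation, which reduces directly to the Falkowski--Słominski proposition quoted above. The statement for $\phi^-$, $m^-$ follows by the same argument after swapping $\alpha \leftrightarrow \beta$ and replacing $S$ by the running infimum operator.
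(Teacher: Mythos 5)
Your proof is correct and is essentially the paper's argument: the paper's proof consists precisely of applying the Falkowski--S\l{}ominski $1$-Lipschitz bound for the running supremum to each of the two layers of $\phi^{\pm}$ and then absorbing the $\rho(\alpha,\beta)<1$ term in the fixed-point estimate, which is exactly your layer-by-layer bookkeeping. (Minor cosmetic point: in $\phi^-$ the inner operation remains a running supremum and only the outer one is an infimum, but since the running infimum is likewise $1$-Lipschitz in $p$-variation this changes nothing.)
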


\begin{proof}
Follows immediately by using twice the previous proposition.
\end{proof}

We also have Gaussian tails for $\tilde{\Gamma}(W^H)$.

\begin{lemma}
Assume \eqref{eq:asnAB1} and let $W^H$ be a fBm of Hurst index $H \in (0,1)$. Then for any $T>0$ it holds that $\|m^+(W^H)\|_{1-var;[0,T]}$,  $\|m^-(W^H)\|_{1-var;[0,T]}$ have Gaussian tails.
\end{lemma}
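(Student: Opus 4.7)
The plan is to combine the monotonicity of $m^\pm$ with the $p$-variation Lipschitz estimate from the previous lemma, reducing the claim to the standard Gaussian tail behavior of $\|W^H\|_{p-var;[0,T]}$ for $p>1/H$.

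The key first observation is that $m^+(w)$ is, by construction, non-decreasing in $t$, and satisfies $m^+(w)(0) = 0$ whenever $w(0) = 0$, in particular for $w = W^H$. Consequently, for any $p \geq 1$,
\[ \|m^+(w)\|_{p-var;[0,T]} = m^+(w)(T) - m^+(w)(0) = \|m^+(w)\|_{1-var;[0,T]}, \]
since for a monotone function all partial-sum increments appearing in the $p$-variation norm have the same sign, so the supremum is attained by the trivial partition $\{0,T\}$. The analogous identity holds for $-m^-$. Next I would apply the Lipschitz estimate from the previous lemma with $w' = 0$, noting that $\tilde{\Gamma}_{\alpha,\beta}(0) = 0$ and hence $m^\pm(0) = 0$. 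This yields, for any $p \geq 1$,
\[ \|m^\pm(W^H)\|_{1-var;[0,T]} = \|m^\pm(W^H)\|_{p-var;[0,T]} \leq \tilde{C}\, \|W^H\|_{p-var;[0,T]}. \]

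To conclude, I would choose $p > 1/H$, so that $W^H$ has $\PP^H$-a.s. finite $p$-variation. Since $w \mapsto \|w\|_{p-var;[0,T]}$ is a measurable seminorm on a Gaussian space --- it is $1$-Lipschitz in supremum norm and hence Lipschitz in the Cameron--Martin norm via the continuous embedding $\mathcal{H}^{H,T} \hookrightarrow C([0,T])$, which follows from Proposition \ref{prop:besovembedding} and Sobolev embedding --- the Fernique--Borell theorem (equivalently, an application of Proposition \ref{prop:GaussConc}) yields that $\|W^H\|_{p-var;[0,T]}$ has Gaussian tails. The claim for $\|m^\pm(W^H)\|_{1-var;[0,T]}$ then follows. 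The only mildly delicate point, and arguably the only step worth flagging, is that one cannot directly plug $p=1$ into the Lipschitz estimate from the previous lemma since $W^H$ has infinite $1$-variation; the monotonicity identity above is precisely what allows us to upgrade the $p$-variation bound to a $1$-variation bound for free.
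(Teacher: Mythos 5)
Your overall structure matches the paper's proof: you use the Lipschitz property of $m^\pm$ in $p$-variation, Gaussian concentration, and the monotonicity of $m^\pm$ to upgrade from $p$-variation to $1$-variation (the paper phrases the last step as "all $p$-variation norms of an increasing path are equivalent," which is exactly your observation that the trivial partition is optimal). The only real difference is that you first reduce to Gaussian tails of $\|W^H\|_{p-var;[0,T]}$ via $m^\pm(0)=0$, while the paper applies Proposition \ref{prop:GaussConc} directly to $F(W)=\|m^+(W)\|_{p-var;[0,T]}$ with $p=(H+1/2)^{-1}\vee 1$; these are equivalent reshufflings of the same argument.

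However, your justification of the Gaussian tails of $\|W^H\|_{p-var;[0,T]}$ contains a false claim: the $p$-variation seminorm is \emph{not} $1$-Lipschitz (nor even continuous) with respect to the supremum norm — a path oscillating $N$ times with amplitude $\varepsilon$ has sup norm $\varepsilon$ but $p$-variation of order $N^{1/p}\varepsilon$ — so the embedding $\mathcal{H}^{H,T}\hookrightarrow C([0,T])$ is not the relevant one and that step fails as written. The correct route, and the one the paper relies on, is the triangle inequality in $p$-variation together with the Cameron--Martin variation embedding \eqref{eq:jainmonrad}: for $p> 1/H \geq (H+1/2)^{-1}\vee 1$ one has
\begin{equation*}
\|W+h\|_{p-var;[0,T]} \;\leq\; \|W\|_{p-var;[0,T]} + \|h\|_{p-var;[0,T]} \;\lesssim\; \|W\|_{p-var;[0,T]} + \|h\|_{\mathcal{H}^{H,T}},
\end{equation*}
and since $\|W^H\|_{p-var;[0,T]}$ is a.s. finite for such $p$, Proposition \ref{prop:GaussConc} applies with $G(W)=\|W\|_{p-var;[0,T]}$ (alternatively, Fernique's theorem for a.s. finite measurable seminorms gives the same conclusion without any Lipschitz claim). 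With this one-line repair your argument is complete and equivalent to the paper's; note also that the paper's choice $p=(H+1/2)^{-1}\vee 1$ lets one apply \eqref{eq:jainmonrad} directly to $F(W)=\|m^+(W)\|_{p-var;[0,T]}$, which is why it never needs to discuss tails of $\|W^H\|_{p-var}$ separately.
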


\begin{proof}
By the previous lemma, $m^+$ is Lipschitz in $p$-variation, so that by Gaussian concentration (Proposition \ref{prop:GaussConc}) and Cameron-Martin variation embedding \eqref{eq:jainmonrad} , $\|m^+(W^H)\|_{p-var;[0,T]}$ has Gaussian tails for  $p = (H+1/2)^{-1} \vee 1$. But since $m^+$ is increasing, all its $p$-variation norms are equivalent, so that this also hold for $p=1$.
\end{proof}

%

\appendix
\section{Appendix}
\subsection{Stochastic sewing}

\begin{lemma}\label{lem:existence}
Let $A$ be a germ decomposing as Lemma \ref{lem:asutdecomp}. Then for every $x \in \RR^d$ there exists a path $\AAC(x): [0,T]\mapsto \RR^d$  such that:

$$\forall (x,y) \in \RR^d, (s,t) \in \Delta^2\ \absv{ \AAC_{st}(x) - \AAC_{st}(y) - A_{st}(x) + A_{st}(y) } \leq \absv{x-y}^{\xi} (t-s)^{\gamma} \omega(s,t)^{\eta \beta} + B^{x,y}_{st}$$

where $B^{x,y}_{st} : \Delta^2 \times \Omega \mapsto \RR$ is a random 2-parameter map with $\norm{ B^{x,y}_{st} }_{m}  \ls C_m \absv{x-y}^{\xi} (t-s)^{1/2+\epsilon}$, $\gamma + \eta \beta>1$ and $C_m \sim m^{1/2}$ for $m \to \infty$.
\end{lemma}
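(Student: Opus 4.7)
The plan is to construct $\AAC$ via a stochastic sewing argument with a random control, in the spirit of Theorem 4.7 of \cite{ABLM21}, applied to the two-parameter germ $\tilde A^{x,y}_{st} := A_{st}[f](x) - A_{st}[f](y)$. Lemma \ref{lem:asutdecomp} provides exactly the required decomposition $\delta_u \tilde A^{x,y}_{st} = I^{x,y}_{sut} + M^{x,y}_{sut}$: a deterministic remainder bounded by $|x-y|^\xi \|f\|_{\fsp} (t-s)^\gamma \omega_\phi(s,t)^{\eta\beta}$, with $\gamma + \eta\beta > 1$ by assumption \eqref{cond:sewing}, together with a conditionally centered stochastic integral $M^{x,y}_{sut}$ satisfying $\EE_s M^{x,y}_{sut} = 0$ and $\|M^{x,y}_{sut}\|_m \lesssim \sqrt m\, |x-y|^\xi \|f\|_{\fsp}(t-s)^{1/2+\epsilon}$, with $\epsilon > 0$ coming from \eqref{cond:mgale}.

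I would then introduce the dyadic Riemann sums $\AAC^{n,x,y}_{st} := \sum_{[u,v]\in\PC^n_{[s,t]}} \tilde A^{x,y}_{uv}$, and show that $(\AAC^{n,x,y}_{st})_n$ is Cauchy in $L^m(\Omega)$ for every $m \geq 2$. The difference between two consecutive levels equals $-\sum_i \delta_{u_i} \tilde A^{x,y}_{t_{i-1}, t_i}$, which I split according to the decomposition above. For the $I$ part, pointwise in $\omega$ the super-additivity of $\omega_\phi$ together with concavity of $x \mapsto x^{\eta\beta}$ (valid since $\eta\beta \leq 1$) yields
\[ \sum_i (t_i - t_{i-1})^\gamma \omega_\phi(t_{i-1},t_i)^{\eta\beta} \leq 2^{-n(\gamma + \eta\beta - 1)} (t-s)^\gamma \omega_\phi(s,t)^{\eta\beta}, \]
which is summable in $n$ since $\gamma + \eta\beta > 1$. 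For the $M$ part, conditional BDG applied to the sum of martingale differences $\sum_i M^{x,y}_{t_{i-1} u_i t_i}$ gives an $L^m$ bound $\lesssim \sqrt m\, 2^{-n\epsilon}|x-y|^\xi \|f\|_{\fsp} (t-s)^{1/2+\epsilon}$, also summable. Passing to the limit defines $\AAC^{x,y}_{st}$, and one sets $\AAC_{st}(x) := \AAC^{x,0}_{st}$. Summing the telescoped estimates from $n=0$ then produces the announced bound
\[ |\AAC^{x,y}_{st} - \tilde A^{x,y}_{st}| \leq C|x-y|^\xi(t-s)^\gamma \omega_\phi(s,t)^{\eta\beta} + B^{x,y}_{st}, \]
where $B^{x,y}_{st}$ is the $L^m$-limit of the telescoped martingale contributions, satisfying $\|B^{x,y}_{st}\|_m \lesssim \sqrt m\, |x-y|^\xi (t-s)^{1/2+\epsilon}$ uniformly in $x,y$.

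The main obstacle is that $\omega_\phi$ is a random control rather than a deterministic one, so the classical sewing lemma of \cite{Le20} does not directly apply; one must use the variant of \cite{ABLM21}. This is made possible by the adaptedness of $\omega_\phi$ built into the hypotheses of Theorem \ref{thm:continuity}: $\omega_\phi(s,t)$ is $\FF_t$-measurable, so all bounds involving $\omega_\phi$ are handled pathwise while the martingale structure in the $M$-term is preserved for the BDG step. A minor technical point is to verify that $C_m$ can be chosen independent of $x, y$, which is immediate from the uniform-in-$(x,y)$ nature of the bounds supplied by Lemma \ref{lem:asutdecomp}.
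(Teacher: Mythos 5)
Your overall strategy is the same as the paper's: dyadic Riemann sums of the difference germ $A_{st}[f](x)-A_{st}[f](y)$, with the consecutive-level defect split via Lemma \ref{lem:asutdecomp} into a part controlled by $(t-s)^\gamma\omega_\phi(s,t)^{\eta\beta}$ (summable in $n$ because $\gamma+\eta\beta>1$; your concavity argument and the paper's H\"older-inequality argument are the same estimate) and a martingale part with geometric decay, exactly in the spirit of Theorem 4.7 of \cite{ABLM21}. The construction and the bound on the $I$-part are fine.

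There is, however, a genuine gap in the martingale step, and it concerns precisely the constant $C_m\sim m^{1/2}$ that the lemma asserts. You propose to bound $\sum_i M^{x,y}_{t_{i-1}u_it_i}$ by ``conditional BDG applied to the sum of martingale differences'', using only the moment bounds $\| M^{x,y}_{sut}\|_m\lesssim \sqrt m\,|x-y|^\xi(t-s)^{1/2+\epsilon}$ recorded in your first paragraph. That route gives
\[
\Bigl\| \sum_i M_i \Bigr\|_m \;\lesssim\; \sqrt m\, \Bigl(\sum_i \|M_i\|_m^2\Bigr)^{1/2} \;\lesssim\; m\,|x-y|^\xi\,2^{-n\epsilon}(t-s)^{1/2+\epsilon},
\]
i.e.\ a factor $m$ (one $\sqrt m$ from BDG, another from the increments), not the claimed $\sqrt m$. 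This is not cosmetic: the $m^{1/2}$ growth of $C_m$ is exactly what later yields the Gaussian tails $\EE[\exp(\Lambda C_{\delta,R}^2)]<\infty$ in Theorem \ref{thm:continuity} via the chaining lemma; with $C_m\sim m$ one only obtains exponential integrability. The repair is the paper's argument: Lemma \ref{lem:asutdecomp} gives more than a moment bound, namely the representation $M_{t_i u_i t_{i+1}}=\int_{t_i}^{u_i} h_z\,d\nois_z$ with a \emph{deterministic pathwise} bound on the integrand $h$ (the bound on $\bar J_z$). Since the intervals $[t_i^n,u_i^n]$ are disjoint, the whole sum is a single stochastic integral $J^n_{st}=\int_s^t \indic{z\in\cup_i[t_i^n,u_i^n]}\,h_z\,d\nois_z$ with adapted, pathwise-bounded integrand, and one application of BDG with the optimal constant $\sim\sqrt m$, together with the deterministic bound on the quadratic variation, yields $\|J^n_{st}\|_m\lesssim \sqrt m\,|x-y|^\xi\,2^{-n\epsilon'}(t-s)^{1/2+\epsilon'}$, after which summing over $n$ gives $B^{x,y}_{st}=\sum_{n\ge0}|J^n_{st}|$ with the stated $C_m\sim m^{1/2}$. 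With this modification your proof coincides with the paper's.
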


\begin{proof}
Denote $A^{x,y}_{st} = A^x_{st} - A^y_{st}$ for conciseness. We continue with the decomposition of $\delta_u A_{st}^{x,y}$ obtained in Lemma \ref{lem:asutdecomp} and follow the footsteps of \cite{ABLM21}, Theorem 4.7. We will denote intervals of dyadic partition with:

\begin{equation}\label{eq:dyadpts}
t_0^k = s, \quad t_{2^k-1}^k = t, \quad t_i^k = t_0 + i 2^{-k} (t-s), \quad \bluepassage{u_i^k = \inv{2} \left( t_i^k + t_{i+1}^k \right)}.
\end{equation}

Define for every $(s,t) \in \Delta_2$:
$$A^{(x,y),k}_{st} = \sum_{i=0}^{2^k -1} A^{x,y}_{t_i^k t_{i+1}^k}, $$

and then:

$$A^{(x,y),k+1}_{st} - A^{(x,y),k}_{st} = \sum_{i=0}^{2^k-1} \delta A^{x,y}_{t^k_i,u^k_i,t^k_{i+1}} = I^k_{st} + J^k_{st}, $$

and denote $$I^{(x,y),k}_{st} = \sum_{i=0}^{2^k-1} \delta_{u^k_i} A^{x,y}_{t^k_i, t^k_{i+1}} \quad J^k_{st} = \sum_{i=0}^{2^k-1} M^{(x,y),k}_{t^k_i, t^k_{i+1}},$$

where $M^{(x,y),k}$ is defined precisely below in \eqref{eq:partitionmgale}. We will suppress the notation with $(x,y)$ if there is no risk of confusion. By the assumptions and Holder inequality we obtain:

\begin{equation}\label{eq:contr}
\absvv{I^k_{st}} \leq  C_1 \absv{x-y}^{\xi} 2^{-k(\gamma + \eta \beta - 1)} (t-s)^{\gamma}  \omega(s,t)^{\eta \beta}
\end{equation}

Note that in the above we disregard the fact that decomposition $\delta_u A_{st}$ yields $\omega(t^k_i,u^k_i)$ rather than $\omega(t^k_i,t^k_{i+1})$ - we simply replace the former by the latter and H\"older inequality still works. In order to see how to deal with $J^k_{st}$, let us first write down see the role which it plays in the decomposition.

\[ \absv{ A^{\infty}_{st} - A^0_{st} } \leq \sum_{k \geq 0} \absvv{A^{k+1}_{st} - A^k_{st} }  \leq \absv{x-y}^{\xi} (t-s)^{\gamma} \omega(s,t)^{\eta \beta} \sum_{k \geq 0} 2^{-k(\gamma + \eta \beta - 1)} + \sum_{k \geq 0 }  \absv{ J^k_{st} } \]

so we define $B^{x,y}_{st} = \sum_{ k \geq 0 } \absv{J^k_{st}(x) - J^k_{st}(y)} $

We aim at obtaining $\norm{ J^k_{st}}_{m} \ls \nofn \absv{x-y}^{\xi} C_m 2^{-k \epsilon} (t-s)^{1/2+\epsilon}$ \bluepassage{with $C_m$ as in the statement of the Lemma}, which will then allow us to have finite constant in the moment bound $B$ by the geometric series argument.

Similarly to the proof of Lemma \ref{lem:asutdecomp}, set again $\bar{f}_r(\cdot) = f_r(\cdot + x) - f_r(\cdot + y)$. Then

\begin{equation}\label{eq:partitionmgale}
M_{t_i^k, t_{i+1}^k} = \int_{t_i^k}^{u^k_i} \int_{u^k_i}^{t^k_{i+1}} (P_{zr} \nabla \bar{f}_r)(W^H_r + \phi_s) (z-r)^{H-1/2} dr  dW_z = \int_{t_i^k}^{u^k_i} h^{(x,y),k}_z dW_z
\end{equation}

which results in:

\begin{equation}\label{eq:jstk}
J_{st}^k = \sum_{i=0}^{2^k-1} \int_{t_i^k}^{u^k_i} h_z dW_z = \int_s^t \indic{ z \in [t_i^k, u^k_i]} h_z dW_z 
\end{equation}

then $z \mapsto \indic{ z \in [t_i^k, u^k_i]} h_z $ is adapted and progressively measurable, so BDG is applicable.

Denote  $g(z)_{ik} = \absvv{ \int_{u^k_i}^{t^k_{i+1}} (P_{zr} \nabla \bar{f}_r)(W^H_r + \phi_s) (z-r)^{H-1/2}dr }$ and, using Proposition \ref{prop:hoelderdiff} on $\bar{f}_r$ again in the last line we bound this term:

\begin{align*}
g(z)_{ik} & \leq \int_{u^k_i}^{t^k_{i+1}} \absvv{ (P_{zr} \nabla \bar{f}_r)(W^H_r + \phi_s) (z-r)^{H-1/2} } dr \\
\leq & \int_{u^k_i}^{t^k_{i+1}} (z-r)^{-H(\xi -\alpha) - 1/2} \norm{ \bar{f}_r }_{\ccc{\alpha-\xi}} dr \\
\leq & \absv{x-y}^{\xi} \nofn \big( (t^k_{i+1} - z)^{1/2 - H(\xi-\alpha) - \inv{p}} - (u^k_i - z)^{1/2-H(\xi-\alpha) - \inv{p}} \big)
\end{align*}

\redpassage{
then we observe that for $a > b > 0$ and $\beta, \theta \in (0,1)$ we can write:

$$a^{\beta} - b^{\beta} \leq a^{\beta} \land (a-b)^{\beta} \leq a^{\beta \theta} (a-b)^{\beta (1-\theta)} $$

which finally gives us:
\begin{equation}
g(z)_{ik} \ls \nofn (t^k_{i+1} - u^k_{i+1})^{\theta (1/2 - H(1+\epsilon - \alpha) )} (t^k_{i+1} - z)^{(1-\theta)(1/2 - H(1+\epsilon - \alpha) ) }
\end{equation}

denote $\zeta = 1/2 - H(1+\epsilon - \alpha)$ for brevity and pick $\theta \in (0,1)$ }

We then apply BDG inequality with the optimal constant \bluepassage{$C_p$, which satisfies $C_p \sim p^{p/2}$ as $p \to \infty$}:

\begin{equation}\label{eq:dyadicmgale}
\norm{ J_{st}^k }_{p}^p \ls C_p \left[ \sum_{i=0}^{2^k} \int_{t^k_i}^{u^k_i} (g(z)_{ik})^2 dz \right]^{p/2} 
\end{equation}

\bluepassage{and estimate the integral by taking $\sup$ of the integrand:

\[ \int_{t^k_i}^{u^k_i} \left( g(z)_{ik} \right)^2 dz \ls \absv{x-y}^{\xi} \nofn \absv{ t^k_i - u^k_i } \absv{ t^k_{i+1} - t^k_i }^{1+2H(\alpha-\xi) -\frac{2}{p}}  \]

which is bounded with $2^{-k(1+\epsilon)} \absv{ t-s}^{1+\eps}$ using the condition on $\alpha$ and the choice of the sequence $(t^k)_{k \geq 1},(u^k)_{k \geq 1}$. This way \eqref{eq:dyadicmgale} is bounded uniformly in $k$, which finishes the proof.
}

\redpassage{
\begin{equation}\label{eq:dyadicmgalecont}
\begin{aligned}
\sum_{i=0}^{2^k} \int_{t^k_i}^{u^k_i} g(z)_{ik}^2 dz & \ls \nofn^2 \sum_{i=0}^{2^k} (t^k_{i+1} - u^k_{i+1})^{2\theta \zeta} \int_{t^k_i}^{u^k_i} (t^k_{i+1} - z)^{2(1-\theta) \zeta  } dz \\ \ls & \sum_{i=0}^{2^k} \nofn^2 (t^k_{i+1} - u^k_i)^{2 \theta \zeta} (t^k_{i+1} - t_i^k)^{2(1-\theta)\zeta} (u^k_i - t^k_i) \ls \nofn 2^k 2^{-k(1+2\zeta)} (t-s)^{1+2\zeta}
\end{aligned}
\end{equation}

We end up with:

$$\norm{ J^k_{st} }_{p} \ls \absv{x-y}^{\xi} p^{1/2} \nofn (t-s)^{1/2+\zeta} 2^{-k  \zeta} $$

for some $\zeta > 0$, as required}
\end{proof}

\begin{lemma}\label{lem:existall}
$\AAC$ as above exists as well as a limit of arbitrary partition $\PC$.
\end{lemma}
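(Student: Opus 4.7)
The plan is to prove convergence along arbitrary partitions by a direct stochastic-sewing argument, leveraging the dyadic decomposition already constructed in Lemma~\ref{lem:existence}. Fix $(s,t) \in \Delta_2$ and an arbitrary partition $\mathcal{P} = \{s = t_0 < \cdots < t_N = t\}$. Since $\mathcal{A}$ is a path, it is additive, so that
\[\mathcal{A}_{st} - \sum_{i=0}^{N-1} A_{t_i t_{i+1}} = \sum_{i=0}^{N-1} \bigl(\mathcal{A}_{t_i t_{i+1}} - A_{t_i t_{i+1}}\bigr),\]
and it suffices to show that the right-hand side vanishes in $L^m$ as $|\mathcal{P}| \to 0$.

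For each subinterval $[t_i, t_{i+1}]$, applying the dyadic construction of Lemma~\ref{lem:existence} with $s,t$ replaced by $t_i, t_{i+1}$ yields the telescoping series $\mathcal{A}_{t_i t_{i+1}} - A_{t_i t_{i+1}} = \sum_{k \geq 0}(I^k_i + J^k_i)$, where the $I^k_i$ inherit the deterministic bound \eqref{eq:contr} and each $J^k_i$ is a finite sum of It\^o integrals starting at times $\geq t_i$, hence $\mathbb{E}_{t_i}$-centered. Taking conditional expectations yields the splitting $\mathcal{A}_{t_i t_{i+1}} - A_{t_i t_{i+1}} = R_i + D_i$ with $R_i := \mathbb{E}_{t_i}[\mathcal{A}_{t_i t_{i+1}} - A_{t_i t_{i+1}}]$, $\mathbb{E}_{t_i} D_i = 0$, $D_i$ being $\mathcal{F}_{t_{i+1}}$-measurable, and with the inherited moment estimates
\[\|R_i\|_m \lesssim (t_{i+1}-t_i)^\gamma \omega_\phi(t_i,t_{i+1})^{\eta\beta}, \qquad \|D_i\|_m \lesssim m^{1/2} (t_{i+1}-t_i)^{1/2+\epsilon}.\]

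The predictable part is controlled by a standard super-additivity estimate: since $\gamma + \eta\beta > 1$, pick $\epsilon_0 \in (0,\gamma)$ small enough that $(\gamma - \epsilon_0) + \eta\beta \geq 1$; then $(t-s)^{\gamma-\epsilon_0}\omega_\phi(s,t)^{\eta\beta}$ is super-additive by H\"older, so that
\[\Bigl\|\sum_i R_i\Bigr\|_m \leq \sum_i \|R_i\|_m \leq |\mathcal{P}|^{\epsilon_0}(t-s)^{\gamma-\epsilon_0}\omega_\phi(s,t)^{\eta\beta} \xrightarrow[|\mathcal{P}| \to 0]{} 0.\]
The centered part, since $(D_i)_i$ is a sequence of martingale differences with respect to $(\mathcal{F}_{t_i})_i$, is handled by the Burkholder-Davis-Gundy inequality combined with Minkowski's inequality in $L^{m/2}$:
\[\Bigl\|\sum_i D_i\Bigr\|_m^2 \lesssim \sum_i \|D_i\|_m^2 \lesssim m \sum_i (t_{i+1}-t_i)^{1+2\epsilon} \leq m\, |\mathcal{P}|^{2\epsilon}(t-s) \xrightarrow[|\mathcal{P}| \to 0]{} 0.\]
Combining the two contributions proves $L^m$-convergence to the dyadic limit $\mathcal{A}_{st}$ along any sequence of partitions with vanishing mesh; uniqueness of the limit is then automatic.

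The main subtlety is bookkeeping: one must check that the dyadic construction of Lemma~\ref{lem:existence} can be run verbatim on each subinterval $[t_i, t_{i+1}]$ with the appropriate filtration shift, and that the corresponding single-point version (without the spatial difference $x-y$) of the estimates is available. Both points are immediate, since the decomposition of $\delta_u A_{st}$ obtained in Lemma~\ref{lem:asutdecomp} relies only on the tower property and Clark-Ocone on the chosen interval, and hence is insensitive to the choice of the base point $s$.
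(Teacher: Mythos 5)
Your overall architecture is essentially the paper's: compare the Riemann sum over an arbitrary partition to the dyadic construction subinterval by subinterval, control one contribution through superadditivity of $(v-u)^{\gamma}\omega_{\phi}(u,v)^{\eta\beta}$ (using $\gamma+\eta\beta>1$), and control the other through BDG for martingale differences. (The paper instead compares two arbitrary partitions through their common refinement and the dyadic-allocation identity, so it never needs the additivity of $\AAC$ that you invoke; relying on Lemma \ref{lem:existence} stating that $\AAC$ is a path is acceptable, but note this is the one structural difference.)

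The genuine gap is in your recentering step and the "inherited" moment estimates. Lemma \ref{lem:existence} gives on $[t_i,t_{i+1}]$ a decomposition into a term bounded \emph{pathwise} by $C(t_{i+1}-t_i)^{\gamma}\omega_{\phi}(t_i,t_{i+1})^{\eta\beta}$, where $\omega_{\phi}$ is a random control with no integrability assumed in Theorem \ref{thm:continuity}, plus a sum of stochastic integrals with $L^m$ bounds. Your split $R_i=\EE_{t_i}[\AAC_{t_it_{i+1}}-A_{t_it_{i+1}}]$, $D_i=\AAC_{t_it_{i+1}}-A_{t_it_{i+1}}-R_i$ mixes the two: (i) the asserted bound $\norm{R_i}_m\lesssim(t_{i+1}-t_i)^{\gamma}\omega_{\phi}(t_i,t_{i+1})^{\eta\beta}$ equates a deterministic quantity with a random one, and there is no honest $L^m$ substitute — even if $\omega_{\phi}$ had all moments, moving the $L^m$ norm inside destroys the superadditivity argument, since the exponent $\eta\beta$ is attached to the random control itself and the product $(v-u)^{\gamma-\epsilon_0}\EE[\omega_{\phi}(u,v)^{m\eta\beta}]^{1/m}$ is not superadditive for large $m$; (ii) the bound $\norm{D_i}_m\lesssim m^{1/2}(t_{i+1}-t_i)^{1/2+\epsilon}$ is not inherited from Lemma \ref{lem:existence}, because $D_i$ contains the recentered control part $\tilde I_i-\EE_{t_i}\tilde I_i$, which is neither of order $(t_{i+1}-t_i)^{1/2+\epsilon}$ nor even known to lie in $L^m$. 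The repair is to not recenter: keep the decomposition of Lemma \ref{lem:existence} as it stands, bound $\sum_i \tilde I_i$ pathwise by $\bigl(\sup_i \omega_{\phi}(t_i,t_{i+1})^{\epsilon_0}\bigr)(t-s)^{\gamma}\omega_{\phi}(s,t)^{\eta\beta}$, which tends to $0$ almost surely as the mesh shrinks, and apply BDG only to $\sum_i \tilde J_i$, which is already a sum of It\^o integrals over disjoint subintervals and hence a martingale-difference array. One then concludes convergence in probability (a.s. for the control part, $L^m$ for the martingale part), which identifies the limit with $\AAC$; this is exactly how the paper's Cauchy-sequence argument proceeds.
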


\begin{proof}

The idea of the proof is quite standard and follows Lemma 2.15 in \cite{Le20}. 
~\paragraph{Dyadic allocation}
~\\
 To obtain convergence for arbitrary partition, we make use of dyadic decomposition as in Lemma 2.14 in \cite{Le20}. We have then, for any partition $\PC = \{ t_0 \leq t_1 \leq ... \leq t_N \}$:

\begin{equation}\label{eq:dyadicalloc}
\sum_{i=0}^N A_{t_i,t_{i+1}} - A_{t_0,t_N} = \sum_{n \geq 0} \sum_{i=0}^{2^n} R_i^n 
\end{equation}

with $R^n_i = - \delta_{s_2^{n,i}} A_{s^{n,i}_1,s^{n,i}_3} - \delta_{s_3^{n,i}} A_{s^{n,i}_1,s^{n,i}_r}$, where all $s_j^{n,i}$ are picked to lie inside the dyadic subinterval on level $n$, that is inside $[t^n_i, t^n_{i+1}]$. By decomposition in Lemma \ref{lem:asutdecomp} and estimates above in the dyadic existence Lemma \ref{lem:existence}. Since all $s_j^{n,i}$ are inside the dyadic subinterval, it means that $R^{n,i}$ is again a term decomposed into $I^n + J^n$, that is a $I^n$ which is a term controlled by $w(s,t)^{1+\delta}, \delta > 0$, $w$ control function and $J^k$ which is a martingale term. 

~\paragraph{Cauchy sequence}
We will sketch an argument why a $\AAC$ can be understood as a limit of Riemann sums in $L^p$. Denote $w(s,t)^{1+\eps} = (t-s)^{\gamma} w_{\phi}(s,t)^{\eta \beta}$

By taking two partitions $\PC, \PC'$ and considering their refinement, we can write for any difference of partitions $A^{\PC}_{st} - A^{\PC'}_{st} = \sum_k^{N''} Z_k$, where $Z_k$ is defined as follows:

\[ Z_k = \sum_{j; t_k < u_j < t_{k+1}} A_{u_j,u_{j+1}} - A_{t_k,t_{k+1}} \]

For any $Z_k$ we can apply \eqref{eq:dyadicalloc}, with decomposition of $R^n_i$ discussed above. Therefore $Z_k = I_k + J_k$, where $I_k$ is a term with a regular random control and $J_k$ is a martingale term. Therefore $\sum_k Z_k = \sum_k I_k + \sum_k J_k$. Repeating the same reasoning as in the proof of existence

$$\absv{ I_k } \leq \sum_{n \geq 0} \sum_{i=0}^{2^n} w(t^n_i,t^n_{i+1})^{1+\eps} \leq C w(t_k, t_{k+1})^{1+\eps}$$

Then adding them together we obtain:

$$\absv{ \sum_k I_k } \leq \sum_k (t_{k+1}-t_k)^{\gamma} w(t_k,t_{k+1})^{\eta \beta} \leq w_{\pi} w(s,t)^{1+\eps/2} \quad w_{\pi} = \sup_{[u,v] \in \PC} w(s,t)^{\eps/2}$$

For the martingale part, first we observe that $J_k$ is a martingale difference itself, that is in itself a sum of martingale differences indexed on dyadics. For each $J_k$ we get, using moment bounds on martingale difference in Lemma \ref{lem:asutdecomp} (again, performing the same computation as in the proof of existence):

$$\norm{ J_k }_m \leq (t_{k+1} - t_k)^{1/2+\eps}$$

by martingale differences of $J_k$ we get that \[ \norm{ \sum J_k }_m \leq \pi (t-s)^{1/2+\eps/2} \quad \pi := \sup_{[u,v] \in \PC} (v-u)^{\eps/2} \]

Clearly, for $\absv{\PC} \to 0$ we have $\pi + w_{\pi} \to 0$, so that:

\[ \norm{ A^{\PC}  - A^{\PC'} }_m \leq \pi + w_{\pi} \to 0 \]

so $A^{\PC}$ is Cauchy in $L^m, m \geq 2$.
\end{proof}
~\paragraph{Uniqueness}
\begin{lemma}\label{lem:sewuniq}
Let $R$ be a real valued process, for which it holds:

$$\forall\ (s,t) \in \Delta^2\;\;  R_{st} \leq I_{st} + M_{st}$$

such that $\absv{ I_{st} } \leq w(s,t)^{1+\eps}$ for $w: \Delta_2 \times \Omega \mapsto \RR_+$ is a continuous random control function  and $M_{st}$ satisfies $\EE_s M_{st}=0$, with moment bounds $\norm{ M_{st} }_2 \ls (t-s)^{1/2+\eps}$ for every pair $t>s$.

Then it holds that $R_{st} = 0$ a.s.

\end{lemma}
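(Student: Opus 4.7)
The strategy is to exploit the cocycle structure present in the intended application (Lemma~\ref{lem:ident}), where $R_{st} = \bar{\AAC}_{st} - \AAC_{st}$ is the increment of a continuous path, hence satisfies $R_{st} = R_{su} + R_{ut}$ for $s \leq u \leq t$, and the decomposition $R_{st} = I_{st} + M_{st}$ holds as an equality (so the stated one-sided inequality upgrades to $|R_{st}| \leq |I_{st}| + |M_{st}|$ by applying the same argument to $-R$). I will show that iterating this decomposition over a fine partition of $[s,t]$ forces $R_{st}$ to vanish in the limit; this is precisely the $L^2$ uniqueness step in Lê's stochastic sewing lemma \cite{Le20}.

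Fix $(s,t) \in \Delta_2$ and a sequence of partitions $\PC_n = \{s = t_0^n < \cdots < t_{N_n}^n = t\}$ with $|\PC_n| \to 0$. The cocycle identity gives
\[ R_{st} = \sum_{i=0}^{N_n - 1} R_{t_i^n, t_{i+1}^n} = \sum_i I_{t_i^n, t_{i+1}^n} + \sum_i M_{t_i^n, t_{i+1}^n}. \]
For the first sum, superadditivity and continuity of the random control $w$ yield
\[ \Big| \sum_i I_{t_i^n, t_{i+1}^n} \Big| \leq \sum_i w(t_i^n, t_{i+1}^n)^{1+\eps} \leq \Big( \max_i w(t_i^n, t_{i+1}^n) \Big)^{\eps} \, w(s,t) \longrightarrow 0 \text{ a.s.} \]
For the second sum, the hypothesis $\EE_{t_i^n} M_{t_i^n, t_{i+1}^n} = 0$ combined with $\FF_{t_{i+1}^n}$-measurability of $M_{t_i^n, t_{i+1}^n}$ (inherited from the sewing construction, in which $M$ is a stochastic integral over $[s,t]$) makes these terms pairwise orthogonal in $L^2$. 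Hence
\[ \EE \Big| \sum_i M_{t_i^n, t_{i+1}^n} \Big|^2 = \sum_i \EE |M_{t_i^n, t_{i+1}^n}|^2 \ls \sum_i (t_{i+1}^n - t_i^n)^{1 + 2\eps} \leq (t-s) \, |\PC_n|^{2\eps} \longrightarrow 0. \]

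Passing to a subsequence along which the martingale sum converges almost surely to $0$ and combining with the vanishing of the control sum forces $R_{st} = 0$ almost surely for each fixed $(s,t)$; continuity of $R$ promotes this to a joint statement on $\Delta_2$. The one step requiring care is verifying the implicit structural hypotheses in the application: namely the cocycle property and the $\FF_t$-adaptedness of $M_{st}$. Both are immediate in Lemma~\ref{lem:ident}, where $R = \bar{\AAC} - \AAC$ is a difference of continuous paths and $M$ is a Clark–Ocone stochastic integral; the remainder of the proof is the standard martingale-orthogonality plus control-superadditivity computation.
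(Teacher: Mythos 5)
Your proposal is correct and follows essentially the same route as the paper's own proof: decompose $R_{st}$ along a fine partition using additivity, kill the $I$-sum pathwise via superadditivity and continuity of the random control, and kill the $M$-sum in $L^2$ via martingale orthogonality, passing to an a.s.\ convergent subsequence. Your explicit remarks on the implicit structural hypotheses (additivity of $R$, adaptedness of $M$, and upgrading the one-sided inequality) are points the paper also relies on but leaves tacit, so nothing essential differs.
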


\begin{proof}
Let $\PC$ be an arbitrary partition of $[s,t]$ with $N$ being a number of subintervals,  and let us write, due to the fact that $t \mapsto R_t$ is additive:

$$R_{st} = \sum_{i=0}^{N} R_{t_i,t_{i+1}} \leq \sum_{i=0}^{N} I_{t_i,t_{i+1}} + M_{t_i,t_{i+1}} $$

Since $I_{st}$ is bounded by a random control function, which does not necessarily has to keep this property under taking expectation, we first apply summation. We first want to show convergence to zero of the first term and only then apply expectation to control the second term. This is a bit different from a usual scheme of proof from \cite{Le20}, where both terms are bounded using their respective moment bounds. 

Then we have:

\begin{equation}\label{eq:diff}
\absv{ R_{st} } \leq \absv{ \sum_{i=0}^{N} I_{t_i,t_{i+1}} } + \absv{ \sum_{i=0}^{N} M_{t_i,t_{i+1}} } = A^N + B^N
\end{equation}

For the first term we apply triangle inequality and obtain, using superadditivity:

$$\absv{ \sum_{i=0}^{N} I_{t_i,t_{i+1}} } \leq \sum_{i=0}^{N} \absv{ I_{t_i,t_{i+1}} } \leq \sum_{i=0}^{N} w(t_i,t_{i+1})^{1+\eps} \leq w(s,t) \sup_{[t_i,t_{i+1}] \in \PC} w(t_i,t_{i+1})  $$

Since size of the partition converges to zero we have $\sup_{[t_i,t_{i+1}] \in \PC} w(t_i,t_{i+1}) \to 0$. 

To consider the second term, we write
%
%

$$\EE \absv{ B^n }^2 \leq \EE \absv{ \sum_{i=0}^{N} M_{t_i,t_{i+1}} }^2 \leq  \sum_{i=0}^{N} \EE \absv{ M_{t_i,t_{i+1}} }^2$$

 This results in $\EE \absv{ M_{t_i,t_{i+1}} }^2 \ls (t_{i+1} - t_i)^{1 + 2 \eps}$.  in the end we obtain:

$$\EE \absv{ B^n }^2 \leq \sum_{[t_i,t_{i+1}] \in \PC} (t_{i+1} - t_i)^{1+2\eps} \leq (t-s) \sup_{[t_i,t_{i+1}] \in \PC} (t_{i+1}-t_i)^{2\eps} $$

and ultimately for both $A^n$ and $B^n$ we have convergence to zero as $n \to \infty$. It is enough to conclude that for all $(s,t) \in \Delta^2$ we have $R_{st} = 0$ a.s.
\end{proof}

\subsection{Chaining}\label{subsect:chain}

The following lemma will be very useful in the chaining arguments below.

\begin{lemma}\label{lem:aacpathwise}
Let $\AAC$ be a $\RR$-valued path such that:

\begin{equation}\label{eq:aacbound}
\absv{ \AAC_{st} } \leq  w(s,t) + B_{st} 
\end{equation}

with $w$ being a control function and $B_{st}$ a 2-parameter $\RR$-valued function.

Then there exist a map $Y$ such that 
\begin{equation}\label{eq:triangle2}
\forall\ s<u<t, \;\; \absv{ Y_{st} } \leq \absv{ Y_{su} } + \absv{ Y_{ut} } , \quad \absv{ Y_{st} } \leq \absv{ B_{st} }, 
\end{equation} and

\begin{equation}
\forall\ s<t, \;\; \absv{ \AAC_{st} } \leq w(s,t)+ Y_{st} .
\end{equation}

\end{lemma}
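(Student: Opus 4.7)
The natural candidate is to take $Y$ to be the smallest subadditive (in the sense of \eqref{eq:triangle2}) majorant of $|B|$, namely
\[
Y_{st} \;:=\; \inf_{\pi \in \mathcal{P}([s,t])} \; \sum_{[u,v] \in \pi} |B_{uv}|,
\]
where $\mathcal{P}([s,t])$ denotes the set of finite partitions of $[s,t]$. This is non-negative by construction, so $|Y_{st}| = Y_{st}$.

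First I would verify the two properties in \eqref{eq:triangle2}. The bound $Y_{st} \leq |B_{st}|$ is immediate by taking the trivial partition $\pi = \{[s,t]\}$. For subadditivity, given $s<u<t$ and $\varepsilon>0$, pick partitions $\pi_1$ of $[s,u]$ and $\pi_2$ of $[u,t]$ with $\sum_{\pi_j}|B| \leq Y_{su\text{ or }ut} + \varepsilon/2$; concatenating gives a partition of $[s,t]$ and yields $Y_{st} \leq Y_{su}+Y_{ut}+\varepsilon$, hence $Y_{st} \leq Y_{su}+Y_{ut}$ after letting $\varepsilon \to 0$.

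Next I would check the main bound $|\AAC_{st}| \leq w(s,t) + Y_{st}$. Fix an arbitrary partition $\pi = \{s=t_0<t_1<\cdots<t_n=t\}$ of $[s,t]$. Since $\AAC$ is a path (hence its increments are additive), one has $\AAC_{st} = \sum_i \AAC_{t_i t_{i+1}}$, so by the triangle inequality, the hypothesis $|\AAC_{t_i t_{i+1}}| \leq w(t_i,t_{i+1}) + B_{t_i t_{i+1}}$, the superadditivity of the control $w$, and $B_{uv} \leq |B_{uv}|$,
\[
|\AAC_{st}| \;\leq\; \sum_i |\AAC_{t_i t_{i+1}}| \;\leq\; \sum_i \bigl( w(t_i,t_{i+1}) + |B_{t_i t_{i+1}}|\bigr) \;\leq\; w(s,t) + \sum_i |B_{t_i t_{i+1}}|.
\]
Taking the infimum over $\pi$ gives the claimed inequality.

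There is no real obstacle here; the only subtle point is to use that $\AAC$ is a genuine (additive) path rather than just a germ, so that the telescoping argument applies and the triangle inequality can be invoked over arbitrarily fine partitions. The definition of $Y$ as an infimum over partitions then automatically produces the required subadditive structure without any further work.
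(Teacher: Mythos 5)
Your proof is correct, but it takes a different route from the paper. The paper simply sets $Y_{st} := \bigl( \absv{\AAC_{st}} - w(s,t) \bigr)_+$, gets $\absv{Y_{st}} \leq \absv{B_{st}}$ and $\absv{\AAC_{st}} \leq w(s,t) + Y_{st}$ for free, and then checks the triangle-type inequality \eqref{eq:triangle2} by a short case analysis, using that $\AAC$ is a genuine path (so $\AAC_{st} = \AAC_{su} + \AAC_{ut}$) together with the superadditivity of $w$. You instead take $Y_{st} := \inf_{\pi} \sum_{[u,v]\in\pi} \absv{B_{uv}}$; your verification of \eqref{eq:triangle2} by concatenation of partitions and of the main bound by telescoping $\AAC$ over a partition, using superadditivity of $w$ and taking the infimum, is sound (one terminological quibble: your $Y$ is the \emph{largest} subadditive function dominated by $\absv{B}$, not a ``majorant'' of $\absv{B}$). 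What each approach buys: your construction is canonical and avoids any case analysis, and it produces the best possible $Y$ below $\absv{B}$; the paper's explicit pointwise formula is shorter and, more importantly for the way the lemma is used in Lemma \ref{lem:aacsewn} and Theorem \ref{thm:continuity}, it is manifestly measurable in $\omega$ (being a pointwise function of $\AAC_{st}$ and $w(s,t)$), whereas your infimum runs over uncountably many partitions, so to take the $L^m$ moments of $U^{x,y}_{st}=Y_{st}$ required downstream you would need an extra remark (e.g. continuity of $B$ in $(s,t)$, allowing restriction to partitions with rational endpoints) to ensure $Y$ is a random variable. This is not a gap in the proof of the deterministic statement as written, only a point to address when the lemma is applied in the stochastic setting.
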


\begin{proof}

Let us define $Y_{st} = ( \absv{ \AAC_{st} } - w(s,t) )_+$. Clearly 
\begin{equation}\label{eq:yupb}
\absv{ Y_{st} } \leq \absv{ w(s,t) +  B_{st} -  w(s,t) } \leq B_{st} ,
\end{equation}
so that $Y$ enjoys the same moment bounds as $B$. Also:

\begin{equation}\label{eq:newaacbound}
\absv{ \AAC_{st} } = \absv{ \AAC_{st} } -  w(s,t) +  w(s,t) \leq Y_{st} +  w(s,t) .
\end{equation}

Now we have to show that $Y$ satisfies \eqref{eq:triangle2}. 

In the first case, assume that for every of the three pairs involved there is $\absv{ \AAC } > w$. Then by the superadditivity of $w$ and the fact that $\AAC$ is a path, which makes triangle inequality automatically applicable, we obtain:

$$Y_{st} = \absv{ \AAC_{st} } - w(s,t) \leq \absv{ \AAC_{su} + \AAC_{ut} } - w(s,u) - w(u,t) \leq \absv{ \AAC_{su} } + \absv{ \AAC_{ut} } - w(s,u) - w(u,t) = Y_{su} + Y_{ut} $$

We can write the same inequalities in other cases, i.e. when for either of $(s,u), (u,t)$ we have $\absv{ \delta \AAC } < w$. Then the desired inequality is clearly satisfied as $\delta Y = 0$. 

\end{proof}

\begin{lemma}\label{lem:chain}
Let $U^{x,y}_{st}$ be a \bluepassage{random map,  a.s. continuous in all its arguments and } satisfying the triangle inequality for each $(x,y)$ and every $s <v < t$ : $\absv{ U^{x,y}_{st} } \leq \absv{ U^{x,y}_{sv} } + \absv{ U^{x,y}_{vt} }$. Moreover, assume that for any pair $(s,t)$ there exist a function $x \mapsto J^x_{st}$ such that $\absv{ U^{x,y}_{st} } \leq \absv{ J^x_{st} - J^y_{st} }$, and that we have the following moments bounds for $J^x$:

$$\norm{ J^x_{st} - J^y_{st} }_m \leq D \sqrt{m} (x-y)^{\delta} (t-s)^{\alpha} $$

It then follows that almost surely, for any $\bar{\delta} < \delta, \bar{\alpha} < \alpha, R< \infty$:

\begin{equation}\label{eq:jointhoeld}
L = \sup_{ |x|,|y| \leq R,x \neq y, t \neq s} \frac{ U^{x,y}_{st} }{(x-y)^{\bar{\delta}} (t-s)^{\bar{\alpha}}} < \infty 
\end{equation}

and $L$ has Gaussian tails for $\lambda > 0$ small enough.
\begin{equation}\label{eq:hoelderchaingauss}
\EE \exp\left( \lambda \left( \frac{ L }{D} \right)^2 \right) < \infty
\end{equation}
\end{lemma}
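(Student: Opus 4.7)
The proof is by a classical Kolmogorov--Chentsov chaining argument, performed in two stages---first in the spatial variable, then in the temporal one---carefully tracking the $\sqrt{m}$ subgaussian dependence throughout so as to recover Gaussian tails at the end. By rescaling I assume $D = T = R = 1$; the final bound then rescales in the natural way.

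\textbf{Step 1: subgaussian increments.} The hypothesis $\norm{J^x_{st} - J^y_{st}}_m \leq \sqrt{m}\,|x-y|^{\delta}\,|t-s|^{\alpha}$ valid for all $m \geq 2$ is equivalent, by Chebyshev combined with Stirling, to the subgaussian tail estimate
\[\PP\bigl(|J^x_{st} - J^y_{st}| > \lambda\bigr) \leq 2\exp\!\bigl(-c\lambda^{2}/[|x-y|^{2\delta}|t-s|^{2\alpha}]\bigr)\]
for some universal $c > 0$. Via the domination $|U^{x,y}_{st}| \leq |J^x_{st} - J^y_{st}|$, the same tail bound holds for $U^{x,y}_{st}$, which moreover vanishes on the diagonals $x = y$ and $s = t$.

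\textbf{Step 2: spatial chaining.} For each fixed $(s,t)$, apply standard Kolmogorov chaining (dyadic approximation of $[-R,R]^d$) to the random field $x \mapsto J^x_{st}$, whose increments are subgaussian on the scale $|x-y|^\delta|t-s|^\alpha$. This yields a $\bar\delta$-H\"older modulus in space,
\[H(s,t) \;:=\; \sup_{\substack{x \neq y \\ |x|,|y| \leq R}} \frac{|U^{x,y}_{st}|}{|x-y|^{\bar\delta}},\]
with moment bound $\norm{H(s,t)}_m \lesssim \sqrt{m}\,|t-s|^{\alpha}$ for every $m \geq 2$. Crucially, the assumed triangle inequality in time passes to the supremum over $x,y$, so $H$ itself is subadditive in time: $H(s,t) \leq H(s,u) + H(u,t)$ for $s<u<t$.

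\textbf{Step 3: temporal chaining with subadditivity.} A second Kolmogorov-type chaining is then applied to the two-parameter function $(s,t) \mapsto H(s,t)$. Setting $\eps := \alpha - \bar\alpha > 0$ and using the subgaussian tail inherited from Step~2, a union bound on the dyadic grid $t^n_k = k\,2^{-n}$ gives
\[\PP\Bigl(\max_{0 \leq k < 2^n} H(t^n_k, t^n_{k+1}) > u\cdot 2^{-n\bar\alpha}\Bigr) \leq 2^{n+1}\exp(-c u^{2} 2^{2n\eps}),\]
which is summable in $n$ with total $\lesssim \exp(-c u^2)$ for $u$ large. On the complementary event $A_u$, every interval $[s,t] \subset [0,1]$ admits the standard dyadic decomposition into at most two subintervals per depth $\geq -\log_2|t-s|$; combined with subadditivity of $H$ and the convergent geometric series $\sum_{j \geq 0} 2^{-j\bar\alpha}$, this yields $H(s,t) \leq C u\, |t-s|^{\bar\alpha}$ for all $s<t$. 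Hence $L \leq C u$ on $A_u$, giving $\PP(L > Cu) \leq \exp(-c u^2)$, i.e., Gaussian tails on $L/D$ after undoing the normalisation.

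The main technical obstacle is Step~3: standard Kolmogorov--Chentsov is phrased for one-parameter processes, whereas $H$ is a subadditive two-parameter object on the simplex $\{s<t\}$. The key is to exploit subadditivity to reduce the supremum over the full simplex to maxima on dyadic grids, while carefully preserving the $\sqrt{m}$-dependence of the moment estimates through both stages of chaining---this is precisely what upgrades the concentration of $L$ from merely exponential to \emph{Gaussian}. Measurability of the suprema is handled by first working on countable dense (dyadic) sets and then using the a.s.\ continuity of $U$ in all its arguments to extend.
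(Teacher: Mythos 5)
Your proposal is correct, and it proves the lemma by a genuinely different (though related) packaging of the chaining argument than the paper. The paper, following Lemma 3.1 of \cite{CG16}, runs the \emph{temporal} chaining first for fixed $(z,w)$ by introducing the Garsia--Rodemich--Rumsey-type exponential functional $R_{\mu^{z,w}}$ over dyadic intervals, exploiting the triangle inequality in time to get $|U^{z,w}_{st}| \leq (t-s)^{\bar\alpha}(\ln R_{\mu^{z,w}}/\mu^{z,w})^{1/2}$, then chooses $\mu^{z,w} \sim |z-w|^{-2\bar\delta}D^{-2}$ so that $\EE R_{\mu}$ is bounded (Taylor expansion plus Stirling, which is where the $\sqrt m$ growth enters), handles the spatial supremum through Kolmogorov's continuity theorem applied to $x\mapsto J^x_{st}$, and finally extracts $\EE L^p \lesssim p^{p/2}D^p$ and hence Gaussian tails by another series expansion. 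You instead chain in \emph{space} first, producing the process $H(s,t)$ which inherits both the subgaussian moments (from $J$, via domination) and the subadditivity in time (from $U$), and then do the temporal chaining at the level of tails: a union bound over dyadic grids defines a good event of probability $1-O(e^{-cu^2})$ on which subadditivity and the geometric series give $L\leq Cu$, so the Gaussian tail is read off directly without ever manipulating moment generating functions. Both proofs rest on exactly the same two ingredients — the triangle inequality in time and the $\sqrt m$ moment growth — but your tail-level bookkeeping avoids the $R_\mu$ functional and is arguably more transparent, whereas the paper's stays closer to the \cite{CG16} machinery it reuses. Two points you implicitly rely on and should make explicit if written out in full: the spatial Kolmogorov step must keep its constant uniform in $m$ (true for $m>d/(\delta-\bar\delta)$, with small $m$ recovered by monotonicity of $L^m$ norms) so that the $\sqrt m$ scaling survives, and the passage from dyadic to arbitrary $(s,t)$ (your countable dyadic decomposition of $[s,t]$) uses the assumed a.s.\ continuity of $U$, exactly as you note at the end.
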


\begin{proof}
Analogously to \cite{CG16}, Lemma 3.1, define:

\begin{equation}\label{eq:rmu}
R_{\mu^{z,w} c} = \sum_{m \geq 0} \sum_{k=0}^{2^k} 2^{-2m} \exp\left( \mu^{z,w} c 2^{2 m \bar{\alpha}} \absv{ U^{z,w}_{k 2^{-m},(k+1) 2^{-m} }}^2 \right)
\end{equation}

Fixing $z,w$ it can be concluded directly from there (or Lemma 18 in \cite{GG20}) that for every $(s,t)$ holds:

$$ \absv{ U^{z,w}_{st}} \leq (t-s)^{\bar{\alpha}} \left( \frac{ \ln R_{\mu^{z,w}} }{\mu^{z,w}} \right)^{1/2} $$

By using Taylor expansion, using moment bounds on $U^{z,w}_{k 2^{-m},(k+1) 2^{-m}}$ and recalling Stirling inequality one can easily see that for 
\begin{equation}\label{eq:mupick}
\mu_0^{z,w} = \inv{4} (z-w)^{-2\bar{\delta}} c^{-1} D^{-2}
\end{equation} we have $\EE R_{\mu^{z,w} c} < C$, with $C$ some constant that is independent from $z,w,c,\delta,D$.

\begin{equation}\label{eq:asbd}
\begin{aligned}
\absv{ U^{x,y}_{st} }^p \leq & (t-s)^{\bar{\alpha} p} \left( \frac{ \ln R_{\mu^{x,y}} }{\mu^{x,y}} \right)^{p/2} \\
\leq & D^p (x-y)^{\bar{\delta} p} (t-s)^{\bar{\alpha} p} \left( \sup_{z \neq w} \frac{ \ln R_{\mu^{z,w}} }{ (z-w)^{2 \delta } \mu^{z,w}} \right)^{p/2} \\
\end{aligned}
\end{equation}

Using definition of $R_{\mu}$ we see that \eqref{eq:asbd} can be bounded by:

\begin{equation}\label{eq:uxtsyp}
\absv{ U^{x,y}_{st} }^p \leq D (x-y)^{\bar{\delta} p} (t-s)^{\bar{\alpha} p} \left( \sum_{m \geq 0} \sum_{k=0}^{2^m-1} 2^{-2m} \exp\left( c_0 D^{-2} \sup_{z\neq w} \frac{ \absv{ U^{z,w}_{k2^{-m},(k+1)2^{-m}} }^2 }{(z-w)^{2\bar{\delta}}} \right)  \right)^{p/2} 
\end{equation}

Before we proceed, let us make a remark how proportionality constant grows with $p$. In particular, by Jensen inequality it can be seen that $R_{\mu}^p \leq R_{\mu p}$ for $p \geq 1$. Therefore a constant $c$ in \eqref{eq:rmu} is now proportional to $p$. By \eqref{eq:asbd} we see that $L^p$ as in \eqref{eq:jointhoeld} is bounded by $\mu^{-1/2}$, so that we have $L^p \leq p^{p/2} C(\omega)$. 

We then use the fact that for any fixed $(s,t)$ we have ${U^{z,w}_{st} } \leq \absv{ J^x_{st} - J^y_{st}}$ by assumption. Then for any fixed $(s,t)$ a map $x \mapsto J^x_{st}$ is a function, therefore by using Kolmogorov continuity theorem (see, for instance \cite{Tal14}, Appendix A.2 for a version of this theorem for functions with arguments in $\RR^d$) we conclude that for every pair $(k,m)$ we have $\sup_{z \neq w} \frac{ \absv{ J^x_{k2^{-m},(k+1)2^{-m}} - J^y_{k2^{-m},(k+1)2^{-m}} }^2 }{\absv{z-w}^{2\bar{\delta}}} < \infty$ almost surely. What remains is to show that $L(\omega)$ is almost surely finite and to that end we will show $\EE L < \infty$. By following the proof of chaining lemma we see that we need to take a high enough moment of H\"older constant in order to obtain correct continuity exponent. Moreover, denote $C_{J,s,t}(\omega) = \sup_{x \neq y} \frac{ \absv{ J_{st}^x - J_{st}^y} }{(x-y)^{ \bar{\delta}}}$ and remark that $\EE C_{J,s,t}^q \ls (t-s)^{\alpha q} q^{q/2} D$, that is moments of H\"older constants are bounded by the same constant as the one in moment bounds, where $q$ is aforementioned high moment. 

Then using Taylor expansion on exponential inside $R_{\mu p}$, a moment bound on $C_{J,k2^{-m},(k+1)2^{-m}}$ we see that $\EE L^p < p^{p/2} D^p$. Therefore it is almost surely finite. Again by expanding with Taylor we see that $\EE \exp\left( \lambda L^2 \right) < \infty$ and $\lambda < D^{-2}$, hence the conclusion.

\end{proof}


\subsection{Global estimate on linear Young inequality in variation norm}\label{subsec:variationyoung}

We give here the proof of Lemma \ref{lem:stabest}. \bluepassage{ Recall that we assume that, for each $t\geq 0$,

\[ \qvart{ Y} \leq  \qvart{ \int_0^{\cdot} A(dr, Y_r) } + K \leq C_q \qvartCx{A} \left( |Y_0| + \qvart{Y} \right)+ K \]


Clearly, for all $t < t_0$ such that $\norm{A}_{q;[0,t]} < 1/(2C_q)$ we have $\qvart{Y} < C\left(  \absv{ Y_0} + K \right)$ for some $C \geq 2$.  Let us subdivide $[0,T]$ in intervals $[t_i,t_{i+1}]$ such that $\dqvar{A}{i} := \norm{ \delta A }_{q;[t_i,t_{i+1}]} < 1/(2C_q)$ - that is, the seminorm part of the variation norm. We will show by induction that for constant $C_R > 1$ coming from \eqref{eq:indefnyoung}
\begin{equation}\label{eq:induction}
\norm{ Y }_{q;[0,t_N]} \leq 2 C_R g(N) \left(  \absv{ Y_0} + K \right)
\end{equation} where $g: \NN \mapsto \NN$ is an increasing function, which we will identify. It holds that

\begin{align*}
\norm{ Y}_{q;[0,t_N]}  \leq &  \norm{ \int_0^{\cdot} A(dr, Y_r) }_{q;[0,t_{N-1}]} + \norm{ \int_0^{\cdot} A(dr, Y_r) }_{q;[t_{N-1},t_N]} + K \\
\leq & C_q C_R \left( \absv{ Y_0} + \norm{ Y}_{q;[0,t_{N-1}]} \right) \norm{ A }_{q;[0,t_{N-1}]} + C_q C_R \left( \absv{Y_{t_{N-1}} } + \norm{ Y}_{q;[t_{N-1},t_N]} \right) \norm{ A }_{q;[t_{N-1},t_N]} + K\\
\end{align*}

Using $\norm{ Y}_{q;[t_{N-1},t_N]} < \norm{ Y}_{q;[0,t_N]}$, $ \norm{ A }_{q;[t_{N-1},t_N]} < 1/(2C_q)$ and \eqref{eq:induction}

\begin{align*}
\norm{ Y}_{q;[0,t_N]} < & 2 C_q C_R \left( \absv{ Y_0} + \norm{ Y}_{q;[0,t_{N-1}]} \right) \norm{ A }_{q;[0,t_{N-1}]} + 2K \\
< & 2 C_q C_R \left( \absv{Y_0} + K \right) \left( (N-1) + 2C g(N-1) (N-1) + 1 \right)
\end{align*}

We now have to identify $g$ such that $C_q \left( N + 2C g(N-1) (N-1) \right) < g(N)$. Take $g(N) = C_g e^{C_u N^{1+\delta}}$ for some $C_g, C_u > 1$ to be identified and arbitrarily small positive $\delta$. Set $\bar{C}$ a constant depending on $C_q, C$. We then need:

\[ C_q N e^{-C_u N^{1+\delta}} + 2 \bar{C} N < C_g e^{C_u \left( N^{1+\delta} - (N-1)^{1+\delta} \right) } \]

By Taylor we have $N^{1+\delta} - (N-1)^{1+\delta} > (N-1)^{\delta}$. We pick $C_u$ such that:

\[ \sup_{n \geq 1} C_q n e^{-C_u n^{1+\delta}} \leq C_g \quad  \inf_{n \geq 1}\left( e^{C_u n^{\delta}} - 1 - 2C n \right) > 0 \]

Considering $C_u \to \infty$ we see that such a choice always exists. To conclude we need to estimate $N$. We can estimate sum of splits by observing that by H\"older inequality and superadditivity of $\dqvar{A}{j}^q$ we have :

\[ \sum_{i=0}^M \dqvar{A}{i} \leq \left( \sum_{i=0}^N 1 \right)^{1/q^*} \left( \sum_{i=0}^N \dqvar{A}{i}^q \right)^{1/q} \leq N^{1-1/q} \qvart{A} \]

combined with (this is where continuity of $t \mapsto \qvart{A}$ is important) $N \inv{2C_q} =  \sum_{i=0}^M \dqvar{A}{i}  \leq N^{1-1/q} \qvart{A}$ we have: 
\begin{equation}\label{eq:sumtotqvar}
N \leq (2C_q)^q \qvart{A}^q
\end{equation}

The conclusion follows by considering $g(N)$ in \eqref{eq:induction}

%
}

\redpassage{

\begin{proof}[Proof of Lemma \ref{lem:stabest}]

By \eqref{eq:indeflyoung} we have that:

\[ \qvarst{ \int_0^{\cdot} A_{dr} Y_r } \leq 2 C_R \left( \qvarst{ A } \qvarst{ Y} + \absv{ Y_s } \qvarst{ A} \right) \]

We insert it in the assumption on $\qvarst{Y}$,  and then it follows:

\[ \qvarst{ Y } \leq C \left( \absv{ Y_s } +  2 C_R \left( \qvarst{ A} \qvarst{Y} + \absv{ Y_s } \qvarst{A} \right) + \qvarst{h } \right) \]

Let $\tilde{\PC} = \{ t_0 < ... < t_N \}$ be a partition such that for every $[t_i,t_{i+1}]$ we have for some $c > 0$. We also introduce a shorthand notation $\dqvar{A}{i} = \norm{ A }_{q-var;[t_i,t_{i+1}]}$. Partition $\tilde{\PC}$ is chosen such that $\dqvar{A}{i} = \inv{2c}$ - note the equality, we specify $c$ later. For any $t_i$, such $t_{i+1}$ always exists by continuity of q-variation. Then we pick $c = 2 C_R C $ and by assumption on size of partition $\tilde{\PC}$ we have for all $[t_i,t_{i+1}]$:

\[ \dqvar{Y}{i} \leq 2C \absv{ Y_{t_i} } \left( 1 + \dqvar{A}{i} \right) + 2 C \dqvar{h}{i} \]

We use the simple triangle inequality for variation seminorms:
\[ \qvart{ Y} \leq  \sum_{i=0}^N \dqvar{Y}{i} \]

to get the bound in terms of breaking points of partition, $Y_{t_i}$.

\begin{equation}\label{eq:qvartsummation}
\begin{aligned}
\qvart{ Y }  \leq & \sum_{\tilde{\PC}} \left( (\dqivar{A} + 1) \absv{ Y_{t_i} } \right) + C \dqivar{h} \\ \leq & \left( \left( 1 + \inv{2c} \right) \sum_{\tilde{\PC}} \left( \absv{ Y_{t_i} } \right) +C \sum_{\tilde{\PC}} \dqivar{h} \right)
\end{aligned}
\end{equation}

It is clear that we obtain for every $j$ the following inequality $$\absv{ Y_{t_j}} \leq \absv{ Y_{t_{j-1}} } + \absv{ \delta_j Y } \leq  \absv{ Y_{t_{j-1}} } (2 + \dqvar{A}{j-1}) + C \dqvar{h}{j-1}$$

For each $t_j, t_{j-k} \in \tilde{\PC}$

\begin{equation}\label{eq:ytj_iteration}
\absv{ Y_{t_j} } \leq \absv{ Y_{t_{0}}} \left(2+\inv{2c} \right)^{j} + C \sum_{l=1}^j (1+c)^{l+1} \dqvar{h}{j-l} 
\end{equation}

We can estimate sum of splits by observing that by H\"older inequality and superadditivity of $\dqvar{A}{j}^q$ we have :

$$\sum_{i=0}^M \dqvar{A}{i} \leq \left( \sum_{i=0}^M 1 \right)^{1/q^*} \left( \sum_{i=0}^M \dqvar{A}{i}^q \right)^{1/q} \leq M^{1-1/q} \qvart{A}$$

combined with (this is where continuity of $t \mapsto \qvart{A}$ is important) $M \inv{2c} =  \sum_{i=0}^M \dqvar{A}{i}  \leq M^{1-1/q} \qvart{A}$ we have: 
\begin{equation}\label{eq:sumtotqvar}
M \leq (2c)^q \qvart{A}^q
\end{equation}

Let us insert \eqref{eq:ytj_iteration}, taking $j = M, k= 0$ into \eqref{eq:qvartsummation}. For the first term we obtain by convexity of $x \mapsto x^q$, geometric sum (first line) and in the second line $ (2+ a )^b = \exp\left( b \ln ( 2+ a) \right) $

\begin{equation}\label{eq:homogest}
\begin{aligned}
  (1+\inv{2c}) \sum_{j=0}^M \left(2+\inv{2c}\right)^j  \leq &  (1+\inv{2c}) \frac{1- (2 + \inv{2c} )^M }{1 - (2+\inv{2c})}  \\ \leq & \exp\left(  M \ln \left( 2 + \inv{2c} \right) \right)  \leq \exp\left( \bar{C} \qvart{A}^q \right)
\end{aligned}
\end{equation}

where in the last inequality we used \eqref{eq:sumtotqvar} and constant $\bar{C} = C q (2c)^{q-1} \ln \left( 2 + \inv{2c} \right)$ Note that $c$ is a fixed constant, given by the initial estimate \eqref{eq:varineq}.

For the second part we observe, adding up terms $\dqvar{h}{l}$ - the term trailing inside the sum in \eqref{eq:qvartsummation}

\begin{align*}
 \sum_{j=0}^M \left( \sum_{l=1}^{j} (1+c)^{(l-1)} \dqvar{h}{l} \right) + \dqvar{h}{j} \leq & M^{1-1/q} \qvart{h} + \sum_{j=0}^M \sum_{l=1}^j (1+c)^{l-1} \dqvar{h}{j} \\
\leq &  M^{1-1/q} \qvart{h} + \inv{c} \qvart{h} \sum_{j=0}^M (1+c)^j
\end{align*}

and we obtain an exponential bound in the same way as in \eqref{eq:homogest}. Summarizing we obtain:

\[ \qvart{Y}  \leq C_1 \exp\left( C_2 \qvart{ A }^q \right) \left( \absv{ Y_0 } + \qvart{ h } \right) \]

with $C_1, C_2$ constants depending only on $q, C_R, C$.

\end{proof}
}

\subsection{Useful estimates on H\"older functions}

\begin{proposition}\label{prop:hoelderdiff}. Let $f \in \ccc{\zeta + \eta}$ for $\zeta, \eta \in (0,1]$. Then:

\begin{equation}
\norm{ f(x+\cdot) - f(\cdot) }_{\ccc{\eta}} \ls \norm{ f}_{\ccc{\eta+\zeta}} \absv{x}^{\zeta} 
\end{equation}

and as a result

$$ \absv{ f(x - a) - f(x - b) - f(y - a) + f(y - b) } \ls \norm{ f}_{\ccc{\zeta+\eta}} \absv{ x- y}^{\zeta} \absv{a-b}^{\eta} $$
\end{proposition}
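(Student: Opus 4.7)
My plan is to prove the first displayed estimate as the main step, and then derive the second one as an immediate corollary by applying the first estimate twice.

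Writing $g(y) := f(y+x) - f(y)$, the first estimate amounts to showing that the finite-difference operator $\Delta_x$ gains a factor $|x|^\zeta$ when passing from $\ccc{\zeta+\eta}$ to $\ccc{\eta}$. I will treat the two regimes $\zeta+\eta \le 1$ and $\zeta+\eta > 1$ separately. In the first regime, $\ccc{\zeta+\eta}$ is a classical Hölder space, and I have two competing bounds
\[
|g(y) - g(y')| \le 2[f]_{\ccc{\zeta+\eta}}\,|y-y'|^{\zeta+\eta}
\quad\text{and}\quad
|g(y) - g(y')| \le 2[f]_{\ccc{\zeta+\eta}}\,|x|^{\zeta+\eta},
\]
obtained by the two obvious pairings of terms. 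Dividing by $|y-y'|^\eta$, the two bounds cross when $|y-y'| = |x|$, at which point both equal $|x|^{\zeta} [f]_{\ccc{\zeta+\eta}}$ up to a constant; taking the minimum pointwise gives $[g]_{\ccc{\eta}} \lesssim [f]_{\ccc{\zeta+\eta}}|x|^\zeta$. The sup-norm part follows by comparing $|g(y)| \le [f]_{\ccc{\zeta+\eta}} |x|^{\zeta+\eta}$ (good for $|x|\le 1$) against $|g(y)|\le 2\|f\|_\infty$ (good for $|x|\ge 1$).

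In the regime $\zeta + \eta > 1$, $\ccc{\zeta+\eta}$ coincides (up to equivalence of norms) with the space of $C^1_b$ functions whose gradient lies in $\ccc{\zeta+\eta-1}$. I will use the representation $g(y) = \int_0^1 \nabla f(y+sx)\cdot x\,ds$, from which one directly reads off the two estimates
\[
|g(y) - g(y')| \le |x|\,[\nabla f]_{\ccc{\zeta+\eta-1}}\,|y-y'|^{\zeta+\eta-1}
\quad\text{and}\quad
|g(y) - g(y')| \le [\nabla f]_{\ccc{\zeta+\eta-1}}\,|x|^{\zeta+\eta-1}\,|y-y'|,
\]
the second one coming from $\nabla g(y) = \nabla f(y+x) - \nabla f(y)$. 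Again dividing by $|y-y'|^\eta$ and balancing at $|y-y'| = |x|$ yields the seminorm bound $[g]_{\ccc{\eta}} \lesssim \|f\|_{\ccc{\zeta+\eta}}\,|x|^\zeta$, while the sup norm is handled as before.

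For the second (mixed) estimate, set $h(z) := f(z-a) - f(z-b)$. Viewing $h$ as a translate of $w\mapsto f(w) - f(w-(b-a))$, translation invariance of $\ccc{\zeta}$-norms together with the first estimate applied \emph{with the roles of $\zeta$ and $\eta$ interchanged} gives
\[
[h]_{\ccc{\zeta}} \;\lesssim\; \norm{f}_{\ccc{\zeta+\eta}}\,|a-b|^\eta.
\]
The left-hand side of the second claim is exactly $|h(x) - h(y)|$, which is bounded by $[h]_{\ccc{\zeta}}|x-y|^\zeta$, and the announced inequality follows.

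The only mildly delicate point is the regime $\zeta+\eta > 1$, where one must invoke the identification of the Besov-Hölder space $\ccc{\zeta+\eta}=B^{\zeta+\eta}_{\infty,\infty}$ with $C^1$ functions with $\ccc{\zeta+\eta-1}$ gradient (with a separate remark for the boundary case $\zeta+\eta = 1$, say by density / interpolation, or by working directly with the Littlewood-Paley definition). Everything else is just the interpolation trick of taking the minimum of two trivial bounds and balancing the scales.
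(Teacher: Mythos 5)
Your proof is correct, and it differs from the paper mainly in the first estimate: the paper does not prove it at all but cites Proposition 3.8 of \cite{BDG19}, whereas you give a self-contained argument via the standard device of playing off two elementary bounds (increment in $y$ versus increment in $x$, or the gradient representation when $\zeta+\eta>1$) and balancing them at the scale $|y-y'|\sim|x|$; this buys a proof that is independent of the external reference, at the modest cost of having to split into the regimes $\zeta+\eta\le 1$ and $\zeta+\eta>1$ and to be careful at integer regularity. For the second estimate your route is essentially the paper's, up to a symmetric exchange of roles: the paper writes the mixed difference as $g_{x-y}(y-a)-g_{x-y}(y-b)$ with $g_h(s)=f(h+s)-f(s)$ and uses the $\ccc{\eta}$-seminorm of $g_{x-y}$ (gaining $|x-y|^\zeta$ from the first bound), while you write it as $h(x)-h(y)$ with $h(z)=f(z-a)-f(z-b)$ and use the $\ccc{\zeta}$-seminorm of $h$ (gaining $|a-b|^\eta$ from the first bound with $\zeta$ and $\eta$ interchanged, which is legitimate since the statement is symmetric in the two exponents); the two derivations are interchangeable. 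One small caveat: your balancing argument naturally produces bounds in the classical H\"older seminorms, whereas the paper's $\ccc{\gamma}$ means $B^\gamma_{\infty,\infty}$; these agree (up to constants) only for non-integer exponents, so the boundary cases $\eta=1$, $\zeta+\eta\in\{1,2\}$ require either the Zygmund-type interpretation or the separate remark you already flag --- but since the paper itself defines $\ccc{\alpha}$ only for $\alpha\notin\NN$, this does not affect how the proposition is used.
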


\begin{proof}

The first bound is \cite{BDG19} Proposition 3.8, shown there in Appendix A.

The second one follows by directly applying the first one - define $g_h(s) = f(h+s) - f(s)$. Then:

\begin{align*}
\absv{  f(x - a) - f(x - b) - f(y - a) + f(y - b) } & = \absv{ g_{x-y}(y-a) - g_{x-y}(y-b)} \\
\ls & \norm{ g_{x-y} }_{\eta} \absv{ a-b}^{\eta} \ls \norm{ f}_{\zeta+\eta} \absv{ x-y}^{\zeta} \absv{a-b}^{\eta}
\end{align*}

\end{proof}

We also have the following heat kernel estimates - their proof can be found, for example in \cite{MW17}, Proposition 5. \bluepassage{The following proposition is proven there for weighted Besov spaces $B^{\alpha,\mu}_{p,q}$, and is therefore also valid for the special case of $p=q=\infty, \mu = 0$. See equation (2.5) in their work for the definition of weights and accompanying parameters and equation (3.6) for the definition of weighted Besov space} 

\begin{proposition}\label{prop:heatkernel}Let $\alpha \geq \gamma \in \RR, f \in B^{\alpha}_{\infty,\infty}$ and $P_t$ be the heat kernel. Then:

\begin{equation}
\norm{ P_t f }_{B^{\alpha}_{\infty,\infty}} \lesssim t^{-(\alpha-\gamma)/2} \norm{ f}_{B^{\gamma}_{\infty,\infty}} 
\end{equation}

\begin{equation}
\norm{ P_t \nabla^k f}_{B^{\alpha}_{\infty,\infty}} \lesssim t^{-(\alpha - \gamma + k)/2} \norm{f}_{B^{\gamma}_{\infty,\infty}}
\end{equation}

\end{proposition}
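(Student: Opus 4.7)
Since the proposition is cited from \cite{MW17} and is a classical Schauder-type smoothing estimate, I would give a short self-contained proof based on Littlewood--Paley decomposition. Recall that $\norm{g}_{B^{\alpha}_{\infty,\infty}} = \sup_{j\geq -1} 2^{j\alpha} \norm{\Delta_j g}_{L^\infty}$, where $\Delta_j$ are the Littlewood--Paley projectors, each supported in an annulus $|\xi|\sim 2^j$ in Fourier space (with $\Delta_{-1}$ supported near the origin).

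The first step is to observe that the heat semigroup commutes with $\Delta_j$ because it is a Fourier multiplier, so $\Delta_j P_t f = P_t \Delta_j f$. Using the spectral localization of $\Delta_j f$ together with a standard Bernstein-type lemma applied to the smooth bump approximation of $e^{-t|\xi|^2}$ on the annulus $|\xi|\sim 2^j$, one obtains the key pointwise bound
\[
\norm{P_t \Delta_j f}_{L^\infty} \lesssim e^{-c t 2^{2j}} \norm{\Delta_j f}_{L^\infty},
\]
valid uniformly in $j \geq 0$ (and trivially for $j=-1$ since $P_t$ is a contraction on $L^\infty$).

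Taking the supremum and multiplying by $2^{j\alpha}$ then gives
\[
\norm{P_t f}_{B^{\alpha}_{\infty,\infty}} \lesssim \sup_{j} 2^{j(\alpha-\gamma)} e^{-c t 2^{2j}} \cdot 2^{j\gamma} \norm{\Delta_j f}_{L^\infty} \leq \norm{f}_{B^{\gamma}_{\infty,\infty}} \sup_{x \geq 1} x^{\alpha-\gamma} e^{-c t x^2}.
\]
The change of variable $y = t^{1/2} x$ transforms the last supremum into $t^{-(\alpha-\gamma)/2} \sup_{y \geq 0} y^{\alpha-\gamma} e^{-cy^2}$, and since $\alpha \geq \gamma$ the remaining supremum is a finite constant. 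This yields the first inequality. For the second estimate, the same argument applies after noting that Bernstein's inequality also gives $\norm{\nabla^k \Delta_j f}_{L^\infty} \lesssim 2^{jk} \norm{\Delta_j f}_{L^\infty}$, which produces the additional factor $t^{-k/2}$ after optimization.

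There is no substantive obstacle here; the only point requiring a bit of care is the low-frequency block $\Delta_{-1}$, where the exponential decay is absent, but it is immediately handled by the uniform $L^\infty$ boundedness of $P_t$ together with the equivalence of the inhomogeneous Besov norm. As the result is a standard one, in the final write-up I would most likely simply cite \cite{MW17} (as the authors do) rather than reproduce the argument.
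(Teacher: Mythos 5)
Your argument is correct, and it is the standard Littlewood--Paley proof of this Schauder-type estimate: commuting $P_t$ with the blocks $\Delta_j$, using the kernel bound $\norm{P_t\Delta_j f}_{L^\infty}\lesssim e^{-ct2^{2j}}\norm{\Delta_j f}_{L^\infty}$ for $j\geq 0$ together with Bernstein's inequality for the gradient factor, and optimizing in $j$. The paper itself does not reprove the proposition: it simply cites Proposition 5 of \cite{MW17}, which is stated there for weighted Besov spaces $B^{\alpha,\mu}_{p,q}$ and specializes to the case $p=q=\infty$, $\mu=0$ used here — so your final remark that you would cite \cite{MW17} matches exactly what the authors do, and your write-up supplies the elementary argument behind that citation. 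One small point worth making explicit: as you note, the block $\Delta_{-1}$ carries no exponential decay, so the $L^\infty$-contraction of $P_t$ only yields $2^{-\alpha}\norm{\Delta_{-1}P_tf}_{L^\infty}\lesssim\norm{f}_{B^{\gamma}_{\infty,\infty}}$, i.e. a constant rather than $t^{-(\alpha-\gamma)/2}$; hence the stated inequality holds with a constant depending on the time horizon (uniformly for $t$ in a bounded interval), not uniformly in $t>0$ when $\alpha>\gamma$. This is harmless for the paper, where the estimate is only applied with $t=\sigma^2(s,r)\lesssim T^{2H}$ bounded, but it deserves a sentence if you keep the self-contained proof.
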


\bibliography{main}

\begin{thebibliography}{CMRdF22}

\bibitem[ABLM21]{ABLM21}
Siva Athreya, Oleg Butkovsky, Khoa L{\^e}, and Leonid Mytnik.
\newblock Well-posedness of stochastic heat equation with distributional drift
  and skew stochastic heat equation.
\newblock {\em arXiv preprint arXiv:2011.13498}, 2021.

\bibitem[Aid15]{Aid15}
Shigeki Aida.
\newblock Reflected rough differential equations.
\newblock {\em Stochastic Processes and their Applications}, 125(9):3570--3595,
  2015.

\bibitem[ART21]{ART21}
Lukas Anzeletti, Alexandre Richard, and Etienne Tanr{\'e}.
\newblock Regularisation by fractional noise for one-dimensional differential
  equations with nonnegative distributional drift.
\newblock {\em arXiv preprint arXiv:2112.05685}, 2021.

\bibitem[BCD11]{BCD11}
Hajer Bahouri, Jean-Yves Chemin, and Rapha{\"e}l Danchin.
\newblock {\em Fourier analysis and nonlinear partial differential equations},
  volume 343.
\newblock Springer, 2011.

\bibitem[BDG21]{BDG19}
Oleg Butkovsky, Konstantinos Dareiotis, and M\'{a}t\'{e} Gerencs\'{e}r.
\newblock Approximation of {SDE}s: a stochastic sewing approach.
\newblock {\em Probab. Theory Related Fields}, 181(4):975--1034, 2021.

\bibitem[BHO09]{BHO09}
R.~Belfadli, S.~Hamad\`ene, and Y.~Ouknine.
\newblock On one-dimensional stochastic differential equations involving the
  maximum process.
\newblock {\em Stoch. Dyn.}, 9(2):277--292, 2009.

\bibitem[CD99]{CD99}
L.~Chaumont and R.~A. Doney.
\newblock Pathwise uniqueness for perturbed versions of {B}rownian motion and
  reflected {B}rownian motion.
\newblock {\em Probab. Theory Related Fields}, 113(4):519--534, 1999.

\bibitem[CDR22]{CDR22}
Laure Coutin, Romain Duboscq, and Anthony R\'{e}veillac.
\newblock The {I}t\^{o}-{T}anaka trick: a non-semimartingale approach.
\newblock {\em ALEA Lat. Am. J. Probab. Math. Stat.}, 19(1):881--924, 2022.

\bibitem[CG16]{CG16}
R.~Catellier and M.~Gubinelli.
\newblock Averaging along irregular curves and regularisation of {ODE}s.
\newblock {\em Stochastic Process. Appl.}, 126(8):2323--2366, 2016.

\bibitem[CMRdF22]{CMR22}
Charles Castaing, Nicolas Marie, and Paul Raynaud~de Fitte.
\newblock Sweeping processes perturbed by rough signals.
\newblock In {\em S{\'e}minaire de Probabilit{\'e}s LI}, pages 303--339.
  Springer, 2022.

\bibitem[CPY98]{CPY98}
Philippe Carmona, Fr\'{e}d\'{e}rique Petit, and Marc Yor.
\newblock Beta variables as times spent in {$[0,\infty[$} by certain perturbed
  {B}rownian motions.
\newblock {\em J. London Math. Soc. (2)}, 58(1):239--256, 1998.

\bibitem[Dav96]{Dav96}
Burgess Davis.
\newblock Weak limits of perturbed random walks and the equation
  {$Y_t=B_t+\alpha\sup\{Y_s\colon\ s\leq t\}+\beta\inf\{Y_s\colon\ s\leq t\}$}.
\newblock {\em Ann. Probab.}, 24(4):2007--2023, 1996.

\bibitem[Dav07]{Dav07}
Alexander~M Davie.
\newblock Uniqueness of solutions of stochastic differential equations.
\newblock {\em International Mathematics Research Notices}, 2007, 2007.

\bibitem[Dec05]{Dec05}
L.~Decreusefond.
\newblock Stochastic integration with respect to {V}olterra processes.
\newblock {\em Ann. Inst. H. Poincar\'{e} Probab. Statist.}, 41(2):123--149,
  2005.

\bibitem[DGHT19]{DGHT19}
Aur{\'e}lien Deya, Massimiliano Gubinelli, Martina Hofmanov{\'a}, and Samy
  Tindel.
\newblock One-dimensional reflected rough differential equations.
\newblock {\em Stochastic Processes and their Applications}, 129(9):3261--3281,
  2019.

\bibitem[DZ05]{DZ05}
R.~A. Doney and T.~Zhang.
\newblock Perturbed {S}korohod equations and perturbed reflected diffusion
  processes.
\newblock {\em Ann. Inst. H. Poincar\'{e} Probab. Statist.}, 41(1):107--121,
  2005.

\bibitem[FGGR16]{FGGR16}
Peter~K Friz, Benjamin Gess, Archil Gulisashvili, and Sebastian Riedel.
\newblock The jain--monrad criterion for rough paths and applications to random
  fourier series and non-markovian h{\"o}rmander theory.
\newblock {\em The Annals of Probability}, 44(1):684--738, 2016.

\bibitem[FH20]{FH20}
Peter~K. Friz and Martin Hairer.
\newblock {\em A course on rough paths}.
\newblock Universitext. Springer, Cham, [2020] \copyright 2020.
\newblock With an introduction to regularity structures, Second edition of [
  3289027].

\bibitem[Fla11]{Fla11}
Franco Flandoli.
\newblock {\em Random Perturbation of PDEs and Fluid Dynamic Models: {\'E}cole
  d'{\'E}t{\'e} de Probabilit{\'e}s de Saint-Flour XL -- 2010}, volume 2015.
\newblock 01 2011.

\bibitem[FP18]{FP18}
Peter~K. Friz and David~J. Pr\"{o}mel.
\newblock Rough path metrics on a {B}esov-{N}ikolskii-type scale.
\newblock {\em Trans. Amer. Math. Soc.}, 370(12):8521--8550, 2018.

\bibitem[FS15]{FS15}
Adrian Falkowski and Leszek S\l{}omi\'{n}ski.
\newblock Stochastic differential equations with constraints driven by
  processes with bounded {$p$}-variation.
\newblock {\em Probab. Math. Statist.}, 35(2):343--365, 2015.

\bibitem[FS22]{FS22}
Adrian Falkowski and Leszek S\l{}omi\'{n}ski.
\newblock S{DE}s with two reflecting barriers driven by semimartingales and
  processes with bounded {$p$}-variation.
\newblock {\em Stochastic Process. Appl.}, 146:164--186, 2022.

\bibitem[Gas21]{Gas21}
Paul Gassiat.
\newblock Non-uniqueness for reflected rough differential equations.
\newblock In {\em Annales de l'Institut Henri Poincar{\'e}, Probabilit{\'e}s et
  Statistiques}, volume~57, pages 1369--1387. Institut Henri Poincar{\'e},
  2021.

\bibitem[Ger22]{Ger22}
M{\'a}t{\'e} Gerencs{\'e}r.
\newblock Regularisation by regular noise.
\newblock {\em Stochastics and Partial Differential Equations: Analysis and
  Computations}, pages 1--16, 2022.

\bibitem[GG22]{GG20}
Lucio Galeati and Massimiliano Gubinelli.
\newblock Noiseless regularisation by noise.
\newblock {\em Rev. Mat. Iberoam.}, 38(2):433--502, 2022.

\bibitem[GHM22]{GHM21}
Lucio Galeati, Fabian~A. Harang, and Avi Mayorcas.
\newblock Distribution dependent sdes driven by additive fractional brownian
  motion.
\newblock {\em Probability Theory and Related Fields}, 2022.

\bibitem[HP21]{HP20}
Fabian~Andsem Harang and Nicolas Perkowski.
\newblock {$C^\infty$}-regularization of {ODE}s perturbed by noise.
\newblock {\em Stoch. Dyn.}, 21(8):Paper No. 2140010, 29, 2021.

\bibitem[KLRS07]{KLRS07}
Lukasz Kruk, John Lehoczky, Kavita Ramanan, and Steven Shreve.
\newblock An explicit formula for the skorokhod map on [0, a].
\newblock {\em The Annals of Probability}, 35(5):1740--1768, 2007.

\bibitem[L\^20]{Le20}
Khoa L\^{e}.
\newblock A stochastic sewing lemma and applications.
\newblock {\em Electron. J. Probab.}, 25:Paper No. 38, 55, 2020.

\bibitem[LS84]{LS84}
P.-L. Lions and A.-S. Sznitman.
\newblock Stochastic differential equations with reflecting boundary
  conditions.
\newblock {\em Comm. Pure Appl. Math.}, 37(4):511--537, 1984.

\bibitem[MW17]{MW17}
Jean-Christophe Mourrat and Hendrik Weber.
\newblock {Global well-posedness of the dynamic $\Phi^{4}$ model in the plane}.
\newblock {\em The Annals of Probability}, 45(4):2398 -- 2476, 2017.

\bibitem[NO02]{NO02}
David Nualart and Youssef Ouknine.
\newblock Regularization of differential equations by fractional noise.
\newblock {\em Stochastic Process. Appl.}, 102(1):103--116, 2002.

\bibitem[Nua06]{Nua06intro}
David Nualart.
\newblock {\em The Malliavin Calculus and Related Topics}.
\newblock Springer, 01 2006.

\bibitem[Pic11]{Pic10}
Jean Picard.
\newblock Representation formulae for the fractional {B}rownian motion.
\newblock In {\em S\'{e}minaire de {P}robabilit\'{e}s {XLIII}}, volume 2006 of
  {\em Lecture Notes in Math.}, pages 3--70. Springer, Berlin, 2011.

\bibitem[Sko61]{Sko61}
Anatoliy~V Skorokhod.
\newblock Stochastic equations for diffusion processes in a bounded region.
\newblock {\em Theory of Probability \& Its Applications}, 6(3):264--274, 1961.

\bibitem[SW22]{SW22}
Alexander Shaposhnikov and Lukas Wresch.
\newblock Pathwise vs. path-by-path uniqueness.
\newblock In {\em Annales de l'Institut Henri Poincare (B) Probabilites et
  statistiques}, volume~58, pages 1640--1649. Institut Henri Poincar{\'e},
  2022.

\bibitem[Tal14]{Tal14}
M.~Talagrand.
\newblock {\em Upper and Lower Bounds for Stochastic Processes: Modern Methods
  and Classical Problems}.
\newblock Ergebnisse der Mathematik und ihrer Grenzgebiete. 3. Folge / A Series
  of Modern Surveys in Mathematics. Springer Berlin Heidelberg, 2014.

\bibitem[Tan79]{Tan79}
Hiroshi Tanaka.
\newblock Stochastic differential equations with reflecting boundary condition
  in convex regions.
\newblock {\em Hiroshima Math. J.}, 9(1):163--177, 1979.

\bibitem[Ver81]{Ver81}
Alexander~Ju Veretennikov.
\newblock On strong solutions and explicit formulas for solutions of stochastic
  integral equations.
\newblock {\em Mathematics of the USSR-Sbornik}, 39(3):387, 1981.

\bibitem[Ver11]{Ver11Fubini}
Mark Veraar.
\newblock The stochastic fubini theorem revisited.
\newblock {\em Stochastics An International Journal of Probability and
  Stochastic Processes}, 84:1--9, 01 2011.

\bibitem[YZ15]{YZ15}
Wen Yue and Tusheng Zhang.
\newblock Absolute continuity of the laws of perturbed diffusion processes and
  perturbed reflected diffusion processes.
\newblock {\em Journal of Theoretical Probability}, 28(2):587--618, 2015.

\bibitem[YZ20]{YZ20}
Saisai Yang and Tusheng Zhang.
\newblock Strong solutions to reflecting stochastic differential equations with
  singular drift.
\newblock {\em arXiv preprint arXiv:2002.12150}, 2020.

\bibitem[Z\"98]{Zah98}
M.~Z\"{a}hle.
\newblock Integration with respect to fractal functions and stochastic
  calculus. {I}.
\newblock {\em Probab. Theory Related Fields}, 111(3):333--374, 1998.

\bibitem[Zha94]{Zha94}
Tu-Sheng Zhang.
\newblock On the strong solutions of one-dimensional stochastic differential
  equations with reflecting boundary.
\newblock {\em Stochastic processes and their applications}, 50(1):135--147,
  1994.

\bibitem[Zvo74]{Zvo74}
Alexander~K Zvonkin.
\newblock A transformation of the phase space of a diffusion process that
  removes the drift.
\newblock {\em Mathematics of the USSR-Sbornik}, 22(1):129, 1974.

\end{thebibliography}
\bibliographystyle{alpha}
\end{document}